\documentclass[11pt,twoside,a4paper]{article}
\usepackage{float}
\usepackage[utf8]{inputenc}
\usepackage{import}
\usepackage{amsmath}
\usepackage{amsfonts}
\usepackage{amssymb}
\usepackage{amsthm}
\usepackage{centernot}
\usepackage{graphicx}
\usepackage{float}
\usepackage{calligra}
\usepackage{url}
\usepackage{indentfirst}
\usepackage{enumerate}
\usepackage{pdfpages}
\usepackage[lmargin=1.4cm,tmargin=1.7cm,bmargin=1.3cm,rmargin=1.4cm]
{geometry}
\usepackage[colorlinks=true,
	bookmarksnumbered=true,
	bookmarksopen=true,
	bookmarksopenlevel=3,
	pdfstartview=FitH,
	linkcolor=cyan,
	pdfmenubar=true,
	pdftoolbar=true,
	bookmarks=true,
	citecolor=cyan,
	urlcolor=magenta,
	filecolor=cyan,
	menucolor=black,
	plainpages=false,
	pdfpagelabels]{hyperref}
\usepackage{fancyhdr}

\newtheorem{df}{Definition}[section]

\newtheorem{thr}{Theorem}[section]
\newtheorem*{thr*}{Theorem}

\newtheorem{lem}{Lemma}[section]
\newtheorem{rem}{Remark}[section]

\def\fin { \vskip 0pt \hfill $\blacksquare$ \vskip 12pt}
\numberwithin{equation}{section}
\newcommand{\func}[1]{\operatorname{#1}}

\begin{document}

\title{ Well-posedness for the Navier-Stokes equations in \\
Morrey spaces on non-compact manifolds }
\author{\textbf{V\'{\i}ctor Chaves-Santos $^{1}$}, \textbf{Lucas C. F.
Ferreira $^{2}$}{\thanks{Email addresses: victor.ch.sant@gmail.com (VCS), lcff@ime.unicamp.br (LCFF, corresponding author).}} \\
{\small $^{1, \, 2}$ State University of Campinas (Unicamp),
IMECC-Department of Mathematics,} \\
{\small Rua S\'{e}rgio Buarque de Holanda, 651, CEP 13083-859, Campinas, SP,
Brazil.} }
\date{}
\maketitle

\begin{abstract}
We analyze the incompressible Navier-Stokes equations on a class of non-compact Riemannian manifolds within the framework of Morrey spaces.
Assuming bounded geometry together with negative Ricci and sectional curvature (e.g., hyperbolic spaces), we establish dispersive and smoothing
estimates for the heat semigroups associated with the Beltrami, Bochner and Hodge Laplacians in Morrey spaces, as well as for the Riesz transform. In particular, the presence of negative curvature yields improved large-time decay compared to the Euclidean setting. These estimates are of independent interest and enable us to construct solutions in time-weighted spaces of Kato type, leading to local-in-time well-posedness on a broad class of non-compact manifolds and global one in the case of Einstein manifolds. In the latter setting, we assume a smallness condition on the initial data in Morrey norms, which are weaker than $L^{p}$-norms and thus allow for certain classes of large $L^{p}$-data. We also discuss extensions to Ricci-flat manifolds. Our results introduce a new class of non-decaying and rough initial data for the Navier-Stokes equations on manifolds, extending previous works in Lebesgue and Sobolev spaces.

{\small \medskip\bigskip\noindent\textbf{AMS MSC:} 35R01; 76D05; 76D03;
58J35; 46E30; 42B35}

{\small \medskip\noindent\textbf{Keywords:} Navier-Stokes equations;
Non-compact Riemannian manifolds; Well-posedness; Dispersive and smoothing
estimates; Einstein manifolds; Hyperbolic space; Morrey spaces}
\end{abstract}

\tableofcontents


\section{Introduction}

We are concerned with the incompressible Navier-Stokes system on smooth,
complete, simply connected, non-compact Riemannian manifolds $M$ without
boundary, of dimension $m\geq 3.$ This system of equations describes the
motion of a viscous incompressible fluid in $M$ and, as explained by
Ebin-Marsden \cite{ebin} and Chang-Czubak-Disconzi \cite{chan}, its
appropriate formulation on manifolds is given by

\begin{equation}
\begin{cases}
\partial _{t}u+\nabla _{u}u+\dfrac{1}{\rho }\func{grad}\,p=\mathcal{L}_{\nu
}(u), \\
\func{div}(u)=0, \\
u(0,\cdot )=u_{0},%
\end{cases}
\label{ns1}
\end{equation}%
where%
\begin{equation*}
\mathcal{L}_{\nu }(u)=\nu \left( \overrightarrow{\Delta }u+r(u)\right) \text{
and }r(u)=\func{Ric}(u,\cdot )^{\sharp }.
\end{equation*}%
Here $u(t,\cdot )\in \Gamma (TM)$ denotes the velocity field, the function $%
p(t,\cdot )$ is the scalar pressure, and the constant $\rho >0$ represents
the fluid density, where $\Gamma (TM)$ is the space of vector fields on $M$.
The operator $\overrightarrow{\Delta }u=-\nabla ^{\ast }\nabla u$ stands for
the Bochner Laplacian. It is related to the Hodge Laplacian $\Delta
_{H}=-(dd^{\ast }+d^{\ast }d)$, where $d$ denotes the exterior derivative
and $d^{\ast }$ its formal adjoint; more precisely, one has $\overrightarrow{%
\Delta }u=\Delta _{H}+r(u)$ (Weitzenb\"{o}ck identity, see \eqref{weit}). A
complete derivation can be found in \cite{Taylor}. As usual, for a $1$-form $%
\omega $, we denote the vector field $\omega ^{\sharp }$ such that $g(\omega
^{\sharp },Y)=\omega (Y)$, for all $Y\in \Gamma (TM)$. Moreover, the symbol $%
\nabla $ denotes the covariant derivative, and for $u\in \Gamma \left(
TM\right) $ we have that $|u|=\sqrt{g(u,u)}$.

Following the approach in Lebesgue spaces by \cite{Pierfelice2017}, we apply
the divergence operator to the system above. Using the Weitzenb\"{o}ck
identity \eqref{weit} and the divergence free condition, we obtain $\func{div%
}(\overrightarrow{\Delta }u)=\func{div}(r(u))$ leading to the expression
\begin{equation*}
\frac{1}{\rho }\func{grad}\,p=\func{grad}(-\Delta _{g})^{-1}\func{div}\left(
\nabla _{u}u\right) -2\nu \,\func{grad}(-\Delta _{g})^{-1}\func{div}(r(u)),
\end{equation*}%
where $\Delta _{g}=\func{div}(\func{grad}\,p)$ is the so-called
Laplace-Beltrami operator (Beltrami Laplacian). Accordingly, we work with
the Navier-Stokes system in the following form
\begin{equation}
\begin{cases}
\partial _{t}u-\nu \left( \overrightarrow{\Delta }u+r(u)-B(u)\right) =-%
\mathbb{P}(\nabla _{u}u), \\
\func{div}(u)=0, \\
u(0,\cdot )=u_{0},%
\end{cases}
\label{ns-2}
\end{equation}%
where
\begin{equation*}
B(u)=-2\,\func{grad}(-\Delta _{g})^{-1}\func{div}(r(u)),\text{ }\mathbb{P}=I+%
\func{grad}(-\Delta _{g})^{-1}\func{div},\text{ and }\nabla _{u}u=\func{div}%
(u\otimes u).
\end{equation*}

The geometric setting characterized by bounded geometry, negative Ricci
curvature, and negative sectional curvature offers a rich framework for
analysis and differential geometry, in particular covering Einstein
manifolds with negative curvature and hyperbolic spaces. For example,
Pierfelice \cite{Pierfelice2017} developed heat kernel estimates and
established the well-posedness of the Navier-Stokes equations in $L^{p}$%
-spaces by means of energy-type methods. Related results in Lebesgue spaces
can be also found in \cite{NGUYEN2022}, \cite{Nguyen2022}. For results in
the case $m=2$, we refer to the works \cite{Pierfelice2017}, \cite{Czubak},
and the references therein.

On the other hand, the incompressible Navier-Stokes equations have been
extensively studied in the framework of Morrey spaces on the Euclidean
setting. Results on existence, uniqueness, and regularity of solutions in
this context can be found, for instance, in \cite{Giga1},\cite{Kato1992},%
\cite{Taylor1992}. Extensions of these results to compact manifolds were
also obtained in \cite{Taylor1992}, where, compared to the non-compact case,
the influence of geometric effects enters the analysis in a more controlled
manner thanks to the compactness property. Moreover, Mitrea and Taylor \cite%
{Mitrea2001} analyzed the steady Navier-Stokes equations on connected
Lipschitz domains in smooth compact Riemannian manifolds, relying on the
framework of Besov spaces.

The theory of Lebesgue and Sobolev spaces on manifolds can, in many
respects, be developed by adapting the corresponding theory in the Euclidean
setting (see, e.g., \cite{Hebey1996}, \cite{Hebey2000}). In this spirit, and
motivated by the Euclidean definition, Morrey spaces can also be naturally
defined on a non-compact manifold $(M,g)$, see Definition \ref{morrey}.
Here, we focus primarily on two such extensions: the formulation based on
geodesic balls, denoted by $\mathcal{M}_{p,\lambda }^{g}$, and a simplified
radius-based version, denoted by $\mathcal{M}_{p,\lambda }$. Adams \cite{ad}
studied aspects of the analytic theory of Morrey spaces in the Euclidean
setting $\mathbb{R}^{m}$ and discussed their extension to manifolds via the
space $\mathcal{M}_{p,\lambda }^{g}$. Some basic properties of the Morrey
spaces $\mathcal{M}_{p,\lambda }^{g}$ and $\mathcal{M}_{p,\lambda }$
resemble those of their Euclidean counterparts (see \cite{ad},\cite{Giga1},%
\cite{Kato1992},\cite{Taylor1992}).

Our intent is to consider the Navier-Stokes equations on non-compact
Riemannian manifolds within the framework of Morrey spaces, thereby
providing a new setting to the problem in non-compact geometries of
dimensions $m\geq 3$. In this framework, and considering bounded geometry
together with negative Ricci and sectional curvature, we establish both
local and global-in-time well-posedness results for initial data $u_{0}\in
\mathcal{M}_{p,\lambda }^{g}$ (see Theorems \ref{th-local} and \ref{th-ein}%
), introducing a novel class of rough initial data for the evolution of (\ref%
{ns-2}). By employing time-weighted spaces (\textit{\`{a} la Kato}) and a
contraction argument, we obtain solutions in the class $\mathcal{C}%
_{b}((0,T),\mathcal{M}_{p,\lambda }^{g})$ $\cap X_{T},$ where the
time-weighted space $X_{T}$ is defined by
\begin{equation*}
X_{T}=\left\{ u\in L_{loc}^{\infty }((0,T),\mathcal{M}_{q,\lambda
}^{g}(\Gamma (TM)))\,;\,\left[ d(t)^{\frac{m}{2}}t^{-\frac{\lambda }{2}}%
\right] ^{\left( \frac{1}{p}-\frac{1}{q}\right) }e^{\beta t}\Vert u(t)\Vert
_{g,q,\lambda }\in L^{\infty }(0,T)\right\} .
\end{equation*}%
Here $\mathcal{C}_{b}$ denotes the space of bounded and continuous
functions, $m-\lambda \leq p<q$, $d(t)=\min \{1,t\}$, and $\beta $ is chosen
in accordance with the large-time decay rate appearing in the smoothing
estimate (see Remark \ref{r-sm}). The existence time $T>0$ may be finite or
infinite, depending on whether the result is local or global in time. In
fact, depending on the conditions on the geometric setting or the equations,
we consider slight variants of the space $X_{T}$ defined above.

Morrey spaces lack global $L^{p}$-integrability and contain elements
exhibiting infinitely many singularities and non-decaying behavior at
infinity (i.e., nondecaying data), thereby accommodating a wider class of
functions. Originally introduced to study the local behavior of solutions to
elliptic PDEs, these spaces provide a natural setting for analyzing
singularities, concentration phenomena, partial regularity of weak
solutions, and the well-posedness of nonlinear equations at critical
regularity. They can describe singularities that are not detectable in
standard Lebesgue spaces. For example, considering the geometric setting $%
\func{Ric}\geq -K(m-1)$ with $K>0$ and denoting by $r(x,y)$ the geodesic
distance from $x$ to $y$ in $M$, functions exhibiting critical local
singularities of the form $r(\cdot ,y)^{-\eta }e^{-kr(\cdot ,y)}$, where $%
\eta =(m-\lambda )/p$ and $k=\tfrac{1}{p}(m-1)\sqrt{K}$, belong to Morrey
spaces $\mathcal{M}_{p,\lambda }^{g}$ and $\mathcal{M}_{p,\lambda }$ for $%
p\in \lbrack 1,\infty )$ and $\lambda \in (0,m),$ and for each fixed $y\in M$
(see Lemma \ref{ex-2a}). This contrasts with the Lebesgue spaces $L^{p}$,
for example on Hyperbolic space, which fail to contain some of these
functions (see Lemma \ref{ex-2a} and Remark \ref{rem-e}). Moreover, there is
a class of nondecaying data that belongs to Morrey spaces but not to
Lebesgue spaces $L^{p}$ for $p\neq \infty $. In fact, generally speaking,
Morrey spaces strictly contain both $L^{p}$ and weak-$L^{p}$ spaces,
extending the Lebesgue scale by measuring $L^{p}$-integrability in a
localized, controlled, and scale-sensitive manner, and thus capturing fine
local structure while allowing global non-integrability in $L^{p}$ (see
Remark \ref{rem-e} and Remark \ref{rem-e-2}). In view of the strict
inclusion, it is worth highlighting that smallness conditions in the weaker
norm of Morrey spaces allow one to consider some classes of large data in $%
L^{p}$-spaces.

In the case of Einstein manifolds ($Bu=0$), we obtain a global
well-posedness result with a smallness condition on the initial-data norm
(see Theorem \ref{th-ein}). Additionally, we also discuss a modified
Navier-Stokes system influenced by viscosity (see Section \ref{s.visc}) and
prove a global well-posedness result for more general non-compact manifolds
(see Theorem \ref{th-visc}). By making an analogy with the results in $%
\mathbb{R}^{n}$, note that these results ensure global well-posedness in the
critical ($\mathcal{M}_{m-\lambda ,\lambda }^{g}$) and subcritical ($%
\mathcal{M}_{p,\lambda }^{g}$, with $p>m-\lambda $) cases, due to the
exponential decay and the geometric context of our setting. However, the
formal definition of critical spaces for the Navier-Stokes equations in
general manifolds seems more complex and less clear.

In order to reach our aims, we need to develop dispersive and smoothing
estimates for the heat kernel in our setting. The analysis requires a
treatment based on fundamental solutions (Green functions) and pointwise
estimates of the heat kernel associated with the corresponding Laplacian
operator. One motivation for this approach is that energy-type estimates do
not appear to be directly effective in the framework of Morrey spaces. Here,
we provide estimates for the semigroups generated by the Beltrami, Bochner,
and Hodge Laplacians in the context of Morrey spaces, which are of
independent interest and may have broader applications to other problems in
analysis and partial differential equations.

It is known that the fundamental solution $G(t,x,y)$ can be construct (see,
e.g., \cite{dodziuk}, \cite{Buttig}) so that the solution of the heat
equation with initial data $u_{0}$ is given by
\begin{equation}  \label{green}
u(t,x)=%
\begin{cases}
\displaystyle\int_{M}G(t,x,y)u_{0}(y)\,dy,\hspace{0.2cm} & \text{for}\hspace{%
0.2cm}t>0; \\
u_{0}(x),\hspace{0.2cm} & \text{for}\hspace{0.2cm}t=0.%
\end{cases}%
\end{equation}%
Pointwise estimates for $G(t,x,y)$ are well established in the literature.
For instance, we refer to the results of Davies \cite{davies} (see %
\eqref{g-davies}), Grigor'yan \cite{Grigoryan1994} (see \eqref{es-h3}), and
Buttig and Eichhorn \cite{Buttig} (see \eqref{es-ho}). Moreover, we consider
some volume estimates from differential geometry for manifolds with bounded
Ricci curvature, and additional assumptions such as bounded geometry or
sectional curvature. Indeed, the condition $\func{Ric}\geq -K(m-1)$ yields,
via Bishop's comparison theorem (see \cite{chavel}, \cite{schoen}), a volume
comparison with geodesic balls in spaces of constant curvature, namely $%
|B_{x}(R)|\leq |B^{-K}(R)|$; in this way we obtain a upper bound of the
geodesic ball volume (see Lemma \ref{lem-v}). For lower volume bounds, a
similar result holds under the sectional curvature assumption $\kappa \leq
\delta $ (see \eqref{l-ball} and \eqref{e-vol}).

In deriving bounds for the heat semigroup in Morrey spaces, we adapt some
arguments found in \cite{Kato1992} and \cite{Pierfelice2017}. Extending
these arguments, however, is not straightforward and requires a careful
treatment of the underlying geometric conditions in combination with
Morrey-type norms. For that, we first establish the boundedness of $%
e^{t\Delta _{g}}:\mathcal{M}_{p,\lambda }^{g}\rightarrow L^{\infty }$ (see
Theorem \ref{th-mi}) and $e^{t\Delta _{g}}:\mathcal{M}_{p,\lambda
}^{g}\rightarrow \mathcal{M}_{p,\lambda }^{g}$ (see Theorem \ref{th-mm}), by
analyzing in stages the cases $0<R\leq 1$ and $R>1$ where $R$ denotes the
radius of the ball (see Definition \ref{morrey}). As a consequence of these
results, we obtain the boundedness of $e^{t\Delta _{g}}:\mathcal{M}%
_{p,\lambda }^{g}\rightarrow \mathcal{M}_{q,\lambda }^{g}$, for $1\leq p\leq
q<\infty $ (see Theorem \ref{th-d}). Next, for the heat semigroup associated
with the Bochner Laplacian, and under the assumption of negative Ricci
curvature, we treat the case $e^{t(\overrightarrow{\Delta }+r)}:\mathcal{M}%
_{p,\lambda }^{g}\rightarrow \mathcal{M}_{q,\lambda }^{g}$, for $1\leq p\leq
q<\infty $ (see Theorem \ref{th-cd}). For smoothing (gradient) estimates, we
establish the boundedness of $\nabla e^{t\Delta _{g}}:\mathcal{M}_{p,\lambda
}^{g}\rightarrow \mathcal{M}_{q,\lambda }^{g}$ (see Theorem \ref{es-s}) and $%
\nabla e^{t(\overrightarrow{\Delta }+r)}:\mathcal{M}_{p,\lambda
}^{g}\rightarrow \mathcal{M}_{q,\lambda }^{g}$, for $1\leq p\leq q<\infty $
(see Theorem \ref{th-s-ric}). Moreover, corresponding semigroup estimates
hold in the Morrey space $\mathcal{M}_{p,\lambda }$ (see Figure \ref{fig1}).
Notably, the presence of negative curvature leads to improved large-time
decay compared to the Euclidean case; namely, exponential rather than
polynomial decay. Finally, we address the operators $B$ and $\mathbb{P}$ in (%
\ref{ns-2}), as discussed in Theorem \ref{th-ri}, where we establishes the
boundedness of the Riesz transform $\nabla (-\Delta _{g})^{-1/2}$ in Morrey
spaces.

\begin{figure}[]
\centering  \includegraphics[height=10.2cm]{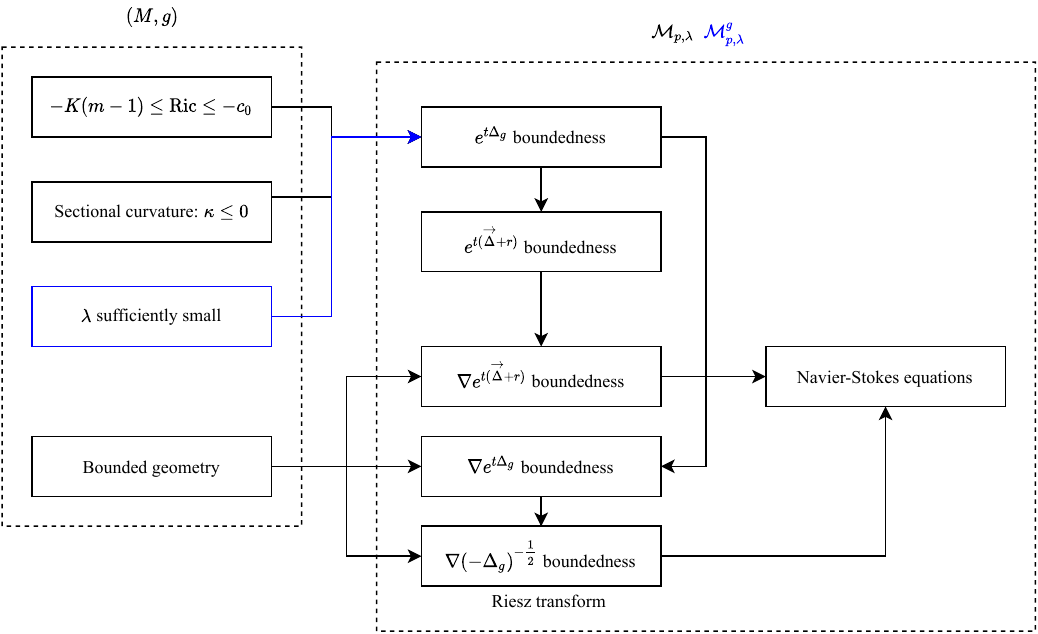} .
\caption{The blue arrow indicates the extra assumption for the case $%
\mathcal{M}^g_{p,\protect\lambda}$}
\label{fig1}
\end{figure}

A few remarks are in order regarding the case of non-negative Ricci
curvature, namely $\func{Ric}\geq 0$. In principle, one may also study
equations \eqref{ns1} under this condition. However, when the inequality is
strict, there is no clear evidence of favorable decay properties for the
associated semigroup. For this reason, we restrict our attention to
non-compact Riemannian manifolds that are Ricci-flat, i.e., $\func{Ric}=0$
(see Section \ref{s7}). In particular, this class includes the celebrated
Calabi-Yau manifolds. On Ricci-flat manifolds, we have nice estimates for
the heat kernel (see \cite{Grigoryan2014}, \cite{yau}), and these are
comparable to those in the Euclidean setting $\mathbb{R}^{m}$. By imposing
additional assumptions, such as bounded geometry or large volume growth (see
figure \ref{fig2}), we obtain the desired estimates in the functional spaces
of interest (see Theorems \ref{th-m0}, \ref{th-s0} and \ref{th-r0}), as well
as global well-posedness results (see Theorem \ref{th-w0}). Important
examples of Ricci-flat manifolds satisfying bounded geometry or exhibiting
large volume growth include the ALE (asymptotically locally Euclidean)
Ricci-flat (see, for example, \cite{NAKAJIMA1990385}). Other examples
include product manifolds of the form $\mathbb{R}^{m}\times C$, where $C$ is
a simply connected compact manifold with $\func{Ric}=0$. In particular, this
class includes the case in which $C$ is a simply connected compact
Calabi-Yau manifold.

\begin{figure}[]
\includegraphics[height=6.5cm]{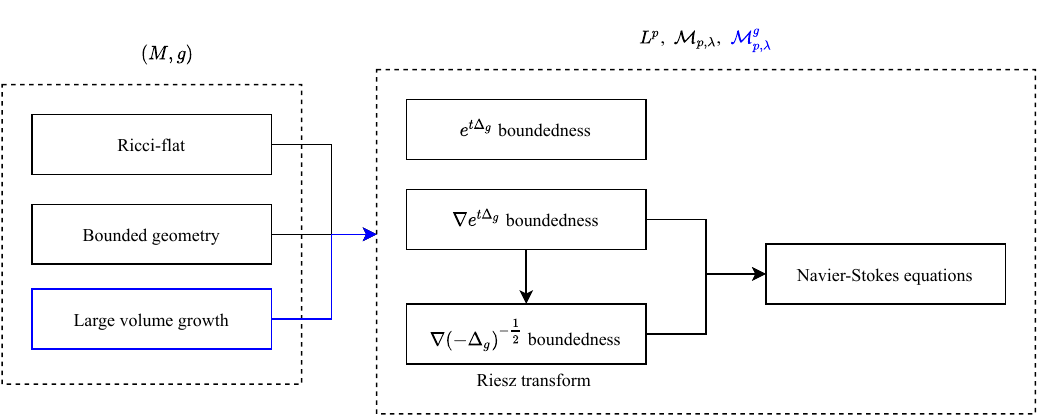}
\caption{The blue arrow indicates the extra assumption for the case $%
\mathcal{M}^{g}_{p,\protect\lambda}$}
\label{fig2}
\end{figure}

In the remainder of the introduction, for the reader convenience, we provide
a detailed overview of the organization of the manuscript.

\subsection{Organization of the Manuscript}

The structure of the present paper is outlined as follows. Section \ref{s1}
is devoted to volume estimates of geodesic balls on manifolds, where we
present upper bounds in Section \ref{s1.1} and lower bounds in Section \ref%
{s1.2}.

In Section \ref{s2}, we address different versions of Morrey spaces on
non-compact manifolds (Definition \ref{morrey}) and provide examples of
useful functions (Lemma \ref{ex-2a}).

Section \ref{s3} is dedicated to establishing semigroup estimates for $%
e^{t\Delta _{g}}$ in the Morrey norm. The main objective is to obtain the
dispersive bound $e^{t\Delta _{g}}:\mathcal{M}_{p,\lambda }^{g}\rightarrow
\mathcal{M}_{q,\lambda }^{g}$, for $1\leq p\leq q<\infty $ (Theorem \ref%
{th-d}).

In Section \ref{s4}, we treat the semigroup $e^{t(\overrightarrow{\Delta }%
+r)}:\mathcal{M}_{p,\lambda }^{g}\rightarrow \mathcal{M}_{q,\lambda }^{g}$.
Dispersive results are presented in Section \ref{s4.1}, while Section \ref%
{s4.2} is devoted to smoothing (gradient) estimates, including bounds for
the operators $\nabla e^{t{\Delta }_{g}}$ (Theorem \ref{es-s}) and $\nabla
e^{t(\overrightarrow{\Delta }+r)}$ (Theorem \ref{th-s-ric}). Section \ref{s5}
addresses the Riesz transform $\nabla (-\Delta _{g})^{-\frac{1}{2}}$, and in
particular the projection operator $\mathbb{P}$. The boundedness of $\nabla
(-\Delta _{g})^{-\frac{1}{2}}$ is established in Theorem \ref{th-ri}.

In Section \ref{s6}, based on Definition \ref{df-m}, we obtain local
well-posedness for a general class of non-compact manifolds. Modifications
of the model with viscosity are discussed in Section \ref{s.visc}. Global
well-posedness results for Einstein manifolds are proved in Section \ref%
{s6.1} (Theorem \ref{th-ein}).

Finally, Section \ref{s7} is devoted to Ricci-flat manifolds. We obtain
estimates for $e^{t\Delta _{g}}$ (Theorem \ref{th-m0}), $\nabla e^{t\Delta
_{g}}$ (Theorem \ref{th-s0}), the Riesz transform (Theorem \ref{th-r0}), and
then establish global well-posedness results (Theorem \ref{th-w0}).

\section{Review of volume estimates}

\label{s1}

\subsection{Upper bounds}

\label{s1.1}

In this work, we suppose that $M$ is a smooth, complete, simply connected,
non-compact Riemannian manifold of dimension $m$, without boundary. Note
that if $M$ satisfies these assumptions and has non-positive sectional
curvature $\kappa \leq 0$, then, by the Cartan-Hadamard theorem, the
exponential map is a diffeomorphism. In particular, the injectivity radius
satisfies $r_{x}=\infty $ for all $x\in M$.

From results in differential geometry (see \cite[Theorem III.3.1]{chavel}),
for manifolds with $r_{x}=\infty $, we can express the Riemannian integral
of an integrable function $f$ on $M$ in spherical coordinates centered at $%
z\in M$, namely
\begin{equation}
\int_{M}f\,dV=\int_{S_{z}}\int_{0}^{\infty }f(\exp _{z}(tv))J(t,v)\,dt\,dv.
\label{eq-int}
\end{equation}%
By Bishop's results (see \cite[Theorem III.4.3]{chavel}) we can estimate $%
J(t,v)$ using the case of hyperbolic space. Indeed, if $\func{Ric}\geq
-K(m-1)$, for some $K>0$, then
\begin{equation*}
J(t,v)\leq \dfrac{\sinh ^{m-1}(\sqrt{-K}t)}{\sqrt{-K}}.
\end{equation*}%
In particular, for $n\in (0,m-1]$ there exists a $C>0$ such that
\begin{equation}
J(t,v)\leq Ct^{n}e^{(m-1)\sqrt{K}t}.  \label{eq-j}
\end{equation}

Moreover, by the Bishop volume comparison theorem (\cite[Theorem III.4.4]%
{chavel} and \cite[Theorem 1.3]{schoen}) if $\func{Ric}(M)\geq -K(m-1)$, $%
K\geq 0$, then for any $x\in M$ and $r>0,$ we have that
\begin{equation}
|B_{x}(R)|\leq |B^{-K}(R)|\hspace{0.3cm}\text{and}\hspace{0.3cm}\frac{%
|B_{x}(R)|}{|B^{-K}(R)|}\,\text{is non-increasing in}\,R,  \label{prop-bk}
\end{equation}%
where $B_{x}(R)$ is the geodesic ball, with center $p$ and radius $R$, and $%
B^{-K}(R)$ is the geodesic ball in the space with constant curvature $-K$.

By \cite[Lemma 2.1]{li} we can compare volumes of geodesic balls with
different radius. It is the subject of the next lemma.

\begin{lem}
\label{l-comp} Let $(M,g)$ be a complete Riemannian manifold of dimension $m$%
. If $\func{Ric}\geq -K(m-1)$ for $K>0$, then there exist constants $C(m)>0$
such that
\begin{equation}
\frac{|B_{x}(R_{2})|}{|B_{x}(R_{1})|}\leq C(c,m)e^{k(R_{2}-R_{1})},
\label{comp}
\end{equation}%
for $0<c<R_{1}<R_{2}$ with $k=\sqrt{K}(m-1)$ and
\begin{equation*}
\frac{|B_{x}(R_{2})|}{|B_{x}(R_{1})|}\leq C(m)\left( \frac{R_{2}}{R_{1}}%
\right) ^{m}e^{k(R_{2}-R_{1})},
\end{equation*}%
for $0<R_{1}<R_{2}$.
\end{lem}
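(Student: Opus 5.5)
The plan is to derive both inequalities from the Bishop–Gromov monotonicity in \eqref{prop-bk}. That monotonicity gives, for $0<R_1<R_2$,
\begin{equation*}
\frac{|B_x(R_2)|}{|B_x(R_1)|}\leq \frac{|B^{-K}(R_2)|}{|B^{-K}(R_1)|},
\end{equation*}
which reduces the lemma to explicit estimates on the model space of constant curvature $-K$. There one has $|B^{-K}(R)|=\omega_{m-1}\int_0^R \big(\sinh(\sqrt{K}t)/\sqrt{K}\big)^{m-1}dt$. Set $V(R)=\int_0^R \sinh^{m-1}(\sqrt K t)\,dt$ (the constant prefactors cancel in the ratio). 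The task is then to bound $V(R_2)/V(R_1)$.

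First, take $R\geq c>0$. Since $\sinh(s)\geq \frac{1-e^{-2\sqrt K c}}{2}e^{s}$ for $s\geq \sqrt K c$ and $\sinh s\le e^s/2$, we get
\begin{equation*}
V(R)\leq \frac{1}{2^{m-1}(m-1)\sqrt K}e^{kR}.
\end{equation*}
For a lower bound, restrict the integral to $[R-c/2,R]$, or to $[R/2,R]$ when $R$ is moderate. This gives $V(R)\geq C(c,m,K)e^{kR}$ for $R\geq c$, where $k=(m-1)\sqrt K$. Dividing the two bounds yields the first inequality with a constant depending on $c,m$ (and $K$, which is regarded as fixed).

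For the second inequality there is no lower cutoff, so I would handle small and large radii together using the elementary two-sided bound
\begin{equation*}
C_1\min\{s,1\}e^{s}\le \sinh s\le C_2\,s\,e^{s}.
\end{equation*}
The upper bound integrates to $V(R_2)\leq C R_2^{m-1}\int_0^{R_2}e^{k t}dt\lesssim R_2^{m}e^{kR_2}$. Here I would use $\int_0^R e^{kt}dt\le R e^{kR}$, which keeps the form $(R_2)^m$ and avoids factors of $1/k$ degenerating.

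For $V(R_1)$ the aim is $V(R_1)\gtrsim R_1^m e^{kR_1}$ for $R_1\le1$ and $V(R_1)\gtrsim e^{kR_1}$ for $R_1\ge1$. The case split is where I expect the main obstacle: tracking constants uniformly in $R_1$ when $R_1\to0$ while $R_2$ may be large. If $R_1\le1$, then $V(R_1)\ge cR_1^{m}$ (with $e^{kR_1}$ bounded), so the ratio is at most $C(R_2/R_1)^m e^{kR_2}\le C'(R_2/R_1)^m e^{k(R_2-R_1)}$, since $e^{kR_1}\le e^{k}$. If $R_1\ge1$, the first part (with $c=1/2$) gives a bound of $Ce^{k(R_2-R_1)}\le C(R_2/R_1)^m e^{k(R_2-R_1)}$, because $R_2/R_1>1$. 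Combining the two cases finishes the proof, with constants depending on $m$ and $K$. Alternatively, one can simply cite \cite[Lemma 2.1]{li}, as the paper indicates.
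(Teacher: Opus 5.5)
Your argument is correct. The paper gives no proof of this lemma beyond quoting \cite[Lemma 2.1]{li}. You instead derive it from the relative volume comparison \eqref{prop-bk}, which the paper already records: it reduces the ratio to $V(R_2)/V(R_1)$ on the model space of curvature $-K$. You then bound $V$ above and below by elementary $\sinh$ estimates, splitting into $R_1\le 1$ and $R_1\ge 1$ for the second inequality. This is the standard proof of the cited result. It makes the lemma self-contained and makes the constants explicit.

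Two small remarks.

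First, on $[R-c/2,R]$ you only have $\sqrt{K}\,t\ge \sqrt{K}\,c/2$. So the lower bound $\sinh s\ge \tfrac{1-e^{-2s_0}}{2}e^{s}$ must be used with $s_0=\sqrt K c/2$, not $s_0=\sqrt K c$. This only changes the constant.

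Second, the $K$-dependence of your constants is genuinely necessary in the first inequality, not just convenient. On the model space itself, take $R_1=2c$ and $R_2=K^{-1/2}$ and let $K\to 0$: the left side blows up like $(\sqrt K c)^{-m}$, while the right side stays bounded by $C e^{m-1}$. So the lemma's $C(c,m)$ has to be read with $K$ fixed.

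For the second inequality the situation is different. Both sides depend only on $\rho_i=\sqrt K R_i$, because $V(R)=K^{-1/2}\int_0^{\sqrt K R}\sinh^{m-1}s\,ds$ and $k(R_2-R_1)=(m-1)(\rho_2-\rho_1)$. If you split at $\sqrt K R_1=1$ instead of $R_1=1$, your argument gives a constant depending on $m$ alone, exactly as the statement claims.
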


Following the ideas of \cite[Lemma 2.1]{li}, we can estimate more clearly
the volume of the ball in the case $\func{Ric}\geq -K(m-1)$.

\begin{lem}
\label{lem-v} Let $(M,g)$ be a complete, simply connected, non-compact
Riemannian manifold of dimension $m$. If $\func{Ric}\geq -K(m-1)$ for $K>0$,
then for $s>0$ and $0\leq n\leq m$ there exist constants $C=C(K,n,m,s)>0$
such that
\begin{equation*}
|B_{x}(R)|\leq
\begin{cases}
CR^{n}, & \text{for}\hspace{0.3cm}0\leq R\leq s \\
Ce^{(m-1)\sqrt{K}R}, & \text{for}\hspace{0.3cm}R>s%
\end{cases}%
.
\end{equation*}%
Moreover,
\begin{equation}
|B_{x}(R)|\leq CR^{n}e^{(m-1)\sqrt{K}R}.  \label{eq-ball}
\end{equation}
\end{lem}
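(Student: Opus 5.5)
The plan is to reduce everything to the explicit volume of balls in the model space of constant curvature $-K$, using the Bishop comparison \eqref{prop-bk}, and then to estimate that model volume separately for small and large radii. Write $k=(m-1)\sqrt{K}$ and let $\omega_{m-1}$ denote the area of the unit sphere $S^{m-1}$. By \eqref{prop-bk} we have, for every $x\in M$ and $R>0$,
\begin{equation*}
|B_{x}(R)|\leq |B^{-K}(R)|=\omega_{m-1}\int_{0}^{R}\left( \frac{\sinh (\sqrt{K}t)}{\sqrt{K}}\right) ^{m-1}dt .
\end{equation*}
It therefore suffices to bound this one-dimensional integral, with constants depending only on $K,n,m,s$.

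For $0\leq R\leq s$, I will use that $t\mapsto \sinh(\sqrt{K}t)/(\sqrt{K}t)$ is continuous and increasing on $[0,s]$. This gives $\sinh(\sqrt{K}t)/\sqrt{K}\leq C(K,s)\,t$ on that interval. Hence $|B_x(R)|\leq C\,\omega_{m-1}R^{m}/m$. Since $0\leq n\leq m$ and $R\leq s$, we have $R^{m}=R^{n}R^{m-n}\leq s^{m-n}R^{n}$, which yields $|B_x(R)|\leq CR^{n}$. For $R>s$, I will use $\sinh(\sqrt{K}t)\leq \tfrac12 e^{\sqrt{K}t}$. This gives
\begin{equation*}
|B_x(R)|\leq \frac{\omega_{m-1}}{(2\sqrt{K})^{m-1}}\int_0^R e^{kt}\,dt\leq \frac{\omega_{m-1}}{(2\sqrt{K})^{m-1}k}\,e^{kR},
\end{equation*}
which proves the two-case bound.

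For the combined estimate \eqref{eq-ball}, I take the larger of the two constants and argue by cases. If $R\leq s$, then $CR^{n}\leq CR^{n}e^{kR}$ trivially. If $R>s$, then $1\leq (R/s)^{n}$, so $Ce^{kR}\leq Cs^{-n}R^{n}e^{kR}$. Enlarging $C$ by the factor $\max\{1,s^{-n}\}$ gives \eqref{eq-ball} for all $R>0$. The case $n=0$ is included, with $R^{0}=1$.

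There is no serious obstacle here. The only point needing care is that the small-$R$ constant is uniform: this relies on the compactness of $[0,s]$ in the bound on $\sinh(\sqrt{K}t)/(\sqrt{K}t)$, together with the trade of $R^{m}$ for $R^{n}$, which is exactly where the dependence of $C$ on $s$ and $n$ enters. Simple connectivity and non-compactness are not really used beyond the setting in which \eqref{prop-bk} is quoted. Alternatively, one could derive the large-$R$ bound from Lemma \ref{l-comp} by comparing with $B_x(s)$, but the direct comparison with the model space is cleaner.
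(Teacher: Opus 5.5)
Your proof is correct and follows the same route as the paper: Bishop comparison with the model space of curvature $-K$, then a power bound on $\sinh$ for small radii and $\sinh y\leq \tfrac12 e^{y}$ for large ones. Your small-$R$ step (bounding by $R^{m}$ and then using $R^{m}\leq s^{m-n}R^{n}$) is cleaner than the paper's bound $\sinh\rho\leq C\rho^{(n-1)/(m-1)}$. As written, that bound divides by zero at $n=1$ through $\alpha=\frac{m-1}{n-1}$, and at $n=0$ it leads to the divergent integral $\int_{0}\rho^{-1}\,d\rho$; your explicit case split also supplies the proof of \eqref{eq-ball}, which the paper only asserts.
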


\noindent {\textbf{Proof.}} For $K>0$, using the Bishop volume comparison
\begin{equation*}
|B_{x}(R)|\leq |B^{-K}(R)|=\int_{0}^{R}\sinh ^{m-1}(\sqrt{K}\rho )\,d\rho =%
\dfrac{1}{\sqrt{K}}\int_{0}^{\sqrt{K}R}\sinh ^{m-1}(\rho )\,d\rho .
\end{equation*}%
If $0\leq \sqrt{K}R\leq \sqrt{K}s$, taking $C>0$ such that $\sinh (\rho
)\leq C\rho ^{\frac{1}{\alpha }}$ with $\alpha =\frac{m-1}{n-1}$, then
\begin{equation*}
|B_{x}(R)|\leq \dfrac{C}{\sqrt{K}}\int_{0}^{\sqrt{K}R}\rho ^{\frac{m-1}{%
\alpha }}\,d\rho \leq CR^{n}.
\end{equation*}%
Now, note that for every $R\geq 0$,
\begin{equation*}
|B_{x}(R)|\leq \dfrac{1}{\sqrt{K}}\int_{0}^{\sqrt{K}R}\sinh ^{m-1}(\rho
)\,d\rho \leq \dfrac{1}{\sqrt{K}}\int_{0}^{\sqrt{K}R}e^{(m-1)\rho }\,d\rho
\leq Ce^{(m-1)\sqrt{K}R}.
\end{equation*}%
Taken together, the above estimates establish polynomial growth for small $r$
and exponential growth for arbitrary $R$, as desired.\fin

\begin{rem}
\ \label{rem-h-esp-1}

\begin{enumerate}
\item[$(i)$] In the hyperbolic space $\mathbb{H}_{\kappa }^{m}$ with
constant sectional curvature $-\kappa $ (where $\kappa >0$), the volume of
geodesic balls grows exponentially. More precisely, for all $R\geq R_{0}>0$,
there exist positive constants $\widetilde{C}$ and $C$ such that
\begin{equation*}
\widetilde{C}e^{(m-1)\sqrt{K}R}\leq |B_{x}(R)|\leq Ce^{(m-1)\sqrt{K}R}.
\end{equation*}

\item[$(ii)$] We also note that alternative upper bounds for the volume of
geodesic balls in manifolds equipped with a weighted measure are available
in the literature; see, for instance, the work of Munteanu and Wang \cite%
{MUNTEANU}.
\end{enumerate}
\end{rem}

\subsection{Lower bounds}

\label{s1.2}

From \cite[Theorem III.4.1 and Theorem III.4.2]{chavel} and the Hadamard
theorem, we have a lower estimate of the geodesic ball for manifolds with
non-positive sectional curvature. Indeed, assume that the $M$ is complete,
simply connected with sectional curvature $\kappa \leq 0$. Considering %
\eqref{eq-int}, then for every $p\in M$, $t>0$ and $v\in S_{p},$ we have
\begin{equation*}
J(t,v)\geq Ct^{m-1}.
\end{equation*}%
Furthermore, if $\kappa \leq \delta $ then
\begin{equation}
|B_{p}(R)|\geq |B^{\delta }(R)|,  \label{l-ball}
\end{equation}%
where $B^{\delta }(R)$ is the geodesic ball in the space with constant
curvature $\delta $. Therefore, for $\kappa \leq 0$, it follows that $%
|B_{p}(R)|\geq c\,R^{m}$. More generally, it is very common to suppose the
hypothesis $\func{Ric}\geq -K(m-1)$ with polynomial growth
\begin{equation}
|B_{x}(R)|\geq \alpha R^{n},\hspace{0.2cm}\forall \,R>0,\hspace{0.2cm}%
\forall \,x\in M,  \label{n-vol}
\end{equation}%
for $n\in (0,m]$ and $\alpha =\alpha (n)>0$ constants. When $n=m$, the
condition \eqref{n-vol} is known in the literature as large volume growth
(see \cite{xia1999open} and \cite{Li2011}).

Moreover, by \cite[Corollary 1]{qing} we have exponential growth for the
volume of the geodesic ball. Indeed, for a complete non-positive curved $m$%
-dimensional Riemannian manifold $(m\geq 2)$, assume that the $\func{Ric}%
\leq -c_{0}$ for some constant $c_{0}>0$, then for every $R_{0}>0$ there is
a constant $C=C(c_{0},R_{0})>0$ such that
\begin{equation}
|B_{p}(R)|\geq Ce^{\sqrt{c_{0}}R}  \label{e-vol}
\end{equation}%
holds for $R\geq R_{0}$. For further details on lower bounds, see the work
of Schoen \cite{schoen}.


\section{Morrey spaces on manifolds}

\label{s2}

The study of partial differential equations in Morrey spaces has been
extensively developed in the Euclidean setting $\mathbb{R}^{n}$. Concerning
the Navier-Stokes equations, Giga and Miyakawa \cite{Giga1}, Kato \cite%
{Kato1992}, and Taylor \cite{Taylor1992} established well-posedness results
in these spaces defined on $\mathbb{R}^{n}$. A detailed analysis of Morrey
spaces in the Euclidean framework can be found in \cite{ad}. The extension
of Morrey spaces to more general manifolds is also discussed in \cite{ad};
indeed, the most natural generalization involves using the volume of
geodesic balls. In this work, we present some possible approaches to this
generalization.

\begin{df}
\label{morrey}Let $(M,g)$ be a complete $m$-dimensional Riemannian manifold
without boundary. Recall that the norm of a vector $f\in \Gamma (TM)$ is
defined by $|f|=\sqrt{g(f,f)}.$ For $p\in \lbrack 1,\infty )$ and $\lambda
\in \lbrack 0,m)$, we define
\begin{align*}
\mathcal{M}_{p,\lambda }^{A}(\Gamma (TM))=\bigg\{& f\in L_{loc}^{p}(\Gamma
(TM))\,:\, \\
& \Vert f\Vert _{A,p,\lambda }=\sup_{x_{0}\in M,R>0}\left( A(x_{0},R)^{-%
\frac{\lambda }{m}}\int_{B_{x_{0}}(R)}|f|^{p}\right) ^{\frac{1}{p}}<\infty %
\bigg\},
\end{align*}%
where $B_{x_{0}}(R)$ denotes the geodesic ball centered at $x_{0}$ with
radius $R$, and $A(x_{0},R)$ is a prescribed quantity representing an
estimate of the volume of a ball of radius $R$. Similarly, one can define $%
\mathcal{M}_{p,\lambda }^{A}(M)$ for scalar functions. Typical choices of $%
A(x_{0},R)$ give rise to the following spaces:

\begin{itemize}
\item $(\mathcal{M}^g_{p,\lambda}, \|\cdot\|_{g,p,\lambda})$: if $A(x_0,R) =
|B_{x_0}(R)|$, where $B_{x_0}(R)$ is the geodesic ball with center $x_0$ and
radius $R$;

\item $(\mathcal{M}^K_{p,\lambda}, \|\cdot\|_{K,p,\lambda})$: if $A(x_0,R) =
|B^{K}(R)|$, where $B^{K}(R)$ is some ball with radius $R$ of space form
with curvature $K$;

\item $(\mathcal{M}_{p,\lambda}, \|\cdot\|_{p,\lambda})$: if $A(x_0,R) = R^m$%
;

\item $(\mathcal{M}^{e}_{p,\lambda}, \|\cdot\|_{e,p,\lambda})$: if $A(x_0,R)
= e^{\sqrt{K}(m-1) r}$.
\end{itemize}
\end{df}

Note that when $\func{Ric}(M)\geq -K(m-1)$, $K\geq 0$, we have $\mathcal{M}%
_{p,\lambda }^{g}\subset \mathcal{M}_{p,\lambda }^{K}\subset \mathcal{M}%
_{p,\lambda }^{e}.$ If, in addition, $M$ has large-volume growth (which
includes the case $\kappa \leq 0$), then $\mathcal{M}_{p,\lambda }\subset
\mathcal{M}_{p,\lambda }^{g}$. If $K=0$, we obtain the equivalence $\mathcal{%
M}_{p,\lambda }^{0}=\mathcal{M}_{p,\lambda }$. Moreover, for $M=\mathbb{R}%
^{m}$, $\mathcal{M}_{p,\lambda }(\mathbb{R}^{m})=\mathcal{M}_{p,\lambda
}^{0}(\mathbb{R}^{m})=\mathcal{M}_{p,\lambda }^{g}(\mathbb{R}^{m}).$

For simplicity, this paper is devoted to the study of initial data in the
spaces $\mathcal{M}_{p,\lambda }^{g}$ and $\mathcal{M}_{p,\lambda }^{e}$. As
in the Euclidean case, these spaces are Banach space. Moreover, they satisfy
the following inclusion property, stated in the next lemma.

\begin{lem}
\label{prop-inck} Let $p,q\in \lbrack 1,\infty )$ and $\lambda ,\mu \in
\lbrack 0,m)$ be such that $\frac{m-\lambda }{p}=\frac{m-\mu }{q}$ and $%
p\leq q$. Then, we have the continuous inclusion $\mathcal{M}_{q,\mu
}^{g}\subset \mathcal{M}_{p,\lambda }^{g}$.
\end{lem}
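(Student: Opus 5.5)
Fix $f\in\mathcal{M}_{q,\mu}^{g}$, a point $x_{0}\in M$ and a radius $R>0$, and write $B=B_{x_{0}}(R)$. The plan is to apply Hölder's inequality on the ball and then check that the powers of $|B|$ cancel because of the scaling relation $\frac{m-\lambda}{p}=\frac{m-\mu}{q}$. The point is that Definition \ref{morrey} for $\mathcal{M}^{g}$ normalizes by $|B_{x_{0}}(R)|$ itself. So the argument never needs to compare $|B|$ with $R^{m}$. It uses only the fact that $|B|$ is a finite positive number, and in particular the volume estimates of Section \ref{s1} (Lemma \ref{lem-v}, \eqref{l-ball}) are not needed.

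Since $p\le q$, Hölder's inequality with exponents $q/p$ and $(1-p/q)^{-1}$ gives
\begin{equation*}
\int_{B}|f|^{p}\le |B|^{1-\frac{p}{q}}\Big(\int_{B}|f|^{q}\Big)^{\frac{p}{q}}\le |B|^{1-\frac{p}{q}}\,\big(|B|^{\frac{\mu}{m}}\|f\|_{g,q,\mu}^{q}\big)^{\frac{p}{q}}.
\end{equation*}
Multiplying by $|B|^{-\lambda/m}$ then yields
\begin{equation*}
|B|^{-\frac{\lambda}{m}}\int_{B}|f|^{p}\le |B|^{\,1-\frac{p}{q}+\frac{\mu p}{mq}-\frac{\lambda}{m}}\,\|f\|_{g,q,\mu}^{p}.
\end{equation*}

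Next compute the exponent. It equals $\frac{1}{m}\big[(m-\lambda)-\frac{p}{q}(m-\mu)\big]$. By hypothesis $\frac{p}{q}(m-\mu)=m-\lambda$, so the exponent is $0$ and the right-hand side is just $\|f\|_{g,q,\mu}^{p}$. Taking the supremum over $x_{0}$ and $R$ and then the $p$-th root gives $\|f\|_{g,p,\lambda}\le\|f\|_{g,q,\mu}$, which is the continuous inclusion with constant $1$. Local $p$-integrability of $f$ follows from local $q$-integrability together with the same Hölder step.

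The only potential subtlety is the case $p=q$. There the relation forces $\lambda=\mu$, the spaces coincide, and Hölder is not needed. There is no real obstacle here: the exact cancellation of the exponent is what makes the argument independent of the geometry.
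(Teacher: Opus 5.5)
Your proof is correct and is essentially the paper's argument: Hölder's inequality on the geodesic ball, followed by the exact cancellation of the powers of $|B_{x_0}(R)|$ forced by $\frac{m-\lambda}{p}=\frac{m-\mu}{q}$, which gives $\|f\|_{g,p,\lambda}\le\|f\|_{g,q,\mu}$ with constant $1$. You are also right that no volume comparison is needed; the paper cites \eqref{prop-bk} here but never actually uses it.
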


\noindent {\textbf{Proof.}} Using H\"{o}lder's inequality in $L^{p},$ with $%
p^{-1}=r^{-1}+q^{-1}$, together with \eqref{prop-bk}, we can estimate
\begin{align*}
\Vert f\Vert _{p;x_{0},R}& \leq \Vert 1\Vert _{r;x_{0},R}\Vert f\Vert
_{q;x_{0},R}=|B_{x_{0}}(R)|^{\frac{1}{r}}\Vert f\Vert _{q;x_{0},R} \\
& =(|B_{x_{0}}(R)|^{\frac{1}{m}})^{\frac{m}{p}-\frac{m}{q}}\Vert f\Vert
_{q;x_{0},R},
\end{align*}%
where $\Vert f\Vert _{p;x_{0},R}=\left( \int_{B_{x_{0}}(R)}|f|^{p}\right)
^{1/p}$. Then, it follows that
\begin{align*}
(|B_{x_{0}}(R)|^{\frac{1}{m}})^{-\frac{\lambda }{p}}\Vert f\Vert
_{p;x_{0},R}& \leq (|B_{x_{0}}(R)|^{\frac{1}{m}})^{\frac{m-\lambda }{p}-%
\frac{m}{q}}\Vert f\Vert _{q;x_{0},R} \\
& \leq (|B_{x_{0}}(R)|^{\frac{1}{m}})^{-\frac{\mu }{q}}\Vert f\Vert
_{q;x_{0},R},
\end{align*}%
which yields $\Vert f\Vert _{g,p,\lambda }\leq \Vert f\Vert _{g,q,\mu }.$\fin

It is not difficult to establish H\"{o}lder's inequality in the framework of
Morrey spaces on manifolds.

\begin{lem}[H\"{o}lder's inequality]
\label{holder}Let $p,q,r\in \lbrack 1,\infty )$ and $\lambda ,\mu ,\tau \in
\lbrack 0,m)$ satisfy $\frac{1}{r}=\frac{1}{p}+\frac{1}{q}$ and $\frac{\tau
}{r}=\frac{\lambda }{p}+\frac{\mu }{q}$. If $f\in \mathcal{M}_{p,\lambda
}^{g}$ and $h\in \mathcal{M}_{q,\mu }^{g}$, then $fh\in \mathcal{M}_{r,\tau
}^{g}$ and
\begin{equation*}
\Vert fh\Vert _{g,r,\tau }\leq \Vert f\Vert _{g,p,\lambda }\Vert h\Vert
_{g,q,\mu }.
\end{equation*}
\end{lem}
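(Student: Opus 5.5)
The plan is to argue ball by ball, exactly as in the proof of Lemma \ref{prop-inck}, and then take the supremum. Fix $x_{0}\in M$ and $R>0$, and write $\Vert f\Vert _{p;x_{0},R}=\left( \int_{B_{x_{0}}(R)}|f|^{p}\right) ^{1/p}$. Since $r<\infty$ and $\frac{1}{r}=\frac{1}{p}+\frac{1}{q}$, the pair $p/r$ and $q/r$ are conjugate exponents. Applying the classical H\"{o}lder inequality on the ball $B_{x_{0}}(R)$ to $|f|^{r}$ and $|h|^{r}$ with these exponents gives
\begin{equation*}
\Vert fh\Vert _{r;x_{0},R}\leq \Vert f\Vert _{p;x_{0},R}\,\Vert h\Vert _{q;x_{0},R}.
\end{equation*}
Here $|fh|$ means the pointwise product of the norms, or $|g(f,h)|\leq |f||h|$ if $fh$ is interpreted as a contraction; either way the pointwise bound $|fh|\leq |f||h|$ is all that is used. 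In particular $fh\in L_{loc}^{r}$.

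Next comes the normalization. The weight in the $\mathcal{M}_{r,\tau }^{g}$ norm is $|B_{x_{0}}(R)|^{-\frac{\tau }{mr}}$. By the hypothesis $\frac{\tau }{r}=\frac{\lambda }{p}+\frac{\mu }{q}$, this factor splits exactly as
\begin{equation*}
|B_{x_{0}}(R)|^{-\frac{\tau }{mr}}=|B_{x_{0}}(R)|^{-\frac{\lambda }{mp}}\,|B_{x_{0}}(R)|^{-\frac{\mu }{mq}}.
\end{equation*}
Multiplying the local H\"{o}lder estimate by this factor gives
\begin{equation*}
\left( |B_{x_{0}}(R)|^{-\frac{\tau }{m}}\int_{B_{x_{0}}(R)}|fh|^{r}\right) ^{\frac{1}{r}}\leq \left( |B_{x_{0}}(R)|^{-\frac{\lambda }{m}}\int_{B_{x_{0}}(R)}|f|^{p}\right) ^{\frac{1}{p}}\left( |B_{x_{0}}(R)|^{-\frac{\mu }{m}}\int_{B_{x_{0}}(R)}|h|^{q}\right) ^{\frac{1}{q}}.
\end{equation*}
The right-hand side is at most $\Vert f\Vert _{g,p,\lambda }\Vert h\Vert _{g,q,\mu }$. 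Taking the supremum over $x_{0}\in M$ and $R>0$ then yields both the membership $fh\in \mathcal{M}_{r,\tau }^{g}$ and the stated inequality.

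There is essentially no obstacle here. The only points that need checking are small ones. First, $\tau \in \lbrack 0,m)$ is given by hypothesis, so the target space is well defined. Second, unlike the Euclidean case, no volume comparison such as \eqref{prop-bk} is needed, because the same ball $B_{x_{0}}(R)$ and the same volume $|B_{x_{0}}(R)|$ appear in all three norms. The exponent identity alone makes the weights match exactly, with constant $1$.
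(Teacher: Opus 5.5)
Your proof is correct and complete: Hölder on each ball with the conjugate exponents $p/r$ and $q/r$ is the key step. It is followed by the exact factorization $|B_{x_{0}}(R)|^{-\frac{\tau}{mr}}=|B_{x_{0}}(R)|^{-\frac{\lambda}{mp}}|B_{x_{0}}(R)|^{-\frac{\mu}{mq}}$ forced by $\frac{\tau}{r}=\frac{\lambda}{p}+\frac{\mu}{q}$, and together these form the standard argument.

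The paper gives no proof of this lemma (it only remarks that it is ``not difficult''). Your ball-by-ball argument is the one implicitly intended, in the same spirit as the proof of Lemma \ref{prop-inck}. Incidentally, $\tau=\frac{r}{p}\lambda+\frac{r}{q}\mu$ is a convex combination of $\lambda$ and $\mu$, so $\tau\in[0,m)$ holds automatically.
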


For manifolds with bounded Ricci curvature, one can adapt the classical
examples of functions of the form $1/|x|^{\alpha }$ in $\mathbb{R}^{m}$ to
construct functions belonging to Morrey spaces on such manifolds. These
examples will play an important role in the semigroup estimates from $%
\mathcal{M}_{p,\lambda }^{g}$ to $\mathcal{M}_{p,\lambda }^{g}$ (see Theorem %
\ref{th-mm}). We therefore state the following lemma.

\begin{lem}
\label{ex-2a} Let $(M,g)$ be a complete, simply connected, non-compact,
boundaryless Riemannian manifold of dimension $m$ whose Ricci curvature
satisfies $\func{Ric}\geq -K(m-1)$ for some $K>0$. Moreover, assume that $M$
has large volume growth (e.g., when $\kappa \leq 0$) and let $p\in \lbrack
1,\infty )$.

\begin{enumerate}
\item[$(i)$] We have that the function $r(\cdot ,y)^{-l}e^{-kr(\cdot ,y)}$
belongs to $\mathcal{M}_{p,\lambda }^{g}$ and $\mathcal{M}_{p,\lambda }$ for
$0\leq l\leq \frac{m-\lambda }{p}$, $k\geq \tfrac{1}{p}(m-1)\sqrt{K}$, and $%
\lambda \in (0,m)$. Moreover,
\begin{equation*}
\Vert r(\cdot ,y)^{-l}e^{-kr(\cdot ,y)}\Vert _{g,p,\lambda },\Vert r(\cdot
,y)^{-l}e^{-kr(\cdot ,y)}\Vert _{p,\lambda }\leq C,
\end{equation*}%
for some constant $C>0$ that does not depend on $y\in M$.

\item[$(ii)$] In the case $\lambda =0$, the function $r(\cdot
,y)^{-l}e^{-kr(\cdot ,y)}$ belongs to $L^{p}$ provided that $0\leq l<\frac{m%
}{p}$ and $k>\tfrac{1}{p}(m-1)\sqrt{K}$.
\end{enumerate}
\end{lem}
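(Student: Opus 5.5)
The plan is to estimate $\int_{B_{x_0}(R)}f^p$ directly for $f=r(\cdot,y)^{-l}e^{-kr(\cdot,y)}$, splitting into small balls ($R\le 1$) and large balls ($R>1$). Since $k\ge\frac1p(m-1)\sqrt K$, we have the pointwise bound $f^p\le r^{-lp}e^{-(m-1)\sqrt K\,r}$, and I will work with this bound throughout. In all cases the constants will depend only on $K,m,p,l,\lambda$ and the constant $\alpha$ in \eqref{n-vol}, and never on $y$ or $x_0$.

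\textbf{Step 1: reduction to $\mathcal M_{p,\lambda}$.} For $R\le1$, Lemma \ref{lem-v} with $n=m$ and the large volume growth \eqref{n-vol} give $\alpha R^m\le|B_{x_0}(R)|\le CR^m$. Hence on small balls the two normalisations $|B_{x_0}(R)|^{\lambda/m}$ and $R^\lambda$ are comparable. For $R>1$, \eqref{n-vol} gives $|B_{x_0}(R)|^{\lambda/m}\ge cR^\lambda$. So it suffices to prove
\[
\int_{B_{x_0}(R)}f^p\le C\,\phi(R),
\]
where $\phi(R)=R^\lambda$ for the space $\mathcal M_{p,\lambda}$, and $\phi(R)=|B_{x_0}(R)|^{\lambda/m}$ for the space $\mathcal M^g_{p,\lambda}$.

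\textbf{Step 2: small balls, $R\le1$.} Here I drop the exponential factor and use $f^p\le r(\cdot,y)^{-lp}$.

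If $r(x_0,y)\ge2R$, then $r(\cdot,y)\ge R$ on the ball. Therefore $\int_{B_{x_0}(R)} f^p\le R^{-lp}|B_{x_0}(R)|\le CR^{m-lp}$.

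Otherwise $B_{x_0}(R)\subset B_y(3R)$. Using the spherical coordinates \eqref{eq-int} centered at $y$, together with \eqref{eq-j} (with $n=m-1$, and $t\le3$), I get
\[
\int_{B_{x_0}(R)}f^p\le C\int_0^{3R}t^{m-1-lp}\,dt=CR^{m-lp}.
\]
The integral converges because $lp\le m-\lambda<m$, which uses $\lambda>0$.

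In both cases, since $m-lp\ge\lambda$ and $R\le1$, we obtain $R^{m-lp}\le R^\lambda$.

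\textbf{Step 3: large balls, $R>1$.} I first remove the singular part: by Step 2 applied at scale $1$, the piece over $B_y(1)$ is bounded by a constant. The remaining region is decomposed into annuli $A_j=\{j\le r(\cdot,y)<j+1\}$, $j\ge1$. On each annulus, Lemma \ref{lem-v} gives
\[
\int_{A_j}f^p\le e^{-(m-1)\sqrt K j}\,|B_y(j+1)|\le Ce^{(m-1)\sqrt K}.
\]
By the triangle inequality, $r(\cdot,y)$ ranges over an interval of length $2R$ on $B_{x_0}(R)$. Hence at most $2R+2$ annuli meet the ball, and so
\[
\int_{B_{x_0}(R)}f^p\le C(1+R).
\]

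\textbf{Main obstacle.} The difficulty is exactly this endpoint $k=\frac1p(m-1)\sqrt K$. When $x_0=y$, the linear growth $C(1+R)$ is sharp (as one sees on $\mathbb H^m$). So I must absorb $R$ into $\phi(R)$.

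For $\lambda\ge1$ this is immediate from $R\le R^\lambda$ when $R>1$.

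For the space $\mathcal M^g_{p,\lambda}$, I would instead use the exponential lower bound \eqref{e-vol}, which is available in the model case of negative curvature ($\kappa\le0$, $\func{Ric}\le-c_0$). It gives $|B_{x_0}(R)|^{\lambda/m}\ge Ce^{\sqrt{c_0}\lambda R/m}\gtrsim 1+R$ for every $\lambda>0$.

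For $\mathcal M_{p,\lambda}$ with $\lambda<1$, I would try first to refine the annulus count. The idea is to bound $|A_j\cap B_{x_0}(R)|$ by $\min\{|B_y(j+1)|,|B_{x_0}(R)|\}$. Alternatively, one can note that $1+R$ is controlled by $|B_{x_0}(R)|^{\lambda/m}$ via \eqref{e-vol}, and that $\mathcal M^g_{p,\lambda}$ is the space used later in the paper. I expect this absorption to be the delicate point of the proof.

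\textbf{Part (ii).} Here $\lambda=0$, so we need $f\in L^p(M)$. The ball $B_y(1)$ contributes $C\int_0^1t^{m-1-lp}\,dt<\infty$, since $lp<m$. The complement is covered by the annuli $A_j$, and their contributions sum to at most
\[
C\sum_{j\ge1}e^{-(pk-(m-1)\sqrt K)j}<\infty,
\]
which converges because $pk>(m-1)\sqrt K$.
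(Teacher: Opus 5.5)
Your Steps 1--3 and part (ii) are correct, but the proposal does not prove (i) as stated. For $\lambda<1$ at the endpoint $kp=(m-1)\sqrt K$, you either import \eqref{e-vol}, which needs $\func{Ric}\le -c_0$ and $\kappa\le 0$ (neither is a hypothesis of the lemma), or, for $\mathcal M_{p,\lambda}$, give no argument. Your diagnosis of the difficulty is right, and in fact the $\mathcal M_{p,\lambda}$ half of (i) cannot be proved in this regime, because it is false. Take $\mathbb H^m$ with curvature $-K$, which satisfies every hypothesis, and set $x_0=y$, $kp=(m-1)\sqrt K$. Then
\[
\int_{B_y(R)}r(x,y)^{-lp}e^{-kp\,r(x,y)}\,dx=\omega_{m-1}\int_0^R t^{-lp}e^{-(m-1)\sqrt K t}\Big(\frac{\sinh(\sqrt K t)}{\sqrt K}\Big)^{m-1}dt\ \ge\ c\int_1^R t^{-lp}\,dt ,
\]
which is the same computation as in Remark \ref{rem-e}(iii). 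This grows like $R^{1-lp}$, so it is not $O(R^\lambda)$ when $lp+\lambda<1$ (e.g.\ $l=0$, $\lambda=\tfrac12$).

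The paper's proof misses this because it applies \eqref{eq-j} with $n=lp+\lambda-1$. That bound is stated only for $n\in(0,m-1]$, and it genuinely fails for $n<0$: on $\mathbb H^m$, $J(t,v)e^{-(m-1)\sqrt K t}$ tends to a positive constant. Where $lp+\lambda\ge 1$, the paper's route is shorter than yours. It first reduces (via $r(z,y)\ge 2R$ or $B_z(R)\subset B_y(3R)$) to a ball centred at the singularity. The choice $n=lp+\lambda-1$ then makes the radial integrand $t^{\lambda-1}$ and gives $CR^\lambda$ for all $R$ at once, with no small/large split. You recover exactly that regime if, in Step 3, you keep the factor $j^{-lp}$ on $A_j$ instead of bounding $r^{-lp}\le 1$. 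Over at most $2R+2$ consecutive annuli the sum is then $O(R^{1-lp})$, $O(\log R)$ or $O(1)$, according as $lp<1$, $lp=1$ or $lp>1$. Each of these is $\lesssim R^\lambda$ when $lp+\lambda\ge 1$.

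For $\mathcal M^g_{p,\lambda}$, by contrast, the refinement you mention closes the gap for every $\lambda\in(0,m)$, without any upper curvature bound. Let $V=|B_{x_0}(R)|$, $c=(m-1)\sqrt K$ and $R>1$. On $A_j$ one has $f^p\le e^{-cj}$, and $|A_j\cap B_{x_0}(R)|\le\min\{|B_y(j+1)|,V\}$ with $|B_y(j+1)|\le Ce^{c(j+1)}$ by Lemma \ref{lem-v}. Hence
\[
\int_{B_{x_0}(R)}f^p\le C+\sum_{j\ge1}e^{-cj}\min\{|B_y(j+1)|,V\}\le C+\sum_{j\ge1}\min\{Ce^{c},\,e^{-cj}V\}\le C(1+\log_+V).
\]
The last step holds because only $O(1+\log_+V)$ indices satisfy $e^{-cj}V\ge Ce^{c}$, and the remaining terms form a geometric series. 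Since $V\ge\alpha R^m\ge\alpha$, the right-hand side is $\le C_\lambda V^{\lambda/m}$, uniformly in $y$ and $x_0$.

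So the correct conclusion is that the $\mathcal M^g_{p,\lambda}$ claim holds as stated. The $\mathcal M_{p,\lambda}$ claim needs the extra condition $lp+\lambda\ge 1$ or $kp>(m-1)\sqrt K$; in the latter case your annulus sum converges, exactly as in your part (ii).
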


\noindent {\textbf{Proof.}} We restrict ourselves to proving the lemma for
the spaces $\mathcal{M}_{p,\lambda }^{g}$ and ($\lambda =0$) $L^{p}$, as the
corresponding result for the space $\mathcal{M}_{p,\lambda }$ follows by
analogous arguments. To do this, we need to verify that, for every $z\in M$,
\begin{equation*}
\int_{B_{z}(R)}r(x,y)^{-lp}e^{-kpr(x,y)}\,dx\leq C|B_{z}(R)|^{\frac{\lambda
}{m}}.
\end{equation*}%
First, consider the case $r(z,y)\geq 2R$. For any $x\in B_{z}(R)$,
\begin{equation*}
2R\leq r(z,y)\leq R+r(x,y)\Rightarrow r(x,y)\geq R\geq r(x,z).
\end{equation*}%
We then apply the spherical coordinates \eqref{eq-int}, together with
estimate \eqref{eq-j}, to obtain
\begin{align*}
\int_{B_{z}(R)}r(x,y)^{-lp}e^{-kpr(x,y)}\,dx& \leq
\int_{B_{z}(R)}r(x,z)^{-lp}e^{-kpr(x,z)}\,dx \\
& =\int_{S_{z}}\int_{0}^{R}|tv|^{-lp}e^{-kp|tv|}J(t,v)\,dt\,dv \\
& \leq C\int_{0}^{R}t^{-lp}t^{lp+\lambda -1}e^{-kpt+(m-1)\sqrt{K}t}\,dt \\
& \leq C\int_{0}^{R}t^{\lambda -1}\,dt \\
& \leq CR^{\lambda }\leq C|B_{z}(R)|^{\frac{\lambda }{m}}.
\end{align*}%
For the case $\lambda =0$, taking $kp>(m-1)\sqrt{K}$ and $lp<m$, we get
\begin{align*}
\int_{S_{z}}\int_{0}^{R}|tv|^{-lp}e^{-kp|tv|}J(t,v)\,dt\,dv& \leq
C\int_{0}^{R}t^{-lp}t^{m-1}e^{-kpt+(m-1)\sqrt{K}t}\,dt \\
& \leq C\int_{0}^{R}t^{-lp+m-1}e^{-kpt+(m-1)\sqrt{K}t}\,dt\leq C.
\end{align*}%
In the case $r(z,y)<2R$, we have $B_{z}(R)\subset B_{y}(3R),$ and hence
\begin{align*}
\int_{B_{z}(R)}r(x,y)^{-lp}e^{-kpr(x,y)}\,dx& \leq
\int_{B_{y}(3R)}r(x,y)^{-lp}e^{-kpr(x,y)}\,dx \\
& =\int_{S_{y}}\int_{0}^{3R}|tv|^{-lp}e^{-kp|tv|}J(t,v)\,dt\,dv \\
& \leq C\int_{0}^{3R}t^{-lp}t^{lp+\lambda -1}e^{-kpt+(m-1)\sqrt{K}t}\,dt \\
& \leq C\int_{0}^{3R}t^{\lambda -1}\,dt \\
& \leq CR^{\lambda }\leq C|B_{z}(R)|^{\frac{\lambda }{m}}.
\end{align*}%
Again, for $\lambda =0$, it follows that
\begin{align*}
\int_{S_{y}}\int_{0}^{3R}|tv|^{-lp}e^{-kp|tv|}J(t,v)\,dt\,dv& \leq
C\int_{0}^{3R}t^{-lp}t^{m-1}e^{-kpt+(m-1)\sqrt{K}t}\,dt \\
& \leq C\int_{0}^{3R}t^{-lp+m-1}e^{-kpt+(m-1)\sqrt{K}t}\,dt\leq C.
\end{align*}%
Combining the above estimates yields the desired result.\fin

\begin{rem}
\label{rem-e} \hspace{0.1cm}

\begin{enumerate}
\item[$(i)$] Note that, just as in Lemma \ref{ex-2a}, we can establish
similar properties for the functions $r(\cdot ,y)^{-l}e^{-kr(\cdot ,y)^{2}}$.

\item[$(ii)$] Taking $\mu =0$ in Lemma \ref{prop-inck}, we obtain the
inclusion $L^{q}\subset \mathcal{M}_{p,\lambda }^{g}$ for $p<q$ and $\frac{%
m-\lambda }{p}=\frac{m}{q}$. Moreover, if we additionally assume $\kappa
\leq 0$, then this inclusion is proper. To see this, observe that by Lemma %
\ref{ex-2a}, the function $r(\cdot ,y)^{-\eta }e^{-kr(\cdot ,y)}\in \mathcal{%
M}_{p,\lambda }^{g}$ with $\eta =\frac{m-\lambda }{p}$. However, for $R>1,$
we have
\begin{align*}
\int_{B_{y}(R)}r(x,y)^{-\eta q}e^{-kqr(x,y)}\,dx&
=\int_{S_{z}}\int_{0}^{R}|tv|^{-\eta q}e^{-kq|tv|}J(t,v)\,dt\,dv \\
& \geq \int_{S_{z}}\int_{0}^{1}t^{-\eta q}e^{-kqt}J(t,v)\,dt\,dv \\
& \geq C\int_{0}^{1}t^{-\eta q+m-1}\,dt,
\end{align*}%
where the last integral diverges when $\eta =\frac{m-\lambda }{p}\geq \frac{m%
}{q}$. Hence the $L^{q}$-norm is not finite, and thus $\mathcal{M}%
_{p,\lambda }^{g}\nsubseteq L^{q}$ for $p<q$ and $\frac{m-\lambda }{p}\geq
\frac{m}{q}$. Due to the above strict inclusion $L^{q}\varsubsetneq \mathcal{%
M}_{p,\lambda }^{g}$, note that imposing smallness conditions in the weaker
norm of Morrey spaces allows the treatment of some classes of large data in $%
L^{p}$-spaces.

\item[$(iii)$] For the hyperbolic space, we have for $kp=\sqrt{K}(m-1)$ and $%
0\leq lp\leq 1$ that $r(\cdot ,y)^{-l}e^{-kr(\cdot ,y)}\in \mathcal{M}%
_{p,\lambda }^{g}(\mathbb{H}_{\kappa }^{m})$, but $r(\cdot
,y)^{-l}e^{-kr(\cdot ,y)}\notin L^{p}(\mathbb{H}_{\kappa }^{m})$. Indeed,
the first claim follows from Lemma \ref{ex-2a}. For the second claim, using
Remark \ref{rem-h-esp-1}), observe that for $R>1$
\begin{align*}
\int_{B_{y}(R)}r(x,y)^{-lp}e^{-kpr(x,y)}\,dx&
=\int_{S_{z}}\int_{0}^{R}|tv|^{-lp}e^{-kp|tv|}J(t,v)\,dt\,dv \\
& \geq \int_{S_{z}}\int_{1}^{R}t^{-lp}e^{-kpt}J(t,v)\,dt\,dv \\
& \geq C\int_{1}^{R}t^{-lp}e^{-kpt+\sqrt{K}(m-1)t}\,dt \\
& =C\int_{1}^{R}t^{-lp}\,dt.
\end{align*}%
Since $\mathbb{H}_{\kappa }^{m}=\cup _{R>0}B_{y}(R)$, this shows that the
function $r(\cdot ,y)^{-l}e^{-kr(\cdot ,y)}$ does not belong to $L^{p}(%
\mathbb{H}_{\kappa }^{m})$. Consequently, $M_{p,\lambda }^{g}(\mathbb{H}%
_{\kappa }^{m})\nsubseteq L^{p}(\mathbb{H}_{\kappa }^{m})$. Moreover, for
fixed $p$ and $\lambda $, consider indexes $p_{0}\leq p$ and $\lambda
_{0}\geq \lambda $ such that $\frac{m-\lambda }{p}=\frac{m-\lambda _{0}}{%
p_{0}}$. By the inclusion property, we then obtain that $\mathcal{M}%
_{p,\lambda }^{g}(\mathbb{H}_{\kappa }^{m})\subset \mathcal{M}%
_{p_{0},\lambda _{0}}^{g}(\mathbb{H}_{\kappa }^{m})\nsubseteq L^{p_{0}}(%
\mathbb{H}_{\kappa }^{m})$ for $p\geq p_{0}$.
\end{enumerate}
\end{rem}

In the following remark, we consider both pure power-law data and
nondecaying data.

\begin{rem}
\label{rem-e-2} \hspace{0.1cm}

\begin{enumerate}
\item[$(i)$] (Power-law distance data) For manifolds with $\func{Ric}\geq 0$%
, we have that both the inclusion property (Lemma \ref{prop-inck}) and H\"{o}%
lder's inequality (Lemma \ref{holder}) are also valid for the space $%
\mathcal{M}_{p,\lambda }$. Moreover, Lemma \ref{ex-2a} can be adapt to show
that the power-law distance function $r(\cdot ,y)^{-\eta }$ belongs to $%
\mathcal{M}_{p,\lambda }$ with $\eta =\frac{m-\lambda }{p}$. If, in
addition, $M$ has large volume growth, then this function does not belong to
$L^{q}(M)$ for $p<q$ and $\frac{m-\lambda }{p}\leq \frac{m}{q}$. Indeed,
assume $\func{Ric}\geq 0$. First considering the case $r(z,y)\geq 2R$, we
note that
\begin{align*}
\int_{B_{z}(R)}r(x,y)^{-\eta p}\,dx& \leq \int_{B_{z}(R)}r(x,z)^{-\eta
p}\,dx=\int_{S_{z}}\int_{0}^{R}|tv|^{-\eta p}J(t,v)\,dt\,dv \\
& \leq C\int_{0}^{R}t^{-\eta p}t^{m-1}\,dt\leq C\int_{0}^{R}t^{\lambda
-1}\,dt\leq CR^{\lambda }.
\end{align*}%
For the remaining case $r(z,y)<2R$, we have%
\begin{align*}
\int_{B_{z}(R)}r(x,y)^{-\eta p}\,dx& \leq \int_{B_{y}(3R)}r(x,y)^{-\eta
p}\,dx=\int_{S_{y}}\int_{0}^{3R}|tv|^{-\eta p}J(t,v)\,dt\,dv \\
& \leq C\int_{0}^{3R}t^{-\eta p}t^{\eta p+\lambda -1}\,dt\leq
C\int_{0}^{3R}t^{\lambda -1}\,dt\leq CR^{\lambda }.
\end{align*}%
Thus $r(\cdot ,y)^{-\eta }\in \mathcal{M}_{p,\lambda }$. On the other hand,
let $1\leq q<\infty $ and denote $\rho (t)=|B_{y}(t)|.$ Then
\begin{align*}
\int_{B_{y}(R)}r(x,y)^{-\eta q}\,dx& =\int_{S_{y}}\int_{0}^{R}|vt|^{-\eta
q}J(t,v)\,dt\,dv \\
& =\int_{0}^{R}t^{-\eta q}\int_{S_{y}}J(t,v)\,dv\,dt \\
& =\int_{0}^{R}t^{-\eta q}\frac{d}{dt}\rho (t)\,dt \\
& =R^{-\eta q}\rho (R)-\lim_{R\rightarrow 0}R^{-\eta q}\rho (R)+\eta
q\int_{0}^{R}t^{-\eta q-1}\rho (t)\,dt \\
& \geq CR^{-\eta q+m}-C\lim_{R\rightarrow 0}R^{-\eta q+m}+C\eta
q\int_{0}^{R}t^{-\eta q+m-1}\,dt.
\end{align*}%
Since the last integral diverges whenever $\eta =\frac{m-\lambda }{p}\geq
\frac{m}{q}$, we conclude that $r(\cdot ,y)^{-\eta }$ does not belong to $%
L^{q}(M)$; in particular, when $\frac{m-\lambda }{p}=\frac{m}{q}$, the
inclusion $L^{q}\subset \mathcal{M}_{p,\lambda }$ is proper.

\item[$(ii)$] (Nondecaying data) Let $M$ be a Riemannian manifold such that
either $(i)$ $\func{Ric}\geq -K(m-1),$ for some $K>0,$ and $\kappa \leq 0$;
or $(ii)$ $\func{Ric}\geq 0$ and $M$ has large volume growth. For $x_{0}\in
M $ and $n\in \mathbb{N}$, consider the functions
\begin{equation*}
\phi _{n}(x,x_{0})=%
\begin{cases}
\sin (2\pi nr(x,x_{0})), & \text{if }r(x,x_{0})\leq 1\text{;} \\
0, & \text{otherwise,}%
\end{cases}%
\end{equation*}%
where $r(x,x_{0})$ denotes the distance between $x$ and $x_{0}$ in $M$. We
have that the function $\phi (x)=\sum_{n=1}^{\infty }\phi _{n}(x,a_{n})$
belongs to the Morrey spaces $\mathcal{M}_{p,\lambda }^{g}(M)$ and $\mathcal{%
M}_{p,\lambda }(M)$, provided that $\{a_{n}\}\subset M$ is a sequence of
points such that $r(a_{n+1},a_{n})>2$ and $r(a_{n},x_{0})$ grows
sufficiently fast. Moreover, $\phi \notin L^{p}(M)$ for $p\neq \infty $, $%
\phi \in L^{\infty }(M),$ and it does not decay as $r(x,x_{0})\rightarrow
\infty ;$ that is, $\phi $ is a nondecaying function.
\end{enumerate}
\end{rem}


\section{Heat equation on non-compact manifolds}

\label{s3}

Estimates of the heat kernel on non-compact manifolds have been a central
topic of research. Among the most important results are those of Li and Yau
\cite{yau} for manifolds with bounded Ricci curvature. Improvements to these
results can be found in more recent works, such as Davies \cite{davies} and
Li and Zhen \cite{li}. These estimates have better decay with an exponential
term depending on the bottom of the Laplacian. Let us denote by $\lambda
_{1}\geq 0$ the bottom of the $L^{2}$-spectrum of $-\Delta _{g}\,$, defined
as
\begin{equation*}
\lambda _{1}=\inf_{f\in \mathcal{C}_{0}^{\infty }(M)}\dfrac{\int_{M}|\nabla
f|^{2}}{\int_{M}f^{2}}.
\end{equation*}

If $M$ has non-negative Ricci curvature, then $\lambda _{1}=0$. Furthermore,
for a complete manifold with $\func{Ric}\geq -K(m-1)$ for some $K>0$, we
have $\lambda _{1}\leq \frac{(m-1)^{2}K}{4}$ (see \cite{Cheng1975}). In the
case of Ricci curvature bounded from above, for a complete, simply
connected, $m$-dimensional Riemannian manifold $M$ with $\kappa \leq -\delta
\leq 0$ and $\func{Ric}\leq -c_{0}^{2}<0$, we have that (by \cite[Theorem 1]%
{setti})
\begin{equation*}
\lambda _{1}\geq \dfrac{1}{4}\left( c_{0}^{2}+(m-1)(m-2)\delta \right) .
\end{equation*}

The result of \cite[Theorem 3]{davies} currently provides the sharpest known
estimate of the heat kernel. More precisely, let $M$ be a complete
Riemannian manifold with $\func{Ric}\geq -K(m-1)$, for some $K\geq 0$. If $%
x,y\in M$ and $t>0$, then
\begin{equation}
0\leq G(t,x,y)\leq C(m,K)|B_{x}(s)|^{-\frac{1}{2}}|B_{y}(s)|^{-\frac{1}{2}%
}e^{-\lambda _{1}t-\frac{r^{2}}{4t}},  \label{g-davies}
\end{equation}%
where $r=r(x,y)$ denotes the Riemannian distance and $s=\min \left\{ 1,\sqrt{%
t},\frac{t}{r}\right\} .$ Moreover, under the additional assumption of
volume growth \eqref{n-vol}, the estimate can be simplified to
\begin{equation}
G(t,x,y)\leq C(m,K)s^{-n}e^{-\lambda _{1}t-\frac{r^{2}}{4t}}\leq C(m,K)\max {%
\left\{ 1,t^{-\frac{n}{2}},t^{-n}r^{n}\right\} }e^{-\lambda _{1}t-\frac{r^{2}%
}{4t}}.  \label{es-h1}
\end{equation}%
In particular, when $\kappa \leq 0$, we obtain \eqref{es-h1} with $n=m$. In
the case where $-K(m-1)\leq \func{Ric}\leq -c_{0}$ and $\kappa \leq 0$, it
follows from \eqref{e-vol} that
\begin{equation}
G(t,x,y)\leq Ce^{-\sqrt{c_{0}}s}e^{-\lambda _{1}t-\frac{r^{2}}{4t}}\leq
Ce^{-\lambda _{1}t-\frac{r^{2}}{4t}},  \label{es-h2}
\end{equation}%
for $t\geq t_{0}>0$ and some constant $C=C(m,t_{0},c_{0},K).$

From Grigor'yan \cite{Grigoryan1999},\cite{Grigoryan1994}, we have good
estimates for Cartan-Hadamard manifolds. Since the Riemannian manifold is
complete, non-compact, and simply connected with $\kappa \leq 0$
(Cartan-Hadamard manifold), then the heat kernel $G(t,x,y)$ satisfies the
estimate
\begin{equation}
G(t,x,y)\leq C\max \{1,t^{-\frac{m}{2}}\}e^{-\lambda _{1}t-\frac{r^{2}}{2Dt}%
},  \label{es-h3}
\end{equation}
for some $D>2$ and $C=C(m,D).$ For the case $-K(m-1)\leq \func{Ric}\leq
-c_{0}$ and $\kappa \leq 0$, we can combine \eqref{es-h2} with $t_{0}=1$ and %
\eqref{es-h3} for $0<t<1$, which yields
\begin{equation}
G(t,x,y)\leq C(m,c_{0},K)\max \{1,t^{-\frac{m}{2}}\}e^{-\lambda _{1}t-\frac{%
r^{2}}{4t}}.  \label{es-h4}
\end{equation}%
Also, if $\kappa \leq -\kappa _{0}$, then
\begin{equation*}
G(t,x,y)\leq C\max \{1,t^{-\frac{m}{2}}\}e^{-\frac{(m-1)^{2}}{4}\kappa _{0}t-%
\frac{r^{2}}{2Dt}}.
\end{equation*}

For the hyperbolic space $\mathbb{H}_{\kappa }^{m}$ with constant negative
curvature $-\kappa $, we have
\begin{equation}
G_{\kappa }(t,x,y)\asymp t^{-\frac{m}{2}}(1+r+t)^{\frac{m-3}{2}}(1+r)e^{-%
\frac{(m-1)^{2}}{4}\kappa t-\frac{r^{2}}{4t}-\frac{m-1}{2}\sqrt{\kappa }r}.
\label{es-hp}
\end{equation}%
By Debiard, Gaveau and Mazet \cite{Debiard1976}, if $M$ is simply connected
and $\kappa \leq -\kappa _{0}\leq 0$, then
\begin{equation}
G(t,x,y)\leq G_{\kappa _{0}}(t,x,y),\text{ for all }x,y\in M\text{.}
\label{sg-k0}
\end{equation}%
Furthermore, for
\begin{equation*}
\partial _{t}u^{\flat }+\Delta _{H}u^{\flat }=0\text{, }u\in \Gamma (TM),
\end{equation*}%
we have that (see \cite[Lemme 2.3]{Lohou})
\begin{equation}
|e^{t\Delta _{H}}u^{\flat }|\leq e^{t\Delta _{g}}|u|.  \label{hb}
\end{equation}%
Here, for a vector field $X\in \Gamma (TM)$, we denote by $X^{\flat }$ the $%
1 $-form defined by $X^{\flat }(Y)=g(X,Y)$ for all $Y\in \Gamma (TM)$. We
also use the convention $g(X^{\flat },Y^{\flat })=g(X,Y)$. Consequently,
when deriving dispersive estimates, it suffices to consider solutions for
functions $u:\mathbb{R}\times M\rightarrow \mathbb{R}$. For simplicity, in
the following results we will consider the estimates \eqref{es-h4} and %
\eqref{sg-k0}. To obtain the heat kernel estimate from $\mathcal{M}%
_{p,\lambda }^{g}$ to $\mathcal{M}_{q,\lambda }^{g}$ for $1\leq p\leq
q<\infty $, we first develop the $\mathcal{M}_{p,\lambda }^{g}\rightarrow
L^{\infty }$ estimate, using the pushforward measure, adapting an idea found
in \cite{Kato1992} to the setting of manifolds.

\begin{thr}[Dispersive: $\mathcal{M}_{p,\protect\lambda }^{g}\rightarrow
L^{\infty }$]
\label{th-mi} Let $(M,g)$ be a complete, simply connected, non-compact, $m$%
-dimensional Riemannian manifold without boundary. Let $p\in \lbrack
1,\infty )$ and $\lambda \in \lbrack 0,m)$ and denote by $u_{0}$ the initial
data for the heat equation.

\begin{enumerate}
\item[$(i)$] Assume that $-K(m-1)\leq \func{Ric}\leq -c_{0}<0$ and $\kappa
\leq 0,$ and let $u_{0}\in \mathcal{M}_{p,\lambda }^{g}(\Gamma (TM))$. Then,
for all $t>0$, the following estimate holds:
\begin{equation}
\Vert e^{t\Delta _{g}}u_{0}\Vert _{\infty }\leq Cd(t)^{-\frac{m}{2p}}t^{%
\frac{\lambda }{2p}}e^{-\frac{1}{p}\left[ \lambda _{1}-k\frac{\lambda }{m}%
\gamma \right] t}\Vert u_{0}\Vert _{g,p,\lambda },  \label{Thm-Disp-aux-1}
\end{equation}%
where $d(t)=\min \{1,t\}$, $\gamma =k\frac{\lambda }{m}\left( \frac{1}{4}%
-c\right) ^{-1}$, $k=\sqrt{K}(m-1)$, $0<c<\frac{1}{4},$ and $C=C(m,c_{0},K)$.

\item[$(ii)$] Assume that $\kappa \leq 0$ and let $u_{0}\in \mathcal{M}%
_{p,\lambda }(\Gamma (TM))$. Then, for all $t>0$, the following estimate
holds:
\begin{equation}
\Vert e^{t\Delta _{g}}u_{0}\Vert _{\infty }\leq Cd(t)^{-\frac{m}{2p}}t^{%
\frac{\lambda }{2p}}e^{-\frac{\lambda _{1}}{p}t}\Vert u_{0}\Vert _{p,\lambda
},  \label{Thm-Disp-aux-2}
\end{equation}%
where $d(t)=\min \{1,t\}$ and $C=C(m)$.
\end{enumerate}
\end{thr}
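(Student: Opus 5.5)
We reduce to a scalar function $f=|u_0|$, since the heat kernel is nonnegative, and bound
\[
|e^{t\Delta_g}u_0(x)|\le \int_M G(t,x,y)f(y)\,dy .
\]
The plan is to apply Hölder's inequality in the form
\[
\int G f\le \Big(\int G\Big)^{1-1/p}\Big(\int G f^p\Big)^{1/p},
\]
and to use $\int_M G(t,x,y)\,dy\le 1$, which follows from stochastic incompleteness or completeness together with the maximum principle. This reduces the problem to $p=1$ with $f$ replaced by $f^p\in\mathcal M^g_{1,\lambda}$, whose norm is $\|u_0\|_{g,p,\lambda}^p$. The exponents $1/p$ in \eqref{Thm-Disp-aux-1} are consistent with this reduction, since the prefactor $d(t)^{-m/2}t^{\lambda/2}e^{-[\lambda_1-k\frac{\lambda}{m}\gamma]t}$ for $p=1$ becomes its $1/p$ power. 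Note that we should keep the $e^{-\lambda_1 t}$ factor from the kernel bound rather than discard it via $\int G\le1$: apply Hölder to $G=G^{1/p'}G^{1/p}$, bound the first factor by $1$, and keep the full pointwise kernel in the second.

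For $p=1$, fix $x$ and write $\mu(R)=\int_{B_x(R)}h$ with $h=f^p$, so that $\mu(R)\le \|h\|_{g,1,\lambda}|B_x(R)|^{\lambda/m}$. Using \eqref{es-h4},
\[
\int G h\le C\max\{1,t^{-m/2}\}e^{-\lambda_1 t}\int_0^\infty e^{-R^2/4t}\,d\mu(R),
\]
which is the pushforward-measure idea of Kato. Integrating by parts, with the boundary terms vanishing by the exponential Gaussian decay against the volume growth from Lemma \ref{lem-v}, this equals
\[
\int_0^\infty \frac{R}{2t}e^{-R^2/4t}\mu(R)\,dR\le \|h\|\int_0^\infty \frac{R}{2t}e^{-R^2/4t}|B_x(R)|^{\lambda/m}\,dR .
\]

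Next insert \eqref{eq-ball} with $n=m$, giving $|B_x(R)|^{\lambda/m}\le CR^\lambda e^{k\frac{\lambda}{m}R}$. To absorb the exponential, write $R^2/4t=cR^2/t+(\frac14-c)R^2/t$ and complete the square:
\[
-\Big(\tfrac14-c\Big)\frac{R^2}{t}+k\frac{\lambda}{m}R\le \frac{(k\lambda/m)^2}{4(\frac14-c)}\,t .
\]
This produces the factor $e^{k\frac{\lambda}{m}\gamma t}$ up to the constant in $\gamma$. The remaining integral $\int_0^\infty \frac{R}{t}R^\lambda e^{-cR^2/t}\,dR=C t^{\lambda/2}$ by scaling. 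Combined with $\max\{1,t^{-m/2}\}=d(t)^{-m/2}$, this gives \eqref{Thm-Disp-aux-1}. For $t\le1$ one could instead use the polynomial volume bound, but the above works uniformly.

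Part (ii) is identical with $\mathcal M_{p,\lambda}$, whose weight is $R^\lambda$ directly, so no exponential volume factor appears and no $\gamma$ correction is needed. Under $\kappa\le0$ alone we use Grigor'yan's bound \eqref{es-h3}, with $e^{-r^2/2Dt}$ in place of $e^{-r^2/4t}$, which only changes constants. The main obstacle is the careful bookkeeping of the constant $\gamma$ in the Gaussian-versus-exponential-volume trade-off, together with justifying the integration by parts, which requires $\mu(R)e^{-R^2/4t}\to0$; the latter follows from Lemma \ref{lem-v}.
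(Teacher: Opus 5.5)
Your proposal is correct and follows the paper's proof essentially step by step. It uses Hölder to get $|u(t,x)|^p\le\int_M G(t,x,y)|u_0(y)|^p\,dy$ while keeping the factor $e^{-\lambda_1 t}$ from \eqref{es-h4} (respectively \eqref{es-h3} in part (ii)), Kato's pushforward measure $\rho(r)=\int_{B_x(r)}|u_0|^p$, integration by parts, and the volume bound \eqref{eq-ball} with $n=m$.

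The only deviation is cosmetic. The paper splits the $r$-integral at $r=\gamma t$, using $-\frac{r^2}{4t}+k\frac{\lambda}{m}r\le -c\frac{r^2}{t}$ for $r\ge\gamma t$. You complete the square instead, which gives the slightly better exponent $\frac14 k\frac{\lambda}{m}\gamma t$ and therefore implies \eqref{Thm-Disp-aux-1}.
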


\noindent {\textbf{Proof.}} Denote $u=e^{t\Delta _{g}}u_{0}.$ For the first
item, we apply H\"{o}lder's inequality to the Green representation formula %
\eqref{green}, obtaining
\begin{equation*}
|u(t,x)|^{p}\leq \int_{M}G(t,x,y)|u_{0}(y)|^{p}\,dy.
\end{equation*}%
Using the heat kernel estimate \eqref{es-h4} and passing to the radial
(pushforward) measure, we deduce
\begin{equation}
|u(t,x)|^{p}\leq C\max \left\{ 1,t^{-\frac{m}{2}}\right\} \int_{0}^{\infty
}e^{-\lambda _{1}t-\frac{r^{2}}{4t}}\,d\rho (r),  \label{aux-proof-1}
\end{equation}%
where $\rho (r)=\int_{B_{x}(r)}|u_{0}(y)|^{p}\,dy$ and $B_{x}(r)$ denotes
the geodesic ball of radius $r$ and centered at $x$.

Integrating by parts, observing that $|B_{x}(r)|^{-\frac{\lambda }{m}}\rho
(r)\leq \Vert u_{0}\Vert _{g,p,\lambda }^{p}$, using the volume estimate for
geodesic balls \eqref{eq-ball}, and inequality $-\frac{r^{2}}{4t}+k\frac{%
\lambda }{m}r\leq -c\frac{r^{2}}{t}$ for $r\geq \gamma t$, we can estimate
\begin{align*}
\int_{0}^{\infty }e^{-\lambda _{1}t-\frac{r^{2}}{4t}}\,d\rho (r)& \leq
e^{-\lambda _{1}t}\int_{0}^{\infty }e^{-\frac{r^{2}}{4t}}\,d\rho (r) \\
& =e^{-\lambda _{1}t}\left[ e^{-\frac{r^{2}}{4t}}\rho (r)\bigg|_{0}^{\infty
}+\int_{0}^{\infty }\dfrac{r}{2t}e^{-\frac{r^{2}}{4t}}\rho (r)\,dr\right] \\
& \leq Ce^{-\lambda _{1}t}\Vert u_{0}\Vert _{g,p,\lambda
}^{p}\int_{0}^{\infty }\dfrac{r^{\lambda +1}}{2t}e^{-\frac{r^{2}}{4t}+k\frac{%
\lambda }{m}r}\,dr \\
& \leq Ce^{-\lambda _{1}t}\Vert u_{0}\Vert _{g,p,\lambda }^{p}\left[
\int_{0}^{\gamma t}\dfrac{r^{\lambda +1}}{2t}e^{-\frac{r^{2}}{4t}+k\frac{%
\lambda }{m}r}\,dr+\int_{\gamma t}^{\infty }\dfrac{r^{\lambda +1}}{2t}e^{-%
\frac{r^{2}}{4t}+k\frac{\lambda }{m}r}\,dr\right] \\
& \leq Ce^{-\lambda _{1}t}\Vert u_{0}\Vert _{g,p,\lambda }^{p}\left[ e^{k%
\frac{\lambda }{m}\gamma t}\int_{0}^{\gamma t}\dfrac{r^{\lambda +1}}{2t}e^{-%
\frac{r^{2}}{4t}}\,dr+\int_{\gamma t}^{\infty }\dfrac{r^{\lambda +1}}{2t}%
e^{-c\frac{r^{2}}{t}}\,dr\right] \\
& \leq Ct^{\frac{\lambda }{2}}e^{-\lambda _{1}t+k\frac{\lambda }{m}\gamma
t}\Vert u_{0}\Vert _{g,p,\lambda }^{p}.
\end{align*}%
In this way, we obtain
\begin{equation*}
|u(t,x)|\leq Cd(t)^{-\frac{m}{2p}}t^{\frac{\lambda }{2p}}e^{-\frac{1}{p}%
\left[ \lambda _{1}-k\frac{\lambda }{m}\gamma \right] t}\Vert u_{0}\Vert
_{g,p,\lambda },\text{ with }d(t)=\max \{1,t\}.
\end{equation*}%
Taking the essential supremum over $x\in M$ on the left-hand side then
yields the estimate stated in (\ref{Thm-Disp-aux-1}).

For the second item, let $u_{0}\in \mathcal{M}_{p,\lambda }.$ Using the
estimate \eqref{es-h3}, we have
\begin{equation}
|u(t,x)|^{p}\leq C\max \left\{ 1,t^{-\frac{m}{2}}\right\} \int_{0}^{\infty
}e^{-\lambda _{1}t-\frac{r^{2}}{2Dt}}\,d\rho (r).  \label{aux-proof-2}
\end{equation}%
Moreover, the integral can be estimated as follows:
\begin{align*}
\int_{0}^{t}e^{-\lambda _{1}t-\frac{r^{2}}{2Dt}}\,d\rho (r)& \leq
\int_{0}^{\infty }e^{-\lambda _{1}t-\frac{r^{2}}{2Dt}}\,d\rho (r) \\
& \leq C\dfrac{e^{-\lambda _{1}t}}{Dt}\Vert u_{0}\Vert _{p,\lambda
}^{p}\int_{0}^{\infty }r^{\lambda +1}e^{-\frac{r^{2}}{2Dt}}\,dr \\
& \leq Ce^{-\lambda _{1}t}t^{\frac{\lambda }{2}}\Vert u_{0}\Vert _{p,\lambda
}^{p}.
\end{align*}%
Finally, combining this estimate with (\ref{aux-proof-2}), we obtain the
desired pointwise bound%
\begin{equation*}
|u(t,x)|\leq d(t)^{-\frac{m}{2p}}t^{\frac{\lambda }{2p}}e^{-\frac{\lambda
_{1}}{p}t}\Vert u_{0}\Vert _{p,\lambda }.
\end{equation*}%
\fin

Different from the Euclidean case (\cite{Kato1992}), we do not have the
convolution property to get the estimate between the Morrey spaces.
Furthermore, we need exponential decay to prove the boundedness of the Riesz
operator (see Theorem \ref{th-ri}); consequently, in our setting, we need to
analyze the heat kernel more deeply.

\begin{thr}[Dispersive: $\mathcal{M}_{p,\protect\lambda }^{g}\rightarrow
\mathcal{M}_{p,\protect\lambda }^{g}$]
\label{th-mm} Let $(M,g)$ be a complete, simply connected, non-compact, $m$%
-dimensional Riemannian manifold without boundary. Let $p\in \lbrack
1,\infty )$ and $\lambda \in \lbrack 0,m)$ and denote by $u_{0}$ the initial
data for the heat equation.

\begin{enumerate}
\item[$(i)$] Assume that $-K(m-1)\leq \func{Ric}\leq -c_{0}<0$ and $\kappa
\leq 0,$ and let $u_{0}\in \mathcal{M}_{p,\lambda }^{g}(\Gamma (TM))$. Then,
for all $t>0$, the following estimate holds:
\begin{equation*}
\Vert e^{t\Delta _{g}}u_{0}\Vert _{g,p,\lambda }\leq Ce^{-\alpha _{p}t+k%
\frac{\lambda }{mp}\gamma _{g}t}\Vert u_{0}\Vert _{g,p,\lambda },
\end{equation*}%
where $k=\sqrt{K}(m-1),$ $\alpha _{p}=\min \left\{ \frac{4\delta _{m}(p-1)}{%
p^{2}},\frac{\lambda _{1}}{p}\right\} $, $\delta _{m}\geq \frac{1}{4}\left(
c_{0}^{2}+(m-1)(m-2)\kappa ^{\ast }\right) ,$ $\kappa ^{\ast
}=\sup_{M}\kappa ,$ $\gamma _{g}=k\left( 1+\frac{\lambda }{m}\right) \left(
\frac{1}{4}-c\right) ^{-1},$ $0<c<\frac{1}{4}$, and $C=C(m,c_{0},K)$.

\item[$(ii)$] Assume that $\func{Ric}\geq -K(m-1)$ and $\kappa \leq 0$, and
let $u_{0}\in \mathcal{M}_{p,\lambda }(\Gamma (TM)).$ Then, for all $t>0$,
the following estimate holds:
\begin{equation*}
\Vert e^{t\Delta _{g}}u_{0}\Vert _{p,\lambda }\leq C\left( 1+\gamma t\right)
^{\frac{\lambda }{p}}e^{-\alpha _{p}t}\Vert u_{0}\Vert _{p,\lambda },
\end{equation*}%
where $\gamma =k\frac{\lambda }{m}\left( \frac{1}{4}-c\right) ^{-1}$, $0<c<%
\frac{1}{4}$, and $C=C(m,K).$
\end{enumerate}
\end{thr}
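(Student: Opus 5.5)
We will fix a ball $B_{z}(R)$ and bound $\int_{B_{z}(R)}|e^{t\Delta _{g}}u_{0}|^{p}$ by $|B_{z}(R)|^{\lambda /m}$ times the right-hand side, splitting into the two regimes $R\leq \sqrt{t}$ (or $R$ comparable to a time-dependent scale) and $R$ large. As in Theorem \ref{th-mi}, we first apply H\"{o}lder's inequality with respect to the measure $G(t,x,y)\,dy$, using $\int_{M}G(t,x,y)\,dy\leq 1$:
\begin{equation*}
|u(t,x)|^{p}\leq \int_{M}G(t,x,y)|u_{0}(y)|^{p}\,dy .
\end{equation*}
To obtain exponential decay $e^{-\alpha _{p}t}$ we will instead combine this with the $L^{p}$ contraction of the semigroup. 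On a manifold with $\lambda _{1}>0$, the standard Stroock/Varopoulos argument gives $\Vert e^{t\Delta _{g}}\Vert _{L^{p}\rightarrow L^{p}}\leq e^{-\frac{4(p-1)}{p^{2}}\lambda _{1}t}$, and McKean-type bounds $\lambda _{1}\geq \delta _{m}$ hold under the curvature assumptions (the bound of \cite{setti}). This is the source of the $\frac{4\delta _{m}(p-1)}{p^{2}}$ term in $\alpha _{p}$; the term $\lambda _{1}/p$ comes from the kernel factor $e^{-\lambda _{1}t}$ in \eqref{es-h4}.

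\textbf{Local piece.} We split $u_{0}=u_{0}\chi _{B_{z}(2R+\ell )}+u_{0}\chi _{B_{z}(2R+\ell )^{c}}=:u_{1}+u_{2}$, with a buffer $\ell $ to be chosen, comparable to $\gamma _{g}t$. For $u_{1}$ we use the $L^{p}$ decay:
\begin{equation*}
\int_{B_{z}(R)}|e^{t\Delta _{g}}u_{1}|^{p}\leq e^{-p\alpha _{p}t}\int_{B_{z}(2R+\ell )}|u_{0}|^{p}\leq e^{-p\alpha _{p}t}|B_{z}(2R+\ell )|^{\lambda /m}\Vert u_{0}\Vert _{g,p,\lambda }^{p}.
\end{equation*}
Lemma \ref{l-comp} then gives $|B_{z}(2R+\ell )|^{\lambda /m}\leq C|B_{z}(R)|^{\lambda /m}e^{k\frac{\lambda }{m}(R+\ell )}$ when $R$ is bounded below. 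For $R$ small we would use the polynomial form, which gives only a constant factor. The dangerous factor $e^{k\frac{\lambda }{m}R}$ for large $R$ must be avoided. We would therefore decompose $B_{z}(R)$ into a bounded-overlap cover by unit balls (bounded geometry) and treat each small ball separately, so that only a ball of radius $\sim \ell $ is enlarged. We expect this part to be delicate.

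\textbf{Far piece.} For $x\in B_{z}(R)$ and $y\notin B_{z}(2R+\ell )$ we have $r(x,y)\geq \tfrac{1}{2}r(z,y)+\ell /2$. We use the kernel bound \eqref{es-h4} and pass to the radial measure $d\rho (r)$, $\rho (r)=\int_{B_{z}(r)}|u_{0}|^{p}$, exactly as in Theorem \ref{th-mi}. After integrating by parts, and using $\rho (r)\leq C r^{\lambda }e^{k\frac{\lambda }{m}r}\Vert u_{0}\Vert ^{p}$ from \eqref{eq-ball}, the integrand contains $e^{-\frac{r^{2}}{4t}+k(1+\frac{\lambda }{m})r}$. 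The extra factor $e^{kr}$ (rather than $e^{k\frac{\lambda }{m}r}$) comes from integrating over $x\in B_{z}(R)$, whose volume is $\lesssim e^{kR}$ and is dominated by $r$. We split the radial integral at $r=\gamma _{g}t$. On $r\geq \gamma _{g}t$ we use $-\frac{r^{2}}{4t}+k(1+\frac{\lambda }{m})r\leq -c\frac{r^{2}}{t}$, which is Gaussian and harmless. On $r\leq \gamma _{g}t$ we bound the exponential by $e^{k(1+\frac{\lambda }{m})\gamma _{g}t}$. However, the claimed rate has only $k\frac{\lambda }{mp}\gamma _{g}$. We will recover it by choosing $\ell =\gamma _{g}t$, so that the near region is absorbed into $u_{1}$, and by taking $p$-th roots. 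The small-time factor $\max \{1,t^{-m/2}\}$ is compensated by $\int r\,e^{-r^{2}/(4t)}dr/t$ as in Theorem \ref{th-mi}, together with $|B_{z}(R)|\gtrsim R^{m}$ from \eqref{l-ball}.

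\textbf{Expected obstacle.} The main difficulty is keeping the geometric factors from both pieces within $e^{k\frac{\lambda }{mp}\gamma _{g}t}$, uniformly in $R$, while obtaining the rate $\min \{\cdot ,\cdot \}=\alpha _{p}$. First we would try the bounded-overlap unit-ball decomposition together with Lemma \ref{l-comp} with $c=1$. For $\mathcal{M}_{p,\lambda }$ in item $(ii)$, the same scheme with $A=R^{m}$ avoids the exponential volume comparison. The enlargement $2R+\gamma t$ versus $R$ then yields only the polynomial factor $(1+\gamma t)^{\lambda /p}$.
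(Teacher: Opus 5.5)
Your plan has a genuine gap, and it sits where you expected it. Doubling the near region to $B_z(2R+\ell)$ is what lets you treat the far piece with a radial measure centred at $z$, via $r(x,y)\ge \tfrac12 r(z,y)+\tfrac{\ell}{2}$. Under exponential volume growth, however, it makes the near piece cost $\bigl(|B_z(2R+\ell)|/|B_z(R)|\bigr)^{\lambda/m}\approx e^{k\frac{\lambda}{m}(R+\ell)}$, which is unbounded in $R$.

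The unit-ball cover does not repair this. The near pieces are harmless: by positivity of $G$, each is dominated pointwise by $e^{t\Delta_g}(|u_0|\chi_{B_z(R+2+\ell)})$. But each far piece, estimated through a radial measure centred at its own $z_i$, is only bounded by a multiple of $\Vert u_0\Vert_{g,p,\lambda}^p$. Summing over the roughly $|B_z(R)|$ unit balls then produces a factor $|B_z(R)|$ instead of $|B_z(R)|^{\lambda/m}$. This can only be avoided by interchanging the sum over balls with the $y$-integral, which is precisely the step you are missing.

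There is a second problem as $R\to 0$. With a fixed buffer $\ell\sim\gamma_g t$, the ratio $\bigl(|B_z(2R+\ell)|/|B_z(R)|\bigr)^{\lambda/m}$ is genuinely of order $(\ell/R)^{\lambda}$ (use $|B_z(\rho)|\ge c\rho^m$ from $\kappa\le 0$). So the claim that the polynomial form of Lemma \ref{l-comp} gives ``only a constant factor'' fails. The enlargement must be proportional to $R$. The paper does this for $R\le 1$: it uses $B_{x_0}((1+\varepsilon)R)$ and handles the far piece with Kato's weights $r(x,x_0)^{-(m-\lambda)}r(x,y)^{m-\lambda}$.

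The missing idea is an interchange of integrations that makes any comparison of $r(x,y)$ with the distance to the centre unnecessary, so no doubling is needed. For $R>1$ the paper splits at $r(x,y)=\gamma_g t$. The near part is dominated by $e^{t\Delta_g}(|u_0|\chi_{B_{x_0}(R+\gamma_g t)})$. The $L^p$ decay together with \eqref{comp} (valid since $R>1$) then gives exactly $Ce^{-\frac{4\delta_m(p-1)}{p^2}t+k\frac{\lambda}{mp}\gamma_g t}|B_{x_0}(R)|^{\frac{\lambda}{mp}}\Vert u_0\Vert_{g,p,\lambda}$. For the far part one writes $G\le F(t,r(x,y))$ and integrates by parts in $r$ with $\rho_x(r)=\int_{B_x(r)}|u_0|^p$, centred at $x$ rather than at $x_0$. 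One then applies Fubini:
\begin{equation*}
\int_{B_{x_0}(R)}\rho_x(r)\,dx=\int_M |u_0(z)|^p\,|B_{x_0}(R)\cap B_z(r)|\,dz\leq \sup_{z}|B_z(r)|\int_{B_{x_0}(R+r)}|u_0|^p\leq Cr^n e^{k(1+\frac{\lambda}{m})r}|B_{x_0}(R)|^{\frac{\lambda}{m}}\Vert u_0\Vert_{g,p,\lambda}^p .
\end{equation*}
The factor $e^{kr}$ comes from $|B_z(r)|$, not from the volume of $B_{x_0}(R)$. It is absorbed by $e^{-r^2/(4t)}$ on $r\geq\gamma_g t$, which is exactly why $\gamma_g$ carries the factor $1+\frac{\lambda}{m}$. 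Choosing $n=m$ for $t\le 1$ and $n=0$ for $t>1$ compensates $d(t)^{-m/2}$. The resulting bound $Ce^{-\lambda_1 t}|B_{x_0}(R)|^{\lambda/m}\Vert u_0\Vert_{g,p,\lambda}^p$ for the $p$-th power of the far part is uniform in $R>1$, and combined with the near part it yields the rate $\alpha_p$. In item $(ii)$ the normalisation $R^m$ makes your doubling harmless for $R\ge1$, so there only the small-$R$ issue remains.
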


\noindent {\textbf{Proof.}} We begin by noting that for $u_{0}\in \mathcal{M}%
_{p,\lambda }^{g}(\Gamma (TM))$ (item $(i)$), we employ the heat kernel
estimate \eqref{es-h4}, whereas for $u_{0}\in \mathcal{M}_{p,\lambda
}(\Gamma (TM))$ (item $(ii)$), we use the estimate \eqref{es-h3}.

For the case $R\leq 1$, we analyze separately the terms $e^{t\Delta
_{g}}u_{0}\chi _{B_{x}(R+\varepsilon R)}$ and $e^{t\Delta _{g}}u_{0}\left(
1-\chi _{B_{x}(R+\varepsilon R)}\right) $ for some fixed $\varepsilon >0.$
It follows that
\begin{align*}
\left( \int_{B_{x_{0}}(R)}|u(t,x)|^{p}\,dx\right) ^{\frac{1}{p}}& \leq
\left( \int_{B_{x_{0}}(R)}\bigg|\int_{M}G(t,x,y)u_{0}(y)\,dy\bigg|%
^{p}\,dx\right) ^{\frac{1}{p}} \\
& \leq \left( \int_{B_{x_{0}}(R)}\bigg|\int_{M}G(t,x,y)u_{0}(y)\chi
_{B_{x}(\varepsilon R)}(y)\,dy\bigg|^{p}\,dx\right) ^{\frac{1}{p}}+ \\
& \hspace{2cm}+\left( \int_{B_{x_{0}}(R)}\bigg|\int_{M\smallsetminus
B_{x}(\varepsilon R)}G(t,x,y)u_{0}(y)\,dy\bigg|^{p}\,dx\right) ^{\frac{1}{p}}
\\
& =I_{1}+I_{2}.
\end{align*}%
For the first integral $I_{1}$, using the $L^{p}$-estimates for the
semigroup $e^{t\Delta _{g}}$ (see \cite[Theorem 4.1 and Proposition 4.6]%
{Pierfelice2017}), we obtain
\begin{align}
I_{1}\leq \Vert e^{t\Delta _{g}}u_{0}\chi _{B_{x_{0}}(R+\varepsilon R)}\Vert
_{p}& \leq Ce^{-\frac{4\delta _{m}(p-1)}{p^{2}}t}\Vert u_{0}\chi
_{B_{x_{0}}(R+\varepsilon R)}\Vert _{p}  \notag \\
& \leq Ce^{-\frac{4\delta _{m}(p-1)}{p^{2}}t}R^{\frac{\lambda }{p}%
}(1+\varepsilon )^{\frac{\lambda }{p}}\Vert u_{0}\Vert _{g,p,\lambda }
\notag \\
& \leq Ce^{-\frac{4\delta _{m}(p-1)}{p^{2}}t}B_{x_{0}}(R)^{\frac{\lambda }{mp%
}}(1+\varepsilon )^{\frac{\lambda }{p}}\Vert u_{0}\Vert _{g,p,\lambda },
\label{eq-mm-00}
\end{align}%
where the parameter $\delta _{m}\geq \frac{1}{4}\left(
c_{0}^{2}+(m-1)(m-2)\kappa ^{\ast }\right) $ with $\kappa ^{\ast
}=\sup_{M}\kappa $. Note that in the case $u_{0}\in \mathcal{M}_{p,\lambda }$%
, we alternatively have
\begin{equation}
I_{1}\leq Ce^{-\frac{4\delta _{m}(p-1)}{p^{2}}t}R^{\frac{\lambda }{p}%
}(1+\varepsilon )^{\frac{\lambda }{p}}\Vert u_{0}\Vert _{p,\lambda }.
\label{eq-mm-00-item2}
\end{equation}

For the second integral $I_{2}$, we introduce the functions $%
r(x,x_{0})^{-(m-\lambda )}$ and $r(x,x_{0})^{m-\lambda }$. Observing that $%
r(x,y)>\varepsilon R>\varepsilon r(x,x_{0})$, we can estimate, for all $%
t\leq 1,$
\begin{align}
I_{2}^{p}& \leq \int_{B_{x_{0}}(R)}r(x,x_{0})^{-(m-\lambda
)}\int_{M\smallsetminus B_{x}(\varepsilon R)}r(x,x_{0})^{m-\lambda
}G(t,x,y)|u_{0}(y)|^{p}\,dy\,dx  \notag \\
& \leq C\varepsilon ^{-(m-\lambda
)}\int_{B_{x_{0}}(R)}r(x,x_{0})^{-(m-\lambda )}\int_{M\smallsetminus
B_{x}(\varepsilon R)}r(x,y)^{m-\lambda }G(t,x,y)|u_{0}(y)|^{p}\,dy\,dx
\notag \\
& \leq C\varepsilon ^{-(m-\lambda )}d(t)^{-\frac{m}{2}}e^{-\lambda
t}\int_{B_{x_{0}}(R)}r(x,x_{0})^{-(m-\lambda )}\int_{\varepsilon R}^{\infty
}r^{m-\lambda }e^{-\frac{r^{2}}{4t}}\,d\rho _{x}(r)\,dx  \notag \\
& \leq C\varepsilon ^{-(m-\lambda )}d(t)^{-\frac{m}{2}}e^{-\lambda
t}\int_{B_{x_{0}}(R)}r(x,x_{0})^{-(m-\lambda )}\int_{0}^{\infty }\left(
\dfrac{r^{m-\lambda +1}}{2t}-(m-\lambda )r^{m-\lambda -1}\right) e^{-\frac{%
r^{2}}{4t}}\,\rho _{x}(r)\,dr\,dx  \notag \\
& \leq C\varepsilon ^{-(m-\lambda )}d(t)^{-\frac{m}{2}}e^{-\lambda
t}\int_{B_{x_{0}}(R)}r(x,x_{0})^{-(m-\lambda )}\int_{0}^{\infty }\dfrac{%
r^{m-\lambda +1}}{2t}e^{-\frac{r^{2}}{4t}}\,\rho _{x}(r)\,dr\,dx  \notag \\
& \leq C\varepsilon ^{-(m-\lambda )}d(t)^{-\frac{m}{2}}e^{-\lambda t}\Vert
u_{0}\Vert _{g,p,\lambda }^{p}\int_{B_{x_{0}}(R)}r(x,x_{0})^{-(m-\lambda
)}\,dx\int_{0}^{\infty }\dfrac{r^{m+1}}{2t}e^{-\frac{r^{2}}{4t}+k\frac{%
\lambda }{m}r}\,dr  \notag \\
& \leq C\varepsilon ^{-(m-\lambda )}e^{-\lambda _{1}t+k\frac{\lambda }{m}%
\gamma t}B_{x_{0}}(R)^{\frac{\lambda }{m}}\Vert u_{0}\Vert _{g,p,\lambda
}^{p},  \notag
\end{align}%
where $\gamma =k\frac{\lambda }{m}\left( \frac{1}{4}-c\right) ^{-1}$ with $%
0<c<\frac{1}{4}$. For $t>1$, using $R\leq 1$, we have
\begin{equation*}
I_{2}^{p}\leq \int_{B_{x_{0}}(R)}r(x,x_{0})^{-(m-\lambda
)}\int_{M\smallsetminus B_{x}(\varepsilon R)}r(x,x_{0})^{-\lambda
}G(t,x,y)|u_{0}(y)|^{p}\,dy\,dx.
\end{equation*}%
Proceeding analogously to the estimates above, we then obtain
\begin{equation}
I_{2}^{p}\leq C\varepsilon ^{-(m-\lambda )}e^{-\lambda _{1}t+k\frac{\lambda
}{m}\gamma t}B_{x_{0}}(R)^{\frac{\lambda }{m}}\Vert u_{0}\Vert _{g,p,\lambda
}^{p}.  \label{eq-mm-0}
\end{equation}%
In the case $u_{0}\in \mathcal{M}_{p,\lambda }$ (item $(ii)$), we get $%
I_{2}^{p}\leq C\varepsilon ^{-(m-\lambda )}e^{-\lambda _{1}t}R^{{\lambda }%
}\Vert u_{0}\Vert _{p,\lambda }^{p}$.

By \eqref{eq-mm-00} and \eqref{eq-mm-0}, we conclude that for $R\leq 1$
\begin{equation}
\left( B_{x_{0}}(R)^{-\frac{\lambda }{m}}\int_{B_{x_{0}}(R)}|u(t,x)|^{p}\,dx%
\right) ^{\frac{1}{p}}\leq Ce^{-\alpha _{p}t+k\frac{\lambda }{pm}\gamma
t}\Vert u_{0}\Vert _{g,p,\lambda },  \label{eq-mm-0p}
\end{equation}%
where $\alpha _{p}=\min \left\{ \frac{4\delta _{m}(p-1)}{p^{2}},\frac{%
\lambda _{1}}{p}\right\} $. Analogously, for $u_{0}\in \mathcal{M}%
_{p,\lambda }$, we have
\begin{equation*}
\left( R^{-\lambda }\int_{B_{x_{0}}(R)}|u(t,x)|^{p}\,dx\right) ^{\frac{1}{p}%
}\leq Ce^{-\alpha _{p}t}\Vert u_{0}\Vert _{p,\lambda }.
\end{equation*}

For $R>1$, fix $\gamma _{g}>0$ and split the integral as
\begin{align*}
\left( \int_{B_{x_{0}}(R)}|u(t,x)|^{p}\,dx\right) ^{\frac{1}{p}}& \leq
\left( \int_{B_{x_{0}}(R)}\bigg|\int_{M}G(t,x,y)u_{0}(y)\,dy\bigg|%
^{p}\,dx\right) ^{\frac{1}{p}} \\
& \leq \left( \int_{B_{x_{0}}(R)}\bigg|\int_{M}G(t,x,y)u_{0}(y)\chi
_{B_{x}(\gamma _{g}t)}(y)\,dy\bigg|^{p}\,dx\right) ^{\frac{1}{p}}+ \\
& \hspace{2cm}+\left( \int_{B_{x_{0}}(R)}\bigg|\int_{M\smallsetminus
B_{x}(\gamma _{g}t)}G(t,x,y)u_{0}(y)\,dy\bigg|^{p}\,dx\right) ^{\frac{1}{p}}
\\
& =I_{3}+I_{4}.
\end{align*}%
For the first integral $I_{3}$, we use the $L^{p}$ results (see \cite[%
Theorem 4.1 and Proposition 4.6]{Pierfelice2017}) and the volume comparison %
\eqref{comp} (see \cite[Lemma 2.1]{li}) to estimate
\begin{align}
I_{3}\leq \Vert e^{t\Delta _{g}}u_{0}\chi _{B_{x_{0}}(R+\gamma _{g}t)}\Vert
_{p}& \leq Ce^{-\frac{4\delta _{m}(p-1)}{p^{2}}t}\Vert u_{0}\chi
_{B_{x_{0}}(R+\gamma _{g}t)}\Vert _{p}  \notag \\
& \leq Ce^{-\frac{4\delta _{m}(p-1)}{p^{2}}t}|B_{x_{0}}(R+\gamma _{g}t)|^{%
\frac{\lambda }{mp}}\Vert u_{0}\Vert _{g,p,\lambda }  \notag \\
& \leq Ce^{-\frac{4\delta _{m}(p-1)}{p^{2}}t}|B_{x_{0}}(R)|^{\frac{\lambda }{%
mp}}e^{k\frac{\lambda }{mp}\gamma _{g}t}\Vert u_{0}\Vert _{g,p,\lambda }.
\label{eq-mm1}
\end{align}%
In the case $u_{0}\in \mathcal{M}_{p,\lambda },$ this estimate simplifies to
$I_{3}\leq Ce^{-\frac{4\delta _{m}(p-1)}{p^{2}}t}R^{\frac{\lambda }{p}%
}\left( 1+\gamma t\right) ^{\frac{\lambda }{p}}\Vert u_{0}\Vert _{p,\lambda
} $.

For the second integral $I_{4}$, note that the heat kernel can be estimate
by a function $F(t,r(x,y))$ that depends on the geodesic distance $r(x,y)$.
Using this estimate and H\"{o}lder's inequality, we obtain
\begin{align*}
I_{4}^{p}& \leq \int_{B_{x_{0}}(R)}\int_{M\smallsetminus B_{x}(\gamma
_{g}t)}G(t,x,y)|u_{0}(y)|^{p}\,dy\,dx \\
& \leq \int_{B_{x_{0}}(R)}\int_{M\smallsetminus B_{x}(\gamma
_{g}t)}F(t,r(x,y))|u_{0}(y)|^{p}\,dy\,dx \\
& =\int_{B_{x_{0}}(R)}\int_{\gamma _{g}t}^{\infty }F(t,r)\,d\rho _{x}(r)\,dx,
\end{align*}%
where $\rho _{x}(r)=\int_{B_{x}(r)}|u_{0}(z)|^{p}\,dz$. It then follows that
\begin{align*}
I_{4}^{p}& \leq \int_{B_{x_{0}}(R)}\left[ F(t,r)\rho _{x}(r)\bigg|_{\gamma
_{g}t}^{\infty }+\int_{\gamma _{g}t}^{\infty }\left( -dF(t,r)\right) \,\rho
_{x}(r)\,dr\right] \,dx \\
& \leq \int_{\gamma _{g}t}^{\infty }\left( -dF(t,r)\right)
\int_{B_{x_{0}}(R)}\rho _{x}(r)\,dx\,dr.
\end{align*}%
Now, note that
\begin{align*}
\int_{B_{x_{0}}(R)}\rho _{x}(r)\,dx&
=\int_{B_{x_{0}}(R)}\int_{B_{x}(r)}|u_{0}(z)|^{p}\,dz\,dx \\
& =\int_{B_{x_{0}}(R)}\int_{M}|u_{0}(z)|^{p}\chi _{B_{x}(r)}(z)\,dz\,dx \\
& =\int_{M}|u_{0}(z)|^{p}\int_{B_{x_{0}}(R)}\chi _{B_{x}(r)}(z)\,dx\,dz,
\end{align*}%
where $\chi _{B_{x}(r)}$ is the characteristic function of $B_{x}(r)$.
Observe also that $\int_{B_{x_{0}}(R)}\chi
_{B_{x}(r)}(z)\,dx=|B_{x_{0}}(R)\cap B_{z}(r)|$, because for fixed $z\in
B_{x_{0}}(R)$, we have $\chi _{B_{x}(r)}(z)=1$ if and only if $r(z,x)<r$
with $x\in B_{x_{0}}(R)$. Moreover, we can write
\begin{equation*}
|B_{x_{0}}(R)\cap B_{z}(r)|=|B_{x_{0}}(R)\cap B_{z}(r)|\chi
_{B_{x_{0}}(R+r)}(z).
\end{equation*}%
Thus, using \eqref{comp} (see \cite[Lemma 2.1]{li}), we obtain
\begin{align*}
\int_{B_{x_{0}}(R)}\rho _{x}(r)\,dx& \leq
\int_{M}|B_{z}(r)||u_{0}(z)|^{p}\chi _{B_{x_{0}}(R+r)}(z),dz \\
& \leq Cr^{n}e^{kr}\int_{B_{x_{0}}(R+r)}|u_{0}(z)|^{p},dz \\
& \leq Cr^{n}e^{kr}|B_{x_{0}}(R+r)|^{\frac{\lambda }{m}}\Vert u_{0}\Vert
_{g,p,\lambda }^{p} \\
& \leq Cr^{n}e^{kr}|B_{x_{0}}(R)|^{\frac{\lambda }{m}}e^{k\frac{\lambda }{m}%
r}\Vert u_{0}\Vert _{g,p,\lambda }^{p} \\
& \leq Cr^{n}e^{k\left( 1+\frac{\lambda }{m}\right) r}|B_{x_{0}}(R)|^{\frac{%
\lambda }{m}}\Vert u_{0}\Vert _{g,p,\lambda }^{p}.
\end{align*}%
where $n\in \lbrack 0,m]$. For $u_{0}\in \mathcal{M}_{p,\lambda }$ (item $%
(ii)$)$,$ this estimate becomes
\begin{equation*}
\int_{B_{x_{0}}(R)}\rho _{x}(r)\,dx\leq Cr^{n}\left( 1+r\right) ^{\lambda
}e^{kr}R^{{\lambda }}\Vert u_{0}\Vert _{p,\lambda }^{p}.
\end{equation*}

In this way, taking $0<c<\frac{1}{4}$ and $\gamma _{g}=k\left( 1+\frac{%
\lambda }{m}\right) \left( \frac{1}{4}-c\right) ^{-1}\geq \gamma ,$ we can
estimate
\begin{align*}
|B_{x_{0}}(R)|^{-\frac{\lambda }{m}}I_{4}^{p}& \leq C\Vert u_{0}\Vert
_{g,p,\lambda }^{p}\int_{\gamma _{g}t}^{\infty }r^{n}e^{k\left( 1+\frac{%
\lambda }{m}\right) r}\left( -dF(t,r)\right) \,dr \\
& \leq Cd(t)^{-\frac{m}{2}}e^{-\lambda _{1}t}\Vert u_{0}\Vert _{g,p,\lambda
}^{p}\int_{\gamma _{g}t}^{\infty }r^{n}e^{k\left( 1+\frac{\lambda }{m}%
\right) r}d\left( -e^{-\frac{r^{2}}{4t}}\right) \,dr \\
& \leq Cd(t)^{-\frac{m}{2}}t^{-1}e^{-\lambda _{1}t}\Vert u_{0}\Vert
_{g,p,\lambda }^{p}\int_{\gamma _{g}t}^{\infty }r^{n+1}e^{-\frac{r^{2}}{4t}%
+k\left( 1+\frac{\lambda }{m}\right) r}\,dr \\
& \leq Cd(t)^{-\frac{m}{2}}t^{-1}e^{-\lambda _{1}t}\Vert u_{0}\Vert
_{g,p,\lambda }^{p}\int_{0}^{\infty }r^{n+1}e^{-c\frac{r^{2}}{t}}\,dr \\
& \leq Cd(t)^{-\frac{m}{2}}t^{\frac{n}{2}}e^{-\lambda _{1}t}\Vert u_{0}\Vert
_{g,p,\lambda }^{p}\int_{0}^{\infty }s^{n+1}e^{-cs^{2}}\,ds \\
& \leq Cd(t)^{-\frac{m}{2}}t^{\frac{n}{2}}e^{-\lambda _{1}t}\Vert u_{0}\Vert
_{g,p,\lambda }^{p}.
\end{align*}%
For $u_{0}\in \mathcal{M}_{p,\lambda }$, the argument is analogous by taking
$\gamma =k\left( \frac{1}{4}-c\right) ^{-1}$. Moreover, if $t\leq 1,$ we
take $n=m,$ which yields
\begin{equation}
|B_{x_{0}}(R)|^{-\frac{\lambda }{m}}I_{4}^{p}\leq Ce^{-{\lambda _{1}}t}\Vert
u_{0}\Vert _{g,p,\lambda }^{p}.  \label{eq-mm2}
\end{equation}%
If $t>1,$ we take $n=0$, yielding the same bound%
\begin{equation}
|B_{x_{0}}(R)|^{-\frac{\lambda }{m}}I_{4}^{p}\leq Ce^{-\lambda _{1}t}\Vert
u_{0}\Vert _{g,p,\lambda }^{p}.  \label{eq-mm3}
\end{equation}%
By \eqref{eq-mm1}, \eqref{eq-mm2} and \eqref{eq-mm3}, for $R>1$ we obtain
\begin{equation}
\left( B_{x_{0}}(R)^{-\frac{\lambda }{m}}\int_{B_{x_{0}}(R)}|u(t,x)|^{p}\,dx%
\right) ^{\frac{1}{p}}\leq Ce^{-\alpha _{p}t+k\frac{\lambda }{mp}\gamma
_{g}t}\Vert u_{0}\Vert _{g,p,\lambda },  \label{eq-mm-1p}
\end{equation}%
where $\alpha _{p}=\min \left\{ \frac{4\delta _{m}(p-1)}{p^{2}},\frac{%
\lambda _{1}}{p}\right\} $. Finally, combining \eqref{eq-mm-0p} and %
\eqref{eq-mm-1p}, we deduce%
\begin{equation*}
\Vert u\Vert _{g,p,\lambda }\leq Ce^{-\alpha _{p}t+k\frac{\lambda }{mp}%
\gamma _{g}t}\Vert u_{0}\Vert _{g,p,\lambda }.
\end{equation*}%
In the case where $u_{0}\in \mathcal{M}_{p,\lambda }$ (item $(ii)$), we get
\begin{equation*}
\Vert u\Vert _{p,\lambda }\leq C\left( 1+\gamma t\right) ^{\frac{\lambda }{p}%
}e^{-\alpha _{p}t}\Vert u_{0}\Vert _{p,\lambda }.
\end{equation*}%
\fin

Now, combining the estimates established in Theorems \ref{th-mi} and \ref%
{th-mm}, we obtain the boundedness properties of the heat semigroup as an
operator from $\mathcal{M}_{p,\lambda }^{g}$ to $\mathcal{M}_{q,\lambda
}^{g} $.

\begin{thr}[Dispersive: $\mathcal{M}_{p,\protect\lambda }^{g}\rightarrow
\mathcal{M}_{q,\protect\lambda }^{g}$]
\label{th-d} Let $(M,g)$ be a complete, simply connected, non-compact, $m$%
-dimensional Riemannian manifold without boundary. Let $1\leq p\leq q<\infty
$ and $\lambda \in \lbrack 0,m)$ and denote by $u_{0}$ the initial data for
the heat equation.

\begin{enumerate}
\item[$(i)$] Assume that $-K(m-1)\leq \func{Ric}\leq -c_{0}<0$ and $\kappa
\leq 0,$ and let $u_{0}\in \mathcal{M}_{p,\lambda }^{g}(\Gamma (TM))$. Then,
for all $t>0$, the following estimate holds:
\begin{equation*}
\Vert e^{t\Delta _{g}}u_{0}\Vert _{g,q,\lambda }\leq C\left[ d(t)^{-\frac{m}{%
2}}t^{\frac{\lambda }{2}}\right] ^{\frac{1}{p}-\frac{1}{q}}e^{-\alpha
_{p,q}t+k\frac{\lambda }{mq}\gamma _{g}t}\Vert u_{0}\Vert _{g,p,\lambda },
\end{equation*}%
where $d(t)=\min \{1,t\}$, $\alpha _{p,q}=\min \left\{ \frac{4\delta
_{m}(p-1)}{pq},\frac{\lambda _{1}}{q}\right\} ,$ $k=\sqrt{K}(m-1),$ $\gamma
_{g}=k\left( 1+\frac{\lambda }{m}\right) \left( \frac{1}{4}-c\right) ^{-1}$
with $0<c<\frac{1}{4}$, and $C=C(m,c_{0},K)$.

\item[$(ii)$] Assume that $\func{Ric}\geq -K(m-1)$ and $\kappa \leq 0$, and
let $u_{0}\in \mathcal{M}_{p,\lambda }(\Gamma (TM)).$ Then, for all $t>0$,
the following estimate holds:
\begin{equation*}
\Vert e^{t\Delta _{g}}u_{0}\Vert _{q,\lambda }\leq C\left[ d(t)^{-\frac{m}{2}%
}t^{\frac{\lambda }{2}}\right] ^{\frac{1}{p}-\frac{1}{q}}\left( 1+\gamma
t\right) ^{\frac{\lambda }{q}}e^{-a_{p,q}t}\Vert u_{0}\Vert _{p,\lambda },
\end{equation*}%
where $\gamma =k\frac{\lambda }{m}\left( \frac{1}{4}-c\right) ^{-1}$ with $%
0<c<\frac{1}{4}$ and $C=C(m,K)$.
\end{enumerate}
\end{thr}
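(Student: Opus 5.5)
The estimate follows by interpolating pointwise between the $L^\infty$ bound of Theorem \ref{th-mi} and the Morrey-to-Morrey bound of Theorem \ref{th-mm}. Write $u=e^{t\Delta_g}u_0$. For $p\le q$, on every geodesic ball we have $|u|^q\le \|u\|_\infty^{q-p}|u|^p$. Hence
\begin{equation*}
\left(|B_{x_0}(R)|^{-\frac{\lambda}{m}}\int_{B_{x_0}(R)}|u|^q\right)^{\frac1q}\le \|u\|_\infty^{1-\frac pq}\left(|B_{x_0}(R)|^{-\frac{\lambda}{m}}\int_{B_{x_0}(R)}|u|^p\right)^{\frac1q}.
\end{equation*}
Taking the supremum over $x_0,R$ gives the elementary interpolation inequality
\begin{equation*}
\|u\|_{g,q,\lambda}\le \|u\|_\infty^{1-\frac pq}\,\|u\|_{g,p,\lambda}^{\frac pq}.
\end{equation*}
The same inequality holds with $R^m$ in place of $|B_{x_0}(R)|$, which is what item $(ii)$ needs.

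Next, insert the two earlier bounds. From Theorem \ref{th-mi}$(i)$,
\begin{equation*}
\|u\|_\infty^{1-\frac pq}\le C\left[d(t)^{-\frac m2}t^{\frac\lambda2}\right]^{\frac1p-\frac1q}e^{-\left(\frac1p-\frac1q\right)\left[\lambda_1-k\frac{\lambda}{m}\gamma\right]t}\|u_0\|_{g,p,\lambda}^{1-\frac pq},
\end{equation*}
since $\frac1p\left(1-\frac pq\right)=\frac1p-\frac1q$. From Theorem \ref{th-mm}$(i)$,
\begin{equation*}
\|u\|_{g,p,\lambda}^{\frac pq}\le Ce^{-\frac pq\alpha_p t+k\frac{\lambda}{mq}\gamma_g t}\|u_0\|_{g,p,\lambda}^{\frac pq}.
\end{equation*}
Multiplying the two bounds gives the polynomial prefactor $\left[d(t)^{-\frac m2}t^{\frac\lambda2}\right]^{\frac1p-\frac1q}$ and the full power $\|u_0\|_{g,p,\lambda}$. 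Item $(ii)$ is handled identically using Theorem \ref{th-mi}$(ii)$ and Theorem \ref{th-mm}$(ii)$. In that case the factor $(1+\gamma t)^{\lambda/p}$ raised to the power $p/q$ becomes $(1+\gamma t)^{\lambda/q}$, and the exponential rates combine into $a_{p,q}$.

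The main obstacle is bookkeeping of the exponential rates. The decay part works out as follows. We have $\frac pq\alpha_p=\min\left\{\frac{4\delta_m(p-1)}{pq},\frac{\lambda_1}{q}\right\}=\alpha_{p,q}$. The extra decay factor $e^{-(\frac1p-\frac1q)\lambda_1 t}$ is at most $1$, so it can simply be discarded. The growth part is the problem. Theorem \ref{th-mi} contributes a growth factor $e^{(\frac1p-\frac1q)k\frac{\lambda}{m}\gamma t}$ with $\gamma=k\frac{\lambda}{m}(\frac14-c)^{-1}$, and this is not visibly absorbed into $e^{k\frac{\lambda}{mq}\gamma_g t}$.

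I would try two remedies, in this order. First, redo the proof of Theorem \ref{th-mi} keeping the term $e^{-\lambda_1 t}$, and check whether the small-$r$ contribution can be bounded without producing the factor $e^{k\frac{\lambda}{m}\gamma t}$. For $r\le\gamma t$ one can use the volume bound \eqref{eq-ball} with $n=m$ for $r\le 1$ and the exponential bound only for larger $r$; one then absorbs into $e^{-\lambda_1 t}$ whatever growth remains. Second, if that fails, I would accept a slightly modified constant in the exponent. This would be stated as an enlarged $\gamma_g$, or equivalently the growth from Theorem \ref{th-mi} counted in with $e^{k\frac{\lambda}{mq}\gamma_g t}$ after noting $\gamma\le\gamma_g$. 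The structure of the final estimate would be unchanged, since only the precise value of the exponential rate is affected. Uniformity of the constant $C=C(m,c_0,K)$ is automatic, because it is the product of the constants from the two cited theorems.
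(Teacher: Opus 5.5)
Your argument is the paper's argument: the interpolation inequality \eqref{eq-pq} combined with Theorems \ref{th-mi} and \ref{th-mm}. It uses $\frac{p}{q}\alpha_p=\alpha_{p,q}$ and, in item $(ii)$, $(1+\gamma t)^{\frac{\lambda}{p}\cdot\frac{p}{q}}=(1+\gamma t)^{\frac{\lambda}{q}}$. Item $(ii)$ is fine as you wrote it, because Theorem \ref{th-mi}$(ii)$ carries no growth. In item $(i)$ the bookkeeping issue you raise is real. The product of the two bounds is the stated right-hand side times
\[
E(t)=e^{-\left(\frac{1}{p}-\frac{1}{q}\right)\left(\lambda_1-k\frac{\lambda}{m}\gamma\right)t},
\]
and the paper's proof simply writes down the stated exponent, dropping $E(t)$ without comment.

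Where your proposal goes wrong is in the repair. For $p<q$, $E(t)\le 1$ exactly when $\lambda_1\ge k\frac{\lambda}{m}\gamma$, and this is what closes the step. Condition $(iv)$ of Definition \ref{df-m} reads $\frac{1}{p}k\frac{\lambda}{m}\gamma_g\le\alpha_p\le\frac{\lambda_1}{p}$, and $\gamma\le\gamma_g$, so under $(iv)$ the estimate holds verbatim. That covers every global-in-time use (Theorems \ref{th-visc} and \ref{th-ein}); on bounded time intervals, as in Theorem \ref{th-local}, $E(t)$ is harmless anyway.

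Without a smallness condition on $\lambda$, neither of your remedies works.
\begin{itemize}
\item Reworking the proof of Theorem \ref{th-mi} cannot remove the growth for large $\lambda$. With only \eqref{es-h4} and \eqref{eq-ball} as input, completing the square in $-\frac{r^2}{4t}+k\frac{\lambda}{m}r$ shows the radial integral is of size $e^{(k\lambda/m)^2t}$ up to polynomial factors. Since $\lambda_1\le\frac{k^2}{4}$ (Cheng's bound, recalled at the start of Section \ref{s3}), this growth is not absorbed into $e^{-\lambda_1 t}$ once $\lambda>m/2$. With the paper's $\gamma$ it fails already for $\lambda\ge m/4$, because then $k\frac{\lambda}{m}\gamma>\frac{4k^2\lambda^2}{m^2}\ge\frac{k^2}{4}\ge\lambda_1$.
\item The inequality $\gamma\le\gamma_g$ does not let you fold the extra growth into $e^{k\frac{\lambda}{mq}\gamma_g t}$. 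After discarding $e^{-(\frac1p-\frac1q)\lambda_1t}$, the growth rate is $k\frac{\lambda}{m}\bigl[\frac{1}{q}\gamma_g+\bigl(\frac{1}{p}-\frac{1}{q}\bigr)\gamma\bigr]$, which strictly exceeds $k\frac{\lambda}{mq}\gamma_g$ whenever $\lambda>0$ and $p<q$.
\end{itemize}
What does hold for all $\lambda\in[0,m)$ is the bound with $e^{-\alpha_{p,q}t+k\frac{\lambda}{mp}\gamma_g t}$, since $\frac{1}{q}\gamma_g+(\frac{1}{p}-\frac{1}{q})\gamma\le\frac{1}{p}\gamma_g$. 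An ``enlarged $\gamma_g$'' would have to be $\gamma_g+(\frac{q}{p}-1)\gamma$, which depends on $q$. Either way, that is a weaker statement, not a proof of the one given.
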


\noindent {\textbf{Proof.}} Firstly, note the interpolation-type estimates
\begin{equation}
\Vert u\Vert _{g,q,\lambda }\leq \Vert u\Vert _{\infty }^{\frac{q-p}{q}%
}\Vert u\Vert _{g,p,\lambda }^{\frac{p}{q}}\text{ and }\Vert u\Vert
_{q,\lambda }\leq \Vert u\Vert _{\infty }^{\frac{q-p}{q}}\Vert u\Vert
_{p,\lambda }^{\frac{p}{q}}.  \label{eq-pq}
\end{equation}%
Denoting $u=e^{t\Delta _{g}}u_{0}$ and using the estimates in Theorem \ref%
{th-mi}, it follows that
\begin{align*}
\Vert u(t)\Vert _{\infty }& \leq Cd(t)^{-\frac{m}{2p}}t^{\frac{\lambda }{2p}%
}e^{-\frac{1}{p}\left[ \lambda _{1}-k\frac{\lambda }{m}\gamma \right]
t}\Vert u_{0}\Vert _{g,p,\lambda }, \\
\Vert u(t)\Vert _{\infty }& \leq Cd(t)^{-\frac{m}{2p}}t^{\frac{\lambda }{2p}%
}e^{-\frac{\lambda _{1}}{p}t}\Vert u_{0}\Vert _{p,\lambda }.
\end{align*}%
In turn, Theorem \ref{th-mm} gives the estimates
\begin{align*}
\Vert u\Vert _{g,p,\lambda }& \leq Ce^{-\alpha _{p}t+k\frac{\lambda }{mp}%
\gamma _{g}t}\Vert u_{0}\Vert _{g,p,\lambda }, \\
\Vert u\Vert _{p,\lambda }& \leq C\left( 1+\gamma t\right) ^{\frac{\lambda }{%
p}}e^{-a_{p}t}\Vert u_{0}\Vert _{p,\lambda }.
\end{align*}%
Combining (\ref{eq-pq}) with the above estimates, we arrive at
\begin{align*}
\Vert u\Vert _{g,q,\lambda }& \leq C\left[ d(t)^{-\frac{m}{2}}t^{\frac{%
\lambda }{2}}\right] ^{\frac{1}{p}-\frac{1}{q}}e^{-\alpha _{p,q}t+k\frac{%
\lambda }{mq}\gamma _{g}t}\Vert u_{0}\Vert _{g,p,\lambda }, \\
\Vert u\Vert _{q,\lambda }& \leq C\left[ d(t)^{-\frac{m}{2}}t^{\frac{\lambda
}{2}}\right] ^{\frac{1}{p}-\frac{1}{q}}\left( 1+\gamma t\right) ^{\frac{%
\lambda }{q}}e^{-a_{p,q}t}\Vert u_{0}\Vert _{p,\lambda },
\end{align*}%
as desired.\fin


\section{Heat equation with Bochner Laplacian}

\label{s4}

In this section, we analyze the semigroup $e^{t(\overrightarrow{\Delta }+r)}$%
, where the operator $\overrightarrow{\Delta }$ denotes the Bochner
Laplacian defined as
\begin{equation*}
\overrightarrow{\Delta }u=-\nabla ^{\ast }\nabla u,
\end{equation*}%
and $\nabla ^{\ast }$ is the formal adjoint of $\nabla $ in $L^{2}$. Using
the gradient estimate for the heat kernel associated with the Hodge
Laplacian \eqref{es-ho}, we derive smoothing estimates for both $e^{t\Delta
_{g}}$ and $e^{t(\overrightarrow{\Delta }+r)}$. For this purpose, we adapt
some ideas found in Pierfelice \cite{Pierfelice2017} to the Morrey setting.

\subsection{Dispersive estimates}

\label{s4.1} Once again, the geometric assumption on the Ricci curvature
plays a crucial role in comparing operators. Indeed, by \cite[Lemma 4.5]%
{Pierfelice2017}, if $\func{Ric}\leq -c_{0}$ with $c_{0}>0$, then for any $%
u_{0}\in \mathcal{C}_{b}^{\infty }(\Gamma (TM))$ it follows that
\begin{equation}
|e^{t(\overrightarrow{\Delta }+r)}u_{0}|\leq e^{t({\Delta }%
_{g}-c_{0})}|u_{0}|.  \label{es-ric}
\end{equation}%
Furthermore, due the classical Weitzenb\"{o}ck identity, with $r(u)=\func{Ric%
}(u,\cdot )^{\sharp },$ it holds that%
\begin{equation}
\overrightarrow{\Delta }u=\Delta _{H}u+r(u).  \label{weit}
\end{equation}%
This identity allows us to relate the Bochner Laplacian to the Hodge
Laplacian, defined by $\Delta _{H}=-(dd^{\ast }+d^{\ast }d),$ where $d$
denotes the exterior derivative and $d^{\ast }$ its formal adjoint. For $%
u\in \Gamma (TM)$, note that $\Delta _{H}u=(\Delta _{H}u^{\flat })^{\sharp }$
and thus $e^{t(\overrightarrow{\Delta }+r)}=e^{t(\Delta _{H}+2r)}$.

Now, we can combine Theorem \ref{th-d} with \eqref{es-ric} to obtain the
desired bounds for $e^{t(\overrightarrow{\Delta }+r)}$.

\begin{thr}[Dispersive: $\mathcal{M}_{p,\protect\lambda }^{g}\rightarrow
\mathcal{M}_{q,\protect\lambda }^{g}$]
\label{th-cd} Let $(M,g)$ be a complete, simply connected, non-compact, $m$%
-dimensional Riemannian manifold without boundary. Let $1\leq p\leq q<\infty
$ and $\lambda \in \lbrack 0,m)$ and denote by $u_{0}$ the initial data for
the heat equation.

\begin{enumerate}
\item[$(i)$] Assume that $-K(m-1)\leq \func{Ric}\leq -c_{0}<0$ and $\kappa
\leq 0,$ and let $u_{0}\in \mathcal{M}_{p,\lambda }^{g}(\Gamma (TM))$. Then,
for all $t>0$, the following estimate holds:
\begin{equation*}
\Vert e^{t(\overrightarrow{\Delta }+r)}u_{0}\Vert _{g,q,\lambda }\leq C\left[
d(t)^{-\frac{m}{2}}t^{\frac{\lambda }{2}}\right] ^{\frac{1}{p}-\frac{1}{q}%
}e^{-c_{0}t-\alpha _{p,q}t+k\frac{\lambda }{mq}\gamma _{g}t}\Vert u_{0}\Vert
_{g,p,\lambda },
\end{equation*}%
where $d(t)=\min \{1,t\}$, $\alpha _{p,q}=\min \left\{ \frac{4\delta
_{m}(p-1)}{pq},\frac{\lambda _{1}}{q}\right\} ,$ $k=\sqrt{K}(m-1),$ $\gamma
_{g}=k\left( 1+\frac{\lambda }{m}\right) \left( \frac{1}{4}-c\right) ^{-1}$
with $0<c<\frac{1}{4}$, and $C=C(m,c_{0},K)$.

\item[$(ii)$] Assume that $-K(m-1)\leq \func{Ric}\leq -c_{0}<0$, and let $%
u_{0}\in \mathcal{M}_{p,\lambda }(\Gamma (TM)).$ Then, for all $t>0$, the
following estimate holds:
\begin{equation*}
\Vert e^{t(\overrightarrow{\Delta }+r)}u_{0}\Vert _{q,\lambda }\leq C\left[
d(t)^{-\frac{m}{2}}t^{\frac{\lambda }{2}}\right] ^{\frac{1}{p}-\frac{1}{q}%
}\left( 1+\gamma t\right) ^{\frac{\lambda }{q}}e^{-c_{0}t-a_{p,q}t}\Vert
u_{0}\Vert _{p,\lambda },
\end{equation*}%
where $\gamma =k\frac{\lambda }{m}\left( \frac{1}{4}-c\right) ^{-1}$ with $%
0<c<\frac{1}{4}$ and $C=C(m,K)$.
\end{enumerate}
\end{thr}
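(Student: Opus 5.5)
The plan is to reduce everything to the scalar estimates of Theorem \ref{th-d} through the pointwise domination \eqref{es-ric}. Fix $u_{0}$ in the relevant Morrey space. First assume $u_{0}\in \mathcal{C}_{b}^{\infty }(\Gamma (TM))$, so that \eqref{es-ric} applies directly. Since $e^{t(\Delta _{g}-c_{0})}=e^{-c_{0}t}e^{t\Delta _{g}}$ and $G\geq 0$, \eqref{es-ric} gives the pointwise bound
\begin{equation*}
|e^{t(\overrightarrow{\Delta }+r)}u_{0}(x)|\leq e^{-c_{0}t}\,\big(e^{t\Delta _{g}}|u_{0}|\big)(x)\quad \text{for all }x\in M,\ t>0.
\end{equation*}
Both Morrey norms depend only on $|f|$ and are monotone in it, that is, $|f|\leq h$ pointwise implies $\Vert f\Vert _{g,q,\lambda }\leq \Vert h\Vert _{g,q,\lambda }$, and likewise for $\Vert \cdot \Vert _{q,\lambda }$. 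Hence
\begin{equation*}
\Vert e^{t(\overrightarrow{\Delta }+r)}u_{0}\Vert _{g,q,\lambda }\leq e^{-c_{0}t}\Vert e^{t\Delta _{g}}|u_{0}|\Vert _{g,q,\lambda }.
\end{equation*}

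Next, apply Theorem \ref{th-d}(i) to the scalar datum $|u_{0}|$, noting that $\Vert |u_{0}|\Vert _{g,p,\lambda }=\Vert u_{0}\Vert _{g,p,\lambda }$. This yields item (i), with the extra factor $e^{-c_{0}t}$ multiplying the decay of Theorem \ref{th-d}. Item (ii) follows the same way from Theorem \ref{th-d}(ii) in the space $\mathcal{M}_{p,\lambda }$. The assumption $\func{Ric}\leq -c_{0}$ supplies \eqref{es-ric}, and the heat kernel bound \eqref{es-h3} used for Theorem \ref{th-d}(ii) requires $\kappa \leq 0$. Item (ii) as stated omits $\kappa \leq 0$, so I would either treat it as implicit in the standing setting or check that the kernel estimate used is still available. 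The constants are inherited from Theorem \ref{th-d}.

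The main obstacle is removing the smoothness restriction $u_{0}\in \mathcal{C}_{b}^{\infty }$ in \eqref{es-ric}, since Morrey data are only $L_{loc}^{p}$. I would first try to extend the domination by approximation. Take $u_{0}^{j}\in \mathcal{C}_{b}^{\infty }$ (truncate by $\chi _{B_{x_{0}}(j)}$ and mollify locally in the bounded-geometry charts) with $u_{0}^{j}\to u_{0}$ in $L_{loc}^{p}$ and $|u_{0}^{j}|$ controlled by a local maximal-type average of $|u_{0}|$. Then pass to the limit in the pointwise inequality, using the integral (kernel) representation of $e^{t(\overrightarrow{\Delta }+r)}$ and the Gaussian decay of the scalar kernel to justify dominated convergence on each ball. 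Alternatively, I would argue that the vector heat kernel $\vec G(t,x,y)$ satisfies $|\vec G(t,x,y)|\leq e^{-c_{0}t}G(t,x,y)$, which is the kernel-level form of \eqref{es-ric} obtained by testing against approximate deltas. Once that holds, the inequality holds for every $u_{0}\in L_{loc}^{p}$ with finite Morrey norm, because the right-hand side converges absolutely by the estimates already used in Theorems \ref{th-mi} and \ref{th-mm}.
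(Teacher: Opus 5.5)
Your proposal is correct and follows the paper's route: the paper proves Theorem \ref{th-cd} in one line, by combining the pointwise domination \eqref{es-ric} with Theorem \ref{th-d}, which produces the extra factor $e^{-c_{0}t}$. You also flag two points the paper leaves implicit, namely that \eqref{es-ric} must be extended beyond $\mathcal{C}_{b}^{\infty}$ data (most cleanly through the kernel-level bound $|\vec{G}(t,x,y)|\leq e^{-c_{0}t}G(t,x,y)$), and that item (ii) silently needs $\kappa\leq 0$ in order to invoke Theorem \ref{th-d}(ii).
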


\subsection{Smoothing estimates}

\label{s4.2} Having established the dispersive estimates in Morrey spaces,
the next objective is to derive the corresponding smoothing results
(gradient estimates). To this end, we consider the following definition of
bounded geometry (see \cite{Buttig},\cite{Shubin1992}).

\begin{df}
Let $(M,g)$ be a Riemannian manifold. We say that $M$ has bounded geometry
if the following conditions hold:

\begin{enumerate}
\item[$(i)$] The injectivity radius is uniformly positive, that is, $%
r_{inj}=\inf_{x\in M}r_{x}>0$, where $r_{x}$ denotes the injectivity radius
at $x$ defined via the exponential map $\exp _{x}$;

\item[$(ii)$] The covariant derivatives of the Riemann curvature tensor $%
\mathcal{R}$ are uniformly bounded: for every $i\in \mathbb{Z}_{\geq 0},$
there exists a constant $L_{i}$ such that $|\nabla ^{i}\mathcal{R}|\leq
L_{i} $ on $M$.
\end{enumerate}
\end{df}

By Buttig and Eichhorn \cite{Buttig}, if $M$ has bounded geometry, then
there exist constants $C_{1},C_{2}>0$, depending on $l,i,j$, such that for
all $x,y\in M$ and $t\in (0,\infty )$,
\begin{equation}
\bigg|\left( \partial _{t}\right) ^{l}\nabla ^{i}\nabla ^{j}H(t,x,y)\bigg|%
\leq C_{1}t^{-\frac{m}{2}-\frac{i+j}{2}-l}e^{-C_{2}\frac{r^{2}}{t}},
\label{es-ho}
\end{equation}%
where $H$ denotes the integral kernel of $e^{t\Delta _{H}}$. Using %
\eqref{es-ho}, we are going to estimate the norm of $\nabla e^{t\Delta _{g}}$
and $\nabla e^{t(\overrightarrow{\Delta }+r)}$ through Duhamel's formula.
The exponential decay of the norm of $\nabla e^{t\Delta _{g}}$ will play a
crucial role in establishing the boundedness of the Riesz transform (see
Theorem \ref{th-ri}).

\begin{thr}[Smoothing: $\mathcal{M}_{p,\protect\lambda }^{g}\rightarrow
\mathcal{M}_{q,\protect\lambda }^{g}$]
\label{es-s} Let $(M,g)$ be a complete, simply connected, non-compact, $m$%
-dimensional Riemannian manifold without boundary. Let $1\leq p\leq q<\infty
$ and $\lambda \in \lbrack 0,m)$ and denote by $u_{0}$ the initial data for
the heat equation.

\begin{enumerate}
\item[$(i)$] Assume that $M$ has bounded geometry, $-K(m-1)\leq \func{Ric}%
\leq -c_{0}<0$, $\kappa \leq 0,$ and let $u_{0}\in \mathcal{M}_{p,\lambda
}^{g}(\Gamma (TM))$. Then, for all $t>0$, the following estimate holds:
\begin{equation*}
\Vert \nabla e^{t\Delta _{g}}u_{0}\Vert _{g,q,\lambda }\leq C\left[ d(t)%
\right] ^{-\frac{1}{2}-\frac{m}{2}\left( \frac{1}{p}-\frac{1}{q}\right) }t^{%
\frac{\lambda }{2}\left( \frac{1}{p}-\frac{1}{q}\right) }e^{-\alpha _{p,q}t+k%
\frac{\lambda }{mq}\gamma _{g}t}\Vert u_{0}\Vert _{g,p,\lambda },
\end{equation*}%
where $d(t)=\min \{1,t\}$, $\alpha _{p,q}=\min \left\{ \frac{4\delta
_{m}(p-1)}{pq},\frac{\lambda _{1}}{q}\right\} ,$ $k=\sqrt{K}(m-1),$ $\gamma
_{g}=k\left( 1+\frac{\lambda }{m}\right) \left( \frac{1}{4}-c\right) ^{-1}$
with $0<c<\frac{1}{4}$, and $C=C(m,c_{0},K)$.

\item[$(ii)$] Assume that $M$ has bounded geometry, $\func{Ric}\geq -K(m-1),$
$\kappa \leq 0,$ and let $u_{0}\in \mathcal{M}_{p,\lambda }(\Gamma (TM))$.
Then, for all $t>0$, the following estimate holds:
\begin{equation*}
\Vert \nabla e^{t\Delta _{g}}u_{0}\Vert _{q,\lambda }\leq C\left[ d(t)\right]
^{-\frac{1}{2}-\frac{m}{2}\left( \frac{1}{p}-\frac{1}{q}\right) }t^{\frac{%
\lambda }{2}\left( \frac{1}{p}-\frac{1}{q}\right) }\left( 1+\gamma t\right)
^{\frac{\lambda }{q}}e^{-a_{p,q}t}\Vert u_{0}\Vert _{p,\lambda },
\end{equation*}%
where $\gamma =k\frac{\lambda }{m}\left( \frac{1}{4}-c\right) ^{-1}$ with $%
0<c<\frac{1}{4}$ and $C=C(m,K)$.
\end{enumerate}
\end{thr}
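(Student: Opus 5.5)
The plan is to separate the gradient from the decay: put the derivative on a short-time piece of the semigroup and let the already-established dispersive bound carry the long-time behaviour. Write $e^{t\Delta_g}=e^{s\Delta_g}e^{(t-s)\Delta_g}$ with $s=d(t)/2$, so that $s\le 1/2$ and $t-s\ge t/2$.

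\textbf{Step 1: the gradient as a Hodge-heat semigroup.} Since $d$ commutes with the Hodge Laplacian, we have $d\,e^{s\Delta_g}v=e^{s\Delta_H}dv$. Write $e^{s\Delta_g}=e^{(s/2)\Delta_g}e^{(s/2)\Delta_g}$. Then
\[
\nabla e^{s\Delta_g}v(x)=\int_M \nabla_x G(s/2,x,y)\,\big(e^{(s/2)\Delta_g}v\big)(y)\,dy,
\]
and by the bounded geometry kernel bound \eqref{es-ho} (which applies to the scalar kernel, the $0$-form case of $H$) the kernel is controlled by
\[
|\nabla_x G(s/2,x,y)|\le C s^{-\frac12}\, s^{-\frac m2}e^{-C_2 r^2/s}.
\]

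\textbf{Step 2: Morrey boundedness of the gradient operator.} Next I show that $\nabla e^{s\Delta_g}:\mathcal{M}^g_{q,\lambda}\to\mathcal{M}^g_{q,\lambda}$ has norm at most $Cs^{-1/2}$ for $0<s\le 1$. The kernel $s^{-1/2}\,s^{-m/2}e^{-C_2r^2/s}$ has the same Gaussian form as the heat kernel bound \eqref{es-h4} with $d(s)^{-m/2}$, apart from the factor $s^{-1/2}$ and the constant $C_2$ in place of $1/4$. So the whole proof of Theorem \ref{th-mm} (splitting $R\le 1$ and $R>1$ into $I_1,\dots,I_4$) can be repeated verbatim with $F(s,r)=s^{-\frac{m+1}{2}}e^{-C_2r^2/s}$.

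The one ingredient of that proof that is not kernel-based is the $L^p$ bound used for $I_1$ and $I_3$, which came from \cite{Pierfelice2017}. Here I replace it by Young's/Schur's inequality: the integrals
\[
\sup_x\int_M F\,dy \quad\text{and}\quad \sup_y\int_M F\,dx
\]
are at most $Cs^{-1/2}$. This uses the volume bound \eqref{eq-ball} with $n=m$, valid since $s\le 1$, so the exponential volume growth only costs a constant. Because $s\le1$, all the exponential-in-time factors in this step are bounded constants.

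\textbf{Step 3: combine.} Apply Theorem \ref{th-d} to $e^{(t-s)\Delta_g}$ from $\mathcal{M}^g_{p,\lambda}$ to $\mathcal{M}^g_{q,\lambda}$. This gives
\[
\big[d(t-s)^{-\frac m2}(t-s)^{\frac\lambda2}\big]^{\frac1p-\frac1q}\,e^{-\alpha_{p,q}(t-s)+k\frac{\lambda}{mq}\gamma_g(t-s)}.
\]
Since $t/2\le t-s\le t$ and $s\le 1/2$, this is comparable to the same expression evaluated at $t$, up to constants. Multiplying by the factor $Cs^{-1/2}=Cd(t)^{-1/2}$ from Step 2 yields exactly the bound in item $(i)$.

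Item $(ii)$ follows the same way, using item $(ii)$ of Theorems \ref{th-mm} and \ref{th-d}. The Gaussian kernel argument transfers directly to $\mathcal{M}_{p,\lambda}$, where the normalisation is simply $R^\lambda$.

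\textbf{Main obstacle.} The hardest part is Step 2: checking that the gradient kernel really slots into the $I_1$--$I_4$ argument. In particular the $R\le1$ splitting uses $r(x,y)>\varepsilon r(x,x_0)$ together with integration by parts against $d\rho_x$, and I need to confirm that the constant $C_2$ (rather than $1/4$) only changes $c$ and $\gamma$. If the $I_1$ step causes trouble, my first fallback is to use the $L^p$-boundedness of $\sqrt{s}\,\nabla e^{s\Delta_g}$ on manifolds of bounded geometry, obtained from Schur's test as above, and localize it exactly as in \eqref{eq-mm-00}.
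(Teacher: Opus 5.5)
Your proposal is correct and follows essentially the paper's route. You obtain a short-time bound $\Vert \nabla e^{s\Delta_g}\Vert_{\mathcal{M}^g_{q,\lambda}\to\mathcal{M}^g_{q,\lambda}}\le Cs^{-1/2}$ from the Gaussian gradient bound \eqref{es-ho} fed into the $I_1$--$I_4$ argument of Theorem \ref{th-mm}, then combine it via the semigroup property with the dispersive Theorem \ref{th-d}. The differences are only cosmetic: you use a single split with $s=d(t)/2$, whereas the paper first treats $p=q$ via $e^{\Delta_g}e^{(t-1)\Delta_g}$ and then $p<q$ via a $t/2$ split, and you control the local $L^p$ pieces by Schur's test instead of citing the $L^p$ bound for $e^{t\Delta_g}$, which is if anything the more appropriate justification for the gradient kernel.
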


\noindent {\textbf{Proof.}} We prove the estimate in the case $p=q$. For $%
0<t\leq 1,$ we claim that
\begin{equation}
\Vert \nabla e^{t\Delta _{g}}u_{0}^{\flat }\Vert _{g,p,\lambda }\leq \dfrac{C%
}{\sqrt{t}}\Vert u_{0}\Vert _{g,p,\lambda }.  \label{eq-na}
\end{equation}%
Indeed, noting that
\begin{equation*}
\nabla e^{t\Delta _{g}}u_{0}^{\flat }=\int_{M}\nabla _{x}H(t,x,y)\wedge \ast
u_{0}^{\flat }(y)\,dy,
\end{equation*}%
and then using \eqref{es-ho}, we obtain
\begin{equation}
|\nabla e^{t\Delta _{g}}u_{0}^{\flat }|\leq C_{1}t^{-\frac{1}{2}}\int_{M}t^{-%
\frac{m}{2}}e^{-C_{2}\frac{r^{2}}{t}}|u_{0}(y)|\,dy.
\label{aux-proof-grad-1}
\end{equation}%
Combining estimate (\ref{aux-proof-grad-1}) with H\"{o}lder's inequality and
the arguments used in Theorem \ref{th-mm}, we can deduce \eqref{eq-na} for $%
0<t\leq 1$.

For the case $t>1$, we first use the semigroup property to write
\begin{equation*}
\nabla e^{t\Delta _{g}}=\nabla e^{\Delta _{g}}\left( e^{(t-1)\Delta
_{g}}\right) .
\end{equation*}%
So, applying estimate (\ref{eq-na}) with $t=1$ together with Theorem \ref%
{th-mm}, we obtain
\begin{align}
\Vert \nabla e^{t\Delta _{g}}u_{0}^{\flat }\Vert _{g,p,\lambda }&
=\left\Vert \nabla e^{\Delta _{g}}\left( e^{(t-1)\Delta _{g}}u_{0}^{\flat
}\right) \right\Vert _{g,p,\lambda }  \notag \\
& \leq C\left\Vert e^{(t-1)\Delta _{g}}u_{0}^{\flat }\right\Vert
_{g,p,\lambda }  \notag \\
& \leq Ce^{-\alpha _{p}t+k\frac{\lambda }{mp}\gamma _{g}t}\left\Vert
u_{0}\right\Vert _{g,p,\lambda }.  \label{aux-proof-grad-2}
\end{align}

For $p<q$, we can apply Theorem \ref{th-d} together with the semigroup
decomposition
\begin{equation*}
\nabla e^{t\Delta _{g}}u_{0}^{\flat }=\nabla e^{\frac{t}{2}\Delta
_{g}}\left( e^{\frac{t}{2}\Delta _{g}}u_{0}^{\flat }\right) .
\end{equation*}%
This splitting allows us to combine the dispersive estimate for $e^{\frac{t}{%
2}\Delta _{g}}$, acting from $\mathcal{M}_{p,\lambda }^{g}$ to $\mathcal{M}%
_{q,\lambda }^{g}$, with the gradient bound (\ref{aux-proof-grad-2})
established above, and then obtain the desired bound for $\nabla e^{t\Delta
_{g}}u_{0}$ as an operator from $\mathcal{M}_{p,\lambda }^{g}$ to $\mathcal{M%
}_{q,\lambda }^{g}.$

Lastly, we remark that the proof for the case $u_{0}\in \mathcal{M}%
_{p,\lambda }$ (item $(ii)$) proceeds in an analogous manner and is left to
the reader.\qquad {}\fin

We now turn to the smoothing (gradient) estimates for the semigroup $e^{t(%
\overrightarrow{\Delta }+r)}$. These will be derived using the Weitzenb\"{o}%
ck identity \eqref{weit} together with Duhamel's formula.

\begin{thr}[Smoothing: $\mathcal{M}_{p,\protect\lambda }^{g}\rightarrow
\mathcal{M}_{q,\protect\lambda }^{g}$]
\label{th-s-ric} Let $(M,g)$ be a complete, simply connected, non-compact, $%
m $-dimensional Riemannian manifold without boundary. Let $1\leq p\leq
q<\infty $ and $\lambda \in \lbrack 0,m)$ and denote by $u_{0}$ the initial
data for the heat equation.

\begin{enumerate}
\item[$(i)$] Assume that $M$ has bounded geometry, $-K(m-1)\leq \func{Ric}%
\leq -c_{0}<0$, $\kappa \leq 0,$ and let $u_{0}\in \mathcal{M}_{p,\lambda
}^{g}(\Gamma (TM))$. Then, for all $t>0$, the following estimate holds:
\begin{equation*}
\Vert \nabla e^{t(\overrightarrow{\Delta }+r)}u_{0}\Vert _{g,q,\lambda }\leq
C\left[ d(t)\right] ^{-\frac{1}{2}-\frac{m}{2}\left( \frac{1}{p}-\frac{1}{q}%
\right) }t^{\frac{\lambda }{2}\left( \frac{1}{p}-\frac{1}{q}\right)
}e^{-c_{0}t-\alpha _{p,q}t+k\frac{\lambda }{mq}\gamma _{g}t}\Vert u_{0}\Vert
_{g,p,\lambda },
\end{equation*}%
where $d(t)=\min \{1,t\}$, $\alpha _{p,q}=\min \left\{ \frac{4\delta
_{m}(p-1)}{pq},\frac{\lambda _{1}}{q}\right\} ,$ $k=\sqrt{K}(m-1),$ $\gamma
_{g}=k\left( 1+\frac{\lambda }{m}\right) \left( \frac{1}{4}-c\right) ^{-1}$
with $0<c<\frac{1}{4}$, and $C=C(m,c_{0},K)$.

\item[$(ii)$] Assume that $M$ has bounded geometry, $-K(m-1)\leq \func{Ric}%
\leq -c_{0}<0$, $\kappa \leq 0$, and $u_{0}\in \mathcal{M}_{p,\lambda
}(\Gamma (TM)).$ Then, for all $t>0$, the following estimate holds:
\begin{equation*}
\Vert \nabla e^{t(\overrightarrow{\Delta }+r)}u_{0}\Vert _{q,\lambda }\leq C%
\left[ d(t)\right] ^{-\frac{1}{2}-\frac{m}{2}\left( \frac{1}{p}-\frac{1}{q}%
\right) }t^{\frac{\lambda }{2}\left( \frac{1}{p}-\frac{1}{q}\right) }\left(
1+\gamma t\right) ^{\frac{\lambda }{q}}e^{-c_{0}t-a_{p,q}t}\Vert u_{0}\Vert
_{p,\lambda },
\end{equation*}%
where $\gamma =k\frac{\lambda }{m}\left( \frac{1}{4}-c\right) ^{-1}$ with $%
0<c<\frac{1}{4}$ and $C=C(m,c_{0},K)$.
\end{enumerate}
\end{thr}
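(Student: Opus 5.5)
We follow the strategy used for Theorem \ref{es-s}: first obtain the case $p=q$ with the short-time singularity $t^{-1/2}$, and then reach $p<q$ through the semigroup property combined with the dispersive bound of Theorem \ref{th-cd}. Throughout, $e^{t(\overrightarrow{\Delta }+r)}=e^{t(\Delta _{H}+2r)}$ by the Weitzenb\"{o}ck identity \eqref{weit}. Duhamel's formula relative to the Hodge semigroup then gives
\begin{equation*}
e^{t(\overrightarrow{\Delta }+r)}u_{0}=e^{t\Delta _{H}}u_{0}+2\int_{0}^{t}e^{(t-s)\Delta _{H}}\,r\big(e^{s(\overrightarrow{\Delta }+r)}u_{0}\big)\,ds .
\end{equation*}
Applying $\nabla $ splits the problem into a free term and a Duhamel term.

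\textbf{Free term, $0<t\leq 1$.} By the kernel bound \eqref{es-ho}, $|\nabla e^{t\Delta _{H}}u_{0}|\leq Ct^{-1/2}\int_{M}t^{-m/2}e^{-C_{2}r^{2}/t}|u_{0}(y)|\,dy$. This is a Gaussian majorant of the same form as \eqref{es-h4} on $(0,1]$. The argument of Theorem \ref{th-mm} applies verbatim to it: split at $B_{x}(\varepsilon R)$ when $R\leq 1$ and at $B_{x}(\gamma _{g}t)$ when $R>1$, use the pushforward measure $\rho _{x}$, and apply the volume comparison \eqref{comp}. This yields $\Vert \nabla e^{t\Delta _{H}}u_{0}\Vert _{g,p,\lambda }\leq Ct^{-1/2}\Vert u_{0}\Vert _{g,p,\lambda }$.

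\textbf{Duhamel term, $0<t\leq 1$.} Bounded geometry makes $|\func{Ric}|\leq K(m-1)$ bounded, so $r$ is a bounded zeroth-order operator on $\mathcal{M}_{p,\lambda }^{g}$. Combining the bound just proved with Theorem \ref{th-cd} ($p=q$) for $e^{s(\overrightarrow{\Delta }+r)}$ gives
\begin{equation*}
\int_{0}^{t}(t-s)^{-1/2}\,ds\;\Vert u_{0}\Vert _{g,p,\lambda }\leq C\Vert u_{0}\Vert _{g,p,\lambda }.
\end{equation*}
Hence $\Vert \nabla e^{t(\overrightarrow{\Delta }+r)}u_{0}\Vert _{g,p,\lambda }\leq Ct^{-1/2}\Vert u_{0}\Vert _{g,p,\lambda }$ for $t\leq 1$.

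\textbf{Large time and $p<q$.} For $t>1$, write $\nabla e^{t(\overrightarrow{\Delta }+r)}=\nabla e^{(\overrightarrow{\Delta }+r)}e^{(t-1)(\overrightarrow{\Delta }+r)}$. The small-time bound at $t=1$ handles the first factor. Theorem \ref{th-cd} handles the second and supplies the factor $e^{-c_{0}t-\alpha _{p}t+k\frac{\lambda }{mp}\gamma _{g}t}$; this is how the $e^{-c_{0}t}$ enters, via \eqref{es-ric}. For $p<q$, split at $t/2$ as in Theorem \ref{es-s}: apply the $\mathcal{M}_{q,\lambda }^{g}$ gradient bound to $e^{\frac{t}{2}(\overrightarrow{\Delta }+r)}$ composed with the $\mathcal{M}_{p,\lambda }^{g}\rightarrow \mathcal{M}_{q,\lambda }^{g}$ dispersive bound of Theorem \ref{th-cd}. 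The constants incurred by halving $t$ in the exponents are absorbed into $C$ and into the choice of $c$ in $\gamma _{g}$.

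\textbf{Item $(ii)$.} This is identical, using \eqref{es-h3}-type bounds and the $\mathcal{M}_{p,\lambda }$ versions of Theorems \ref{th-mm} and \ref{th-cd}. The extra factor $(1+\gamma t)^{\lambda /q}$ is carried along.

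\textbf{Main obstacle.} I expect the delicate step to be the Duhamel term. The free Hodge semigroup $e^{t\Delta _{H}}$ itself carries no exponential decay in \eqref{es-ho}, so the bootstrap must be confined to $t\leq 1$, where integrability of $(t-s)^{-1/2}$ suffices. All large-time decay must come solely from factoring through $e^{(t-1)(\overrightarrow{\Delta }+r)}$ and Theorem \ref{th-cd}. The other point needing care is checking that the Morrey version of Theorem \ref{th-mm}'s argument indeed holds for the kernel $t^{-(m+1)/2}e^{-C_{2}r^{2}/t}$. There I would re-run the $R\leq 1$ and $R>1$ splittings with $C_{2}$ in place of $1/4$, choosing $c<C_{2}$.
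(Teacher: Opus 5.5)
Your proposal is correct and follows the paper's proof essentially step for step. The shared steps are: the Duhamel expansion of $e^{t(\Delta_H+2r)}$ around $e^{t\Delta_H}$, using the $t^{-1/2}$ bound coming from \eqref{es-ho} and the boundedness of $r$ for $0<t\leq 1$; the factorization $\nabla e^{(\overrightarrow{\Delta}+r)}e^{(t-1)(\overrightarrow{\Delta}+r)}$ together with Theorem \ref{th-cd} for $t>1$; and the splitting at $t/2$ for $p<q$. One minor remark: at that split no adjustment of $c$ is needed, because the two $\gamma_g$ exponents add up exactly and $\alpha_{q,q}\geq\alpha_{p,q}$ whenever $q\geq p$.
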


\noindent {\textbf{Proof.}} We begin with the case $p=q$. Recall first that%
\begin{equation*}
\nabla e^{t(\overrightarrow{\Delta }+r)}=\nabla e^{t(\Delta _{H}+2r)}.
\end{equation*}%
Also, for $0<t\leq 1,$ we can estimate
\begin{equation*}
\Vert \nabla e^{t\Delta _{H}}u^{\flat }\Vert _{g,p,\lambda }\leq \dfrac{C}{%
\sqrt{t}}\Vert u_{0}\Vert _{g,p,\lambda }.
\end{equation*}%
Using Duhamel's formula, we have
\begin{equation*}
\nabla e^{t(\Delta _{H}+2r)}u_{0}^{\flat }=\nabla e^{t\Delta
_{H}}u_{0}^{\flat }+\int_{0}^{t}\nabla e^{(t-s)\Delta _{H}}\left(
2r(e^{s(\Delta _{H}+2r)}u_{0}^{\flat })\right) \,ds,
\end{equation*}%
and then%
\begin{align}
\Vert \nabla e^{t(\Delta _{H}+2r)}u_{0}^{\flat }\Vert _{g,p,\lambda }& \leq
\Vert \nabla e^{t\Delta _{H}}u_{0}^{\flat }\Vert _{g,p,\lambda }+  \notag \\
& \hspace{0.7cm}+\int_{0}^{t}\bigg\|\nabla e^{(t-s)\Delta _{H}}\left(
2r(e^{s(\Delta _{H}+2r)}u_{0}^{\flat })\right) \bigg\|_{g,p,\lambda }\,ds
\notag \\
& \leq \dfrac{C}{\sqrt{t}}\Vert u_{0}\Vert _{g,p,\lambda }+C\int_{0}^{t}%
\dfrac{1}{\sqrt{t-s}}\Vert r(e^{s(\Delta _{H}+2r)}u_{0}^{\flat })\Vert
_{g,p,\lambda }\,ds.  \label{aux-proof-30}
\end{align}%
By the boundedness assumption on the Ricci curvature, it follows that
\begin{align*}
\Vert r(e^{s(\Delta _{H}+2r)}u_{0}^{\flat })\Vert _{g,p,\lambda }& \leq
C\Vert e^{s(\Delta _{H}+2r)}u_{0}^{\flat }\Vert _{g,p,\lambda } \\
& \leq C\Vert u_{0}\Vert _{g,p,\lambda }.
\end{align*}%
Considering this estimate into (\ref{aux-proof-30}), we obtain
\begin{equation}
\Vert \nabla e^{t(\Delta _{H}+2r)}u_{0}^{\flat }\Vert _{g,p,\lambda }\leq
\dfrac{C}{\sqrt{t}}\Vert u_{0}\Vert _{g,p,\lambda }.  \label{aux-proof-31}
\end{equation}%
For $t>1$, using the fact
\begin{equation*}
\nabla e^{t(\overrightarrow{\Delta }+r)}=\nabla e^{\overrightarrow{\Delta }%
+r}\left( e^{(t-1)(\overrightarrow{\Delta }+r)}\right) ,
\end{equation*}%
estimate (\ref{aux-proof-31}) with $t=1,$ and Theorem \ref{th-cd}, we can
estimate
\begin{align}
\Vert \nabla e^{t(\overrightarrow{\Delta }+r)}u_{0}\Vert _{g,p,\lambda }&
=\left\Vert \nabla e^{\overrightarrow{\Delta }+r}\left( e^{(t-1)(%
\overrightarrow{\Delta }+r)}\right) u_{0}\right\Vert _{g,p,\lambda }  \notag
\\
& \leq C\left\Vert e^{(t-1)(\overrightarrow{\Delta }+r)}u_{0}\right\Vert
_{g,p,\lambda }  \notag \\
& \leq Ce^{-c_{0}t-\alpha _{p}t+k\frac{\lambda }{mp}\gamma _{g}t}\left\Vert
u_{0}\right\Vert _{g,p,\lambda }.  \label{aux-proof-32}
\end{align}%
When $p<q$, we can invoke Theorem \ref{th-cd} in conjunction with the
semigroup decomposition%
\begin{equation*}
\nabla e^{t(\overrightarrow{\Delta }+r)}u_{0}=\nabla e^{\frac{t}{2}(%
\overrightarrow{\Delta }+r)}\left( e^{\frac{t}{2}(\overrightarrow{\Delta }%
+r)}u_{0}\right) .
\end{equation*}%
This decomposition enables us to combine the dispersive estimate for $e^{%
\frac{t}{2}(\overrightarrow{\Delta }+r)}$ (see Theorem \ref{th-cd}), which
maps $\mathcal{M}_{p,\lambda }^{g}$ to $\mathcal{M}_{q,\lambda }^{g}$ with
the gradient bound \ref{aux-proof-32} derived above. In this way, we obtain
the desired operator bound for $\nabla e^{t(\overrightarrow{\Delta }+r)}$,
actiong from $\mathcal{M}_{p,\lambda }^{g}$ to $\mathcal{M}_{q,\lambda }^{g}$%
.

Finally, we note that the proof for the case $u_{0}\in \mathcal{M}%
_{p,\lambda }$ (item $(ii)$) follows similarly and is left to the reader. %
\fin

\begin{rem}
\hspace{0.1cm} \label{r-sm}

\begin{enumerate}
\item[$(i)$] Under the hypotheses of Theorem \ref{th-s-ric}, for $\lambda $
sufficiently small, there exist $C>0$ and $\beta _{p,q}>0$ such that
\begin{equation*}
\Vert \nabla e^{t(\overrightarrow{\Delta }+r)}u_{0}\Vert _{g,q,\lambda }\leq
C\left[ d(t)\right] ^{-\frac{1}{2}-\frac{m}{2}\left( \frac{1}{p}-\frac{1}{q}%
\right) }t^{\frac{\lambda }{2}\left( \frac{1}{p}-\frac{1}{q}\right)
}e^{-\beta _{p,q}t}\Vert u_{0}\Vert _{g,p,\lambda },\hspace{0.3cm}\text{for
all }t>0;
\end{equation*}%
while, for each $\lambda \in \lbrack 0,m),$ we have the estimate
\begin{equation*}
\Vert \nabla e^{t(\overrightarrow{\Delta }+r)}u_{0}\Vert _{q,\lambda }\leq C%
\left[ d(t)\right] ^{-\frac{1}{2}-\frac{m}{2}\left( \frac{1}{p}-\frac{1}{q}%
\right) }t^{\frac{\lambda }{2}\left( \frac{1}{p}-\frac{1}{q}\right) }\left(
1+\gamma t\right) ^{\frac{\lambda }{q}}e^{-\beta _{p,q}t}\Vert u_{0}\Vert
_{p,\lambda },\text{ for all }t>0.
\end{equation*}

\item[$(ii)$] With appropriate modifications, the estimates derived in this
section can also be established for the Morrey spaces $\mathcal{M}%
_{p,\lambda }^{K}$ and $\mathcal{M}_{p,\lambda }^{e}$, as defined in
Definition \ref{morrey}.

\item[$(iii)$] By combining the comparison between the semigroups associated
with the Hodge and Beltrami Laplacians (see \eqref{hb}) with the pointwise
estimate of the heat kernel (see \eqref{es-ho}), we can derive analogous
versions of the estimates established in this section for the semigroup $%
e^{t\Delta _{H}} $ and its gradient $\nabla e^{t\Delta _{H}}$ associated
with the Hodge operator.\vspace{0.5cm}
\end{enumerate}
\end{rem}

\section{Riesz transform}

\label{s5}

In this section, we study the Riesz transform $\nabla (-\Delta _{g})^{-\frac{%
1}{2}}$ under the hypotheses previously introduced, relying on the estimates
already established for the associated semigroup. As in the Euclidean
setting, on manifolds without boundary the corresponding Leray--Helmholtz
projection $\mathbb{P}$ can be analyzed through the Riesz transform. In
particular, in the context of the Navier-Stokes equations (\ref{ns-2}), we
are naturally led to consider the operator
\begin{equation*}
Bu=-2\,\func{grad}(-\Delta _{g})^{-1}\func{div}(r(u)),
\end{equation*}%
as well as the projection operator
\begin{equation*}
\mathbb{P}v=v+\func{grad}(-\Delta _{g})^{-1}\func{div}(v).
\end{equation*}%
Then, the boundedness of the Riesz transform will play a fundamental role.
To establish this property, we need the formal identity (see \cite%
{Strichartz} and \cite{Auscher2004})
\begin{equation}
\nabla (-\Delta _{g})^{-\frac{1}{2}}=\Gamma \left( \dfrac{1}{2}\right)
^{-1}\int_{0}^{\infty }t^{-\frac{1}{2}}\nabla e^{t\Delta _{g}}\,dt.
\label{ri}
\end{equation}%
Notice that the boundedness of the Riesz transform $\nabla (-\Delta _{g})^{-%
\frac{1}{2}},$ together with the Ricci curvature condition, implies the
boundedness of both $B$ and $\mathbb{P}$. Indeed, for the projection
operator, we can write
\begin{align*}
\mathbb{P}& =I+\func{grad}(-\Delta _{g})^{-1}\func{div} \\
& =I+\func{grad}(-\Delta _{g})^{-\frac{1}{2}}(-\Delta _{g})^{-\frac{1}{2}}%
\func{div}.
\end{align*}%
Thus, to control $\mathbb{P}$, it suffices to understand the operators $%
(-\Delta _{g})^{-\frac{1}{2}}\func{div}$ and $\func{grad}(-\Delta _{g})^{-%
\frac{1}{2}}.$ In turn, observe that estimating these two operators is
equivalent to studying the Riesz transform $\nabla (-\Delta _{g})^{-\frac{1}{%
2}}$. In fact, we have that
\begin{align*}
(-\Delta _{g})^{-\frac{1}{2}}\func{div}(u)& =\Gamma \left( \dfrac{1}{2}%
\right) ^{-1}\int_{0}^{\infty }t^{-\frac{1}{2}}e^{t\Delta _{g}}\func{div}%
(u)\,dt \\
& =\Gamma \left( \dfrac{1}{2}\right) ^{-1}\int_{0}^{\infty }t^{-\frac{1}{2}%
}\int_{M}G(t,x,y)\func{div}(u(y))\,dy\,dt \\
& =-\Gamma \left( \dfrac{1}{2}\right) ^{-1}\int_{0}^{\infty }t^{-\frac{1}{2}%
}\int_{M}\nabla _{y}G(t,x,y)\cdot u(y)\,dy\,dt
\end{align*}%
and
\begin{align*}
\nabla (-\Delta _{g})^{-\frac{1}{2}}u& =\Gamma \left( \dfrac{1}{2}\right)
^{-1}\int_{0}^{\infty }t^{-\frac{1}{2}}\nabla e^{t\Delta _{g}}u\,dt \\
& =\Gamma \left( \dfrac{1}{2}\right) ^{-1}\int_{0}^{\infty }t^{-\frac{1}{2}%
}\int_{M}\nabla _{x}G(t,x,y)u(y)\,dy\,dt.
\end{align*}%
Therefore, to get the boundedness of $\mathbb{P}$, it is enough to obtain
the boundedness of the Riesz transform, and similarly for the operator $B$.

The following theorem establishes the boundedness of $\nabla (-\Delta
_{g})^{-\frac{1}{2}}$. The result can also be adapted to the spaces $%
\mathcal{M}_{p,\lambda }^{K}$ and $\mathcal{M}_{p,\lambda }^{e}$.

\begin{thr}
\label{th-ri}Let $(M,g)$ be a complete, simply connected, non-compact, $m$%
-dimensional Riemannian manifold without boundary.

\begin{enumerate}
\item[$(i)$] If $M$ has bounded geometry, $-K(m-1)\leq \func{Ric}\leq
-c_{0}<0,$ and $\kappa \leq 0$, then the Riesz transform $\nabla (-\Delta
_{g})^{-\frac{1}{2}}:\mathcal{M}_{p,\lambda }^{g}(M)\rightarrow \mathcal{M}%
_{p,\lambda }^{g}(\Gamma (TM))$ is bounded for every $1<p<\infty $ and every
sufficiently small $\lambda \in \lbrack 0,m)$; that is, there exists a
constant $C>0$ such that
\begin{equation*}
\Vert \nabla (-\Delta _{g})^{-\frac{1}{2}}u\Vert _{g,p,\lambda }\leq C\Vert
u\Vert _{g,p,\lambda },\text{ \ \ \ }\forall u\in \mathcal{M}_{p,\lambda
}^{g}(M).
\end{equation*}

\item[$(ii)$] If $M$ has bounded geometry, $\func{Ric}\geq -K(m-1)$, $K>0,$
and $\kappa \leq 0$, then the Riesz transform $\nabla (-\Delta _{g})^{-\frac{%
1}{2}}:\mathcal{M}_{p,\lambda }(M)\rightarrow \mathcal{M}_{p,\lambda
}(\Gamma (TM))$ is bounded for every $1<p<\infty $ and $\lambda \in \lbrack
0,m)$; that is, there exists a constant $C>0$ such that
\begin{equation*}
\Vert \nabla (-\Delta _{g})^{-\frac{1}{2}}u\Vert _{p,\lambda }\leq C\Vert
u\Vert _{p,\lambda },\text{ \ \ \ }\forall u\in \mathcal{M}_{p,\lambda }(M).
\end{equation*}
\end{enumerate}
\end{thr}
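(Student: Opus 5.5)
We will use the subordination formula \eqref{ri} and write
\begin{equation*}
\nabla (-\Delta _{g})^{-\frac{1}{2}}u=\Gamma \left( \tfrac{1}{2}\right) ^{-1}\left( \int_{0}^{1}+\int_{1}^{\infty }\right) t^{-\frac{1}{2}}\nabla e^{t\Delta _{g}}u\,dt=:T_{0}u+T_{\infty }u.
\end{equation*}
The two pieces are treated differently: the large-time part by the exponential decay in Theorem \ref{es-s}, and the small-time part by a local Calder\'{o}n--Zygmund argument.

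\textbf{Large-time part.} For $T_{\infty }$ we apply Theorem \ref{es-s} with $q=p$, so $d(t)=1$. In case $(i)$ this gives
\begin{equation*}
\Vert t^{-1/2}\nabla e^{t\Delta _{g}}u\Vert _{g,p,\lambda }\leq Ct^{-1/2}e^{-(\alpha _{p}-k\frac{\lambda }{mp}\gamma _{g})t}\Vert u\Vert _{g,p,\lambda }.
\end{equation*}
Since $1<p$, $c_{0}>0$ and $\lambda _{1}>0$ (by \cite{setti}), we have $\alpha _{p}>0$. Because $\gamma _{g}$ is bounded as $\lambda \to 0$, the exponent is negative once $\lambda$ is small enough. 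This is exactly where the hypothesis ``$\lambda$ sufficiently small'' enters (cf.\ Remark \ref{r-sm}). The integral over $(1,\infty )$ then converges in the Banach space $\mathcal{M}_{p,\lambda }^{g}$. In case $(ii)$ the factor $(1+\gamma t)^{\lambda /p}$ is only polynomial, so the integral converges for every $\lambda \in [0,m)$.

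\textbf{Small-time part.} This is the main obstacle. The bound $\Vert \nabla e^{t\Delta _{g}}\Vert \leq Ct^{-1/2}$ from \eqref{eq-na} only gives $\int_{0}^{1}t^{-1}dt$, which diverges logarithmically, so cancellation has to be used. The plan is as follows.

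\begin{enumerate}
\item[1.] We first obtain $L^{p}$-boundedness of $T_{0}$, $1<p<\infty$. We can take this from the local Riesz transform theory on manifolds with bounded geometry; equivalently, $\nabla (-\Delta _{g}+a)^{-1/2}$ is $L^{p}$-bounded for $a>0$ (Bakry; Auscher--Coulhon--Duong--Hofmann \cite{Auscher2004}).
\item[2.] By \eqref{es-ho}, the kernel of $T_{0}$ satisfies the Calder\'{o}n--Zygmund size bound
\begin{equation*}
|k(x,y)|\leq C\int_{0}^{1}t^{-\frac{m+2}{2}}e^{-C_{2}r^{2}/t}\,dt\leq Cr^{-m}e^{-c'r^{2}}\ \text{ for } r\geq 1,
\end{equation*}
and $|k(x,y)|\leq Cr^{-m}$ for $r<1$. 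Thus $T_{0}$ is a singular integral whose kernel has Gaussian decay away from the diagonal.
\end{enumerate}

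With these two facts, we transfer to Morrey spaces by the standard splitting from Theorem \ref{th-mm}. Fix a ball $B=B_{x_{0}}(R)$ and write $u=u\chi _{2B}+u\chi _{M\setminus 2B}$.

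For the local piece, step 1 gives
\begin{equation*}
\Vert T_{0}(u\chi _{2B})\Vert _{L^{p}(B)}\leq C|B_{x_{0}}(2R)|^{\lambda /(mp)}\Vert u\Vert _{g,p,\lambda },
\end{equation*}
and Lemma \ref{l-comp} (doubling for $R\leq 1$, factor $e^{kR}$ in general) controls $|B_{x_{0}}(2R)|$ by $|B|$.

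For the far piece, take $x\in B$. We bound $|T_{0}(u\chi _{M\setminus 2B})(x)|$ by $\int_{r\geq R}|k|\,|u|$. Using dyadic annuli and the pushforward measure $\rho _{x}$ as in Theorem \ref{th-mi}, this reduces to
\begin{equation*}
\sum_{j}(2^{j}R)^{-m}e^{-c'(2^{j}R)^{2}}|B_{x}(2^{j+1}R)|^{1-\frac{1-\lambda /m}{p}}\Vert u\Vert _{g,p,\lambda }.
\end{equation*}
For $R\leq 1$ the volume growth is polynomial and $\lambda <m$ makes the sum converge. For $R>1$ the Gaussian factor $e^{-c'(2^{j}R)^{2}}$ beats the exponential volume growth from Lemma \ref{lem-v}, so no smallness of $\lambda$ is needed here.

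Finally we multiply by $|B|^{1/p}$ and compare with $|B|^{\lambda /(mp)}$. The required inequality reduces to $|B_{x}(2^{j}R)|/|B|$ being controlled, which follows from Lemma \ref{l-comp} together with the lower bound \eqref{l-ball}.

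For item $(ii)$ the argument is identical, with $R^{m}$ in place of $|B|$, which removes the exponential volume factors. The step we expect to be hardest to make rigorous is the $L^{p}$ input in step 1, together with ensuring that the cutoff at $t=1$ preserves the Calder\'{o}n--Zygmund structure.
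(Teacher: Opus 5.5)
Your time splitting $T_0+T_\infty$ is a legitimate alternative to the paper's organization. The paper splits first in space ($r(x,y)\le 1$ versus $r(x,y)\ge 1$), uses Lohou\'e's global $L^p$ bound for the near-diagonal piece, and splits only the far piece in time. Your treatment of $T_\infty$ and your kernel bounds for $T_0$ are fine. Applying Theorem \ref{es-s} to the full operator $T_\infty$ is also more literal than the paper's use of it for the spatially truncated piece $I_3$.

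\textbf{The gap.} There is a genuine gap in the Morrey transfer of $T_0$ in item $(i)$ for balls with $R>1$. You localize at $2B=B_{x_0}(2R)$ and obtain $\Vert T_0(u\chi_{2B})\Vert_{L^p(B)}\le C|B_{x_0}(2R)|^{\lambda/(mp)}\Vert u\Vert_{g,p,\lambda}$. You then assert that Lemma \ref{l-comp} controls $|B_{x_0}(2R)|$ by $|B|$. It does not: the lemma only gives $|B_{x_0}(2R)|/|B_{x_0}(R)|\le Ce^{kR}$, and this is sharp. On hyperbolic space the ratio grows like $e^{(m-1)\sqrt{\kappa}R}$, and the lower bound \eqref{l-ball} is only polynomial, so it cannot compensate. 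Hence for $\lambda>0$ your local estimate carries the unbounded factor $e^{k\lambda R/(mp)}$, and no uniform bound in $\mathcal{M}^g_{p,\lambda}$ follows. Your final claim that $|B_x(2^jR)|/|B|$ is controlled is equally false, but there it is harmless: with the $2B$ split and $R>1$, the factor $e^{-c'R^2}$ already absorbs $|B|^{1/p}\le Ce^{kR/p}$. In item $(ii)$ the normalization $R^\lambda$ is doubling, so the $2B$ split works there.

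\textbf{The fix.} With exponential volume growth you must localize at scale $\min\{R,1\}$ rather than $R$. This is what the paper does for $R>1$: it uses $u\chi_{B_{x_0}(R+1)}$, for which the first inequality of Lemma \ref{l-comp} gives $|B_{x_0}(R+1)|\le C|B_{x_0}(R)|$.

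This changes the far piece. For $x\in B_{x_0}(R)$ and $y\notin B_{x_0}(R+1)$ you only know $r(x,y)\ge 1$. A pointwise bound $\sup_{x\in B}|T_0(u\chi_{M\setminus B_{x_0}(R+1)})(x)|\le C\Vert u\Vert_{g,p,\lambda}$, multiplied by $|B|^{1/p}$, produces $|B|$ instead of $|B|^{\lambda/m}$ and fails. Instead apply H\"older in $y$ (the Gaussian is integrable against the exponential volume growth), then Tonelli, as in the estimate of $I_4$ in Theorem \ref{th-mm}. Everything then reduces to $\int_{B_{x_0}(R)}\rho_x(r)\,dx\le Cr^{n}e^{k(1+\lambda/m)r}|B_{x_0}(R)|^{\lambda/m}\Vert u\Vert_{g,p,\lambda}^p$, and the factor $e^{-c'r^2}$ absorbs the exponential. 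This is exactly the paper's treatment of $I_2$.

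\textbf{A smaller point, shared with the paper's sketch.} In item $(ii)$ you take $\alpha_p>0$ for granted, but your justification ($c_0>0$ and Setti's bound) is not available under those hypotheses. They include $\mathbb{R}^m$, where $\lambda_1=0$ and $\Vert\nabla e^{t\Delta_g}\Vert\sim t^{-1/2}$. There $\int_1^\infty t^{-1/2}\Vert\nabla e^{t\Delta_g}u\Vert\,dt$ diverges, so the large-time piece cannot be handled by integrating operator norms without an extra assumption guaranteeing exponential decay.
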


\noindent {\textbf{Proof.}} Splitting the integral expression \eqref{ri}, we
can write
\begin{align*}
\nabla (-\Delta _{g})^{-\frac{1}{2}}u=\Gamma \left( \dfrac{1}{2}\right) ^{-1}%
\bigg[& \int_{0}^{\infty }t^{-\frac{1}{2}}\int_{r(x,y)\leq 1}\nabla
_{x}G(t,x,y)u(y)\,dx\,dt+ \\
& +\int_{0}^{1}t^{-\frac{1}{2}}\int_{r(x,y)\geq 1}\nabla
_{x}G(t,x,y)u(y)\,dx\,dt+ \\
& +\int_{1}^{\infty }t^{-\frac{1}{2}}\int_{r(x,y)\geq 1}\nabla
_{x}G(t,x,y)u(y)\,dx\,dt\bigg] \\
& :=I_{1}+I_{2}+I_{3}.
\end{align*}%
For the first integral, we can use the $L^{p}$ boundedness \cite[Corollaire
1.2]{Lohou}. Then, for $R>1,$ we obtain
\begin{align}
\int_{B_{x_{0}}(R)}|I_{1}|^{p}& \leq C\Vert \nabla (-\Delta _{g})^{-\frac{1}{%
2}}u\chi _{B_{x_{0}}(R+1)}\Vert _{p}^{p}  \notag \\
& \leq C|B_{x_{0}}(R+1)|^{\frac{\lambda }{m}}\Vert u\Vert _{g,p,\lambda }^{p}
\notag \\
& \leq C|B_{x_{0}}(R)|^{\frac{\lambda }{m}}\Vert u\Vert _{g,p,\lambda }^{p}.
\label{eq-r1}
\end{align}%
For $R\leq 1$, we apply Tonelli's theorem to estimate%
\begin{align*}
\bigg(\int_{B_{x_{0}}(R)}& |I_{1}|^{p}\bigg)^{\frac{1}{p}}\leq \\
& \leq C\Vert \nabla (-\Delta _{g})^{-\frac{1}{2}}u\chi
_{B_{x_{0}}(2R)}\Vert _{p}+C\left[ \int_{B_{x_{0}}(R)}\left(
\int_{0}^{\infty }\int_{R\leq r(x,y)\leq 1}t^{-\frac{m}{2}-1}e^{-C_{2}\frac{%
r^{2}}{t}}|u(y)|\,dy\,dt\right) ^{p}\,dx\right] ^{\frac{1}{p}} \\
& \leq C|B_{x_{0}}(2R)|^{\frac{\lambda }{mp}}\Vert u\Vert _{g,p,\lambda }+C%
\left[ \int_{B_{x_{0}}(R)}\left( \int_{R\leq r(x,y)\leq 1}\left(
\int_{0}^{\infty }t^{-\frac{m}{2}-1}e^{-C_{2}\frac{r^{2}}{t}}\,dt\right)
|u(y)|\,dy\right) ^{p}\,dx\right] ^{\frac{1}{p}} \\
& \leq C|B_{x_{0}}(R)|^{\frac{\lambda }{mp}}\Vert u\Vert _{g,p,\lambda }+C%
\left[ \int_{B_{x_{0}}(R)}\left( \int_{R\leq r(x,y)\leq
1}r(x,y)^{-m}|u(y)|\,dy\right) ^{p}\,dx\right] ^{\frac{1}{p}}.
\end{align*}%
The integral in the last line can be treated as follows. Let $p^{\prime }$
be the conjugate exponent of $p$ and choose $\alpha ,\beta >0$ such that
\begin{equation*}
\dfrac{\alpha }{p}+\dfrac{\beta }{p^{\prime }}=m\text{ with }\,\alpha
>\lambda ,\,\beta >m.
\end{equation*}%
Then
\begin{align*}
& \int_{R\leq r(x,y)\leq 1}r(x,y)^{-\left( \frac{\alpha }{p}+\frac{\beta }{%
p^{\prime }}\right) }|u(y)|\,dy \\
& \leq C\left( \int_{R\leq r(x,y)\leq 1}r(x,y)^{-\beta }\,dy\right) ^{\frac{1%
}{p^{\prime }}}\left( \int_{R\leq r(x,y)\leq 1}r(x,y)^{-\alpha
}|u(y)|^{p}\,dy\right) ^{\frac{1}{p}} \\
& \leq CR^{\frac{m-\beta }{p^{\prime }}}R^{\frac{\lambda -\alpha }{p}}\Vert
u\Vert _{g,p,\lambda }=CR^{-\frac{m-\lambda }{p}}\Vert u\Vert _{g,p,\lambda
}.
\end{align*}%
Hence, for $R\leq 1,$ we obtain
\begin{align}
\bigg(\int_{B_{x_{0}}(R)}|I_{1}|^{p}\bigg)^{\frac{1}{p}}& \leq
C|B_{x_{0}}(R)|^{\frac{\lambda }{mp}}\Vert u\Vert _{g,p,\lambda }+C\left[
\int_{B_{x_{0}}(R)}\left( R^{-\frac{m-\lambda }{p}}\Vert u\Vert
_{g,p,\lambda }\right) ^{p}\,dx\right] ^{\frac{1}{p}}  \notag \\
& \leq C|B_{x_{0}}(R)|^{\frac{\lambda }{mp}}\Vert u\Vert _{g,p,\lambda }+CR^{%
\frac{m}{p}-\frac{m-\lambda }{p}}\Vert u\Vert _{g,p,\lambda }  \notag \\
& \leq C|B_{x_{0}}(R)|^{\frac{\lambda }{mp}}\Vert u\Vert _{g,p,\lambda }.
\label{eq-r2}
\end{align}%
For the second integral $I_{2}$, in view of the heat kernel estimate %
\eqref{es-ho}, we can estimate
\begin{align*}
|I_{2}|& \leq \int_{0}^{1}t^{-\frac{m}{2}-1}\int_{r(x,y)\geq 1}e^{-C_{2}%
\frac{r^{2}}{t}}|u(y)|\,dy\,dt \\
& \leq \int_{0}^{1}t^{-\frac{m}{2}-1}e^{-\frac{C}{t}}\int_{r(x,y)\geq 1}e^{-C%
\frac{r^{2}}{t}}|u(y)|\,dy\,dt \\
& \leq \int_{0}^{1}t^{-\frac{m}{2}-1}e^{-\frac{C}{t}}\,dt\int_{r(x,y)\geq
1}e^{-C{r^{2}}}|u(y)|\,dy \\
& \leq C\int_{M}e^{-C{r(x,y)^{2}}}|u(y)|\,dy.
\end{align*}%
Using the strategy of Theorem \ref{th-mm} in the estimate%
\begin{equation*}
\int_{B_{x_{0}}(R)}|I_{2}|^{p}\leq \int_{B_{x_{0}}(R)}\left( \int_{M}e^{-C{%
r(x,y)^{2}}}|u(y)|\,dy\right) ^{p}\,dx,
\end{equation*}%
we get
\begin{equation}
\Vert I_{2}\Vert _{g,p,\lambda }\leq C\Vert u\Vert _{g,p,\lambda }.
\label{eq-r3}
\end{equation}%
For the third term $I_{3},$ we have
\begin{equation*}
\Vert I_{3}\Vert _{g,p,\lambda }\leq C\int_{1}^{\infty }t^{-\frac{1}{2}%
}\Vert \nabla e^{t\Delta _{g}}u\Vert _{g,p,\lambda }\,dt.
\end{equation*}%
Using the smoothing estimate of Theorem \ref{es-s} and Remark \ref{r-sm}, it
follows that
\begin{align}
\int_{1}^{\infty }t^{-\frac{1}{2}}\Vert \nabla e^{t\Delta _{g}}u\Vert
_{g,p,\lambda }\,dt& \leq C\left[ \int_{1}^{\infty }e^{-\beta _{p,p}t}\,dt%
\right] \Vert u(x)\Vert _{g,p,\lambda }  \notag \\
& \leq C\Vert u(x)\Vert _{g,p,\lambda }.  \label{eq-r4}
\end{align}%
Finally, combining the estimates \eqref{eq-r1}, \eqref{eq-r2}, \eqref{eq-r3}%
, and \eqref{eq-r4}, we conclude
\begin{equation*}
\Vert \nabla (-\Delta _{g})^{-\frac{1}{2}}u\Vert _{g,p,\lambda }\leq C\Vert
u\Vert _{g,p,\lambda },
\end{equation*}%
as requested. The proof for the case $u\in \mathcal{M}_{p,\lambda }$ (item $%
(ii)$) is similar and is left to the reader.\fin

\section{Navier-Stokes equations}

\label{s6}

The ultimate goal is to establish the well-posedness of the principal
problem, namely the Navier-Stokes equations, by combining the estimates we
have derived in the previous section for the associated semigroups and the
Riesz transform. For simplicity, we present the results in the Morrey space $%
\mathcal{M}_{p,\lambda }^{g}$, as the cases of $\mathcal{M}_{p,\lambda }$, $%
\mathcal{M}_{p,\lambda }^{K}$, and $\mathcal{M}_{p,\lambda }^{e}$ are
analogous. To implement a contraction argument, we will employ a well-known
variant of the classical Banach fixed point theorem for abstract equations
(see, e.g., \cite[Lemma 3.7]{fer}).

\begin{lem}
\label{fix} Let $X$ be a Banach space, $\mathcal{N}$ $:X\times X\rightarrow
X $ a bilinear operator, and $\mathcal{T}$ $:X\rightarrow X$ a linear
operator such that, for all $u,v\in X$, there exist constants $0\leq C_{1}<1$
and $C_{2}>0$ satisfying
\begin{equation*}
\Vert \mathcal{N}(u,v)\Vert _{X}\leq C_{2}\Vert u\Vert _{X}\Vert v\Vert _{X}%
\text{ and }\Vert \mathcal{T}u\Vert _{X}\leq C_{1}\Vert u\Vert _{X}.
\end{equation*}%
Then, for every $u_{1}\in X$ such that
\begin{equation*}
\Vert u_{1}\Vert _{X}\leq \varepsilon <\frac{(1-C_{1})^{2}}{4C_{2}},
\end{equation*}%
the sequence defined by
\begin{equation*}
u_{n+1}=u_{1}+\mathcal{N}(u_{n},u_{n})+\mathcal{T}u_{n}
\end{equation*}%
converges to the unique solution of
\begin{equation*}
u=u_{1}+\mathcal{N}(u,u)+\mathcal{T}u\text{ \ \ \ with }\Vert u\Vert
_{X}\leq \frac{2\varepsilon }{1-C_{1}}.
\end{equation*}
\end{lem}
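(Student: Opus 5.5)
The statement to prove is the variant of the Banach fixed point theorem in Lemma \ref{fix}. I would run a contraction argument on a closed ball of $X$. Set $\rho=\frac{2\varepsilon}{1-C_{1}}$ and $B_{\rho}=\{u\in X:\Vert u\Vert_{X}\leq\rho\}$, and define $\Phi(u)=u_{1}+\mathcal{N}(u,u)+\mathcal{T}u$. The sequence in the statement is then $u_{n+1}=\Phi(u_{n})$, starting from $u_{1}$, which lies in $B_{\rho}$ because $\varepsilon\leq\rho$. Existence, uniqueness in $B_{\rho}$, and convergence of the iterates all follow from the contraction mapping principle once I show two things: $\Phi(B_{\rho})\subset B_{\rho}$, and $\Phi$ is a strict contraction on $B_{\rho}$. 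Here $B_{\rho}$ is complete as a closed subset of a Banach space.

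\emph{Invariance.} For $u\in B_{\rho}$ I have
\[
\Vert\Phi(u)\Vert_{X}\leq\varepsilon+C_{2}\rho^{2}+C_{1}\rho .
\]
It therefore suffices that $\varepsilon+C_{2}\rho^{2}\leq(1-C_{1})\rho$. With $\rho=2\varepsilon/(1-C_{1})$ this reads $\varepsilon+4C_{2}\varepsilon^{2}/(1-C_{1})^{2}\leq 2\varepsilon$, i.e. $4C_{2}\varepsilon\leq(1-C_{1})^{2}$. This holds, strictly, by the hypothesis on $\varepsilon$.

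\emph{Contraction.} By bilinearity, $\mathcal{N}(u,u)-\mathcal{N}(v,v)=\mathcal{N}(u-v,u)+\mathcal{N}(v,u-v)$. Hence for $u,v\in B_{\rho}$,
\[
\Vert\Phi(u)-\Phi(v)\Vert_{X}\leq(2C_{2}\rho+C_{1})\Vert u-v\Vert_{X}.
\]
The constant is
\[
2C_{2}\rho+C_{1}=\frac{4C_{2}\varepsilon}{1-C_{1}}+C_{1}<(1-C_{1})+C_{1}=1,
\]
again using $\varepsilon<(1-C_{1})^{2}/(4C_{2})$.

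\emph{Conclusion.} Banach's fixed point theorem on $B_{\rho}$ gives that $u_{n}$ is Cauchy, via the geometric bound $\Vert u_{n+1}-u_{n}\Vert\leq L^{n-1}\Vert u_{2}-u_{1}\Vert$ with $L<1$. Its limit is the unique fixed point $u$ with $\Vert u\Vert_{X}\leq 2\varepsilon/(1-C_{1})$, and continuity of $\Phi$ on $B_{\rho}$ (Lipschitz, by the estimate above) passes the recursion to the limit.

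There is no real obstacle. The only delicate point is to choose the radius $\rho$ so that the single smallness condition on $\varepsilon$ yields both invariance and the strict contraction constant. The choice $\rho=2\varepsilon/(1-C_{1})$ makes both reduce exactly to $4C_{2}\varepsilon<(1-C_{1})^{2}$.
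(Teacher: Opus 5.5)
Your proof is correct. The paper gives no proof of this lemma and only cites it as a known variant of Banach's fixed point theorem; your argument is the standard proof of that variant: invariance of the ball of radius $\rho=2\varepsilon/(1-C_{1})$, plus the Lipschitz bound $2C_{2}\rho+C_{1}=\frac{4C_{2}\varepsilon}{1-C_{1}}+C_{1}<1$ obtained from $\mathcal{N}(u,u)-\mathcal{N}(v,v)=\mathcal{N}(u-v,u)+\mathcal{N}(v,u-v)$. The paper later uses this same form of contraction constant, e.g.\ $\frac{4\varepsilon}{1-C_{3}(T_{0})}C_{6}(T_{0})+C_{5}(T_{0})<1$ in the proof of Theorem \ref{th-local}.
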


For the well-posedness results, we will consider the hypotheses on the
manifold and the restriction that $\lambda \in \lbrack 0,m)$ is sufficiently
small so that the exponential decay required for the fixed point method is
meaningful. With this in mind, we introduce the following definition for the
results that follow.

\begin{df}
\label{df-m} Let $(M,g)$ be a complete, simply connected, non-compact, $m$%
-dimensional Riemannian manifold without boundary, satisfying the following
assumptions:

\begin{enumerate}
\item[$(i)$] $M$ has bounded geometry;

\item[$(ii)$] Ricci curvature bounds:\subitem$(ii)(a)$ $\func{Ric}\geq
-K(m-1) $, $K>0$; \subitem$(ii)(b)$ $\func{Ric}\leq -c_{0}$, $c_{0}>0$;

\item[$(iii)$] Negative sectional curvature: $\kappa \leq 0$;

\item[$(iv)$] Smallness on $\lambda $: the parameter $\lambda $ is
sufficiently small so that
\begin{equation*}
k^{2}\frac{\lambda }{mp}\left( 1+\frac{\lambda }{m}\right) \left( \frac{1}{4}%
-c\right) ^{-1}\leq \min \left\{ \frac{4\delta _{m}(p-1)}{p^{2}},\frac{%
\lambda _{1}}{p}\right\} ,
\end{equation*}
where $k=\sqrt{K}(m-1),$ $0<c<\frac{1}{4},$ and $\delta _{m}\geq \frac{1}{4}%
\left( c_{0}^{2}+(m-1)(m-2)\kappa ^{\ast }\right) $ with $\kappa ^{\ast
}=\sup_{M}\kappa .$
\end{enumerate}
\end{df}

\subsection{A general class of non-compact manifolds}

\label{s6.2}In this section, we analyze the local-in-time well-posedness of
the Navier-Stokes equations on manifolds that are more general than the
Einstein case. Moreover, we also study the Navier-Stokes system with
modified viscosity.

We consider the more general framework introduced in Definition \ref{df-m}.
In this setting, the operator $B$ does not necessarily vanish. This
additional term creates the difficulty to argue the global-in-time
well-posedness; however, we can obtain the local-in-time well-posedness.

Let us consider the equations
\begin{equation}
\begin{cases}
\partial _{t}u-\overrightarrow{\Delta }u=-\mathbb{P}(\func{div}(u\otimes
u))-Bu+r(u), \\
\func{div}(u)=0, \\
u(0,\cdot )=u_{0}.%
\end{cases}
\label{ns-b}
\end{equation}%
Notice that the operator $(\overrightarrow{\Delta }+r-B)$ does not commute
with the operator $\mathbb{P}$ on $M$. For this reason, we choose to treat
the term $(B-r)u$ together with the projection, in view of the abstract
formulation of the equation given in Lemma \ref{fix}. In view of Duhamel's
principle, system (\ref{ns-b}) is formally equivalent to the integral
equation%
\begin{equation}
u(t)=e^{t\overrightarrow{\Delta }}u_{0}-\int_{0}^{t}e^{(t-\tau )%
\overrightarrow{\Delta }}\mathbb{P}(\func{div}(u\otimes u))(\tau )\,d\tau
-\int_{0}^{t}e^{(t-\tau )\overrightarrow{\Delta }}(B-r)u(\tau )\,d\tau .
\label{int-eqb}
\end{equation}%
Solutions of (\ref{int-eqb}) are called mild solutions of system (\ref{ns-b})

Let $p,q\in \lbrack 1,\infty )$ and $\lambda \in \lbrack 0,m)$ satisfy $%
m-\lambda \leq p<q$, and let $T\in (0,\infty ]$ be an existence time.
Consider the space $\mathcal{C}_{b}(0,T),Z)$ of the bounded continuous
functions from $(0,T)$ to $Z$. \ We seek mild solutions of system (\ref{ns-b}%
) in the functional space
\begin{equation}
CX_{T}:=CM_{T}\cap X_{T}  \label{space-th-local-1}
\end{equation}%
with $CM_{T}=\mathcal{C}_{b}((0,T),\mathcal{M}_{p,\lambda }^{g}(\Gamma
(TM))) $ and
\begin{equation}
X_{T}=\bigg\{u\in L_{loc}^{\infty }((0,T),\mathcal{M}_{q,\lambda
}^{g}(\Gamma (TM)))\,\bigg|\,\left[ d(t)^{\frac{m}{2}}t^{-\frac{\lambda }{2}}%
\right] ^{\left( \frac{1}{p}-\frac{1}{q}\right) }e^{\beta t}\Vert u(t,\cdot
)\Vert _{g,q,\lambda }\in L^{\infty }(0,T)\bigg\},  \label{space1}
\end{equation}%
where the parameter $\beta $ is chosen according to the large-time decay
rate in the smoothing estimate (see Remark \ref{r-sm}). The space $CX_{T}$
is a Banach space when endowed with the norm $\Vert \cdot \Vert
_{CX_{T}}=\Vert \cdot \Vert _{CM_{T}}+\Vert \cdot \Vert _{X_{T}}$ where
\begin{equation}
\Vert u\Vert _{CM_{T}}=\sup_{0<t<{T}}\Vert u(t,\cdot )\Vert _{g,p,\lambda }%
\text{ and }\Vert u\Vert _{X_{T}}=\sup_{0<t<{T}}\left\{ \left[ d(t)^{\frac{m%
}{2}}t^{-\frac{\lambda }{2}}\right] ^{\left( \frac{1}{p}-\frac{1}{q}\right)
}e^{\beta t}\Vert u(t,\cdot )\Vert _{g,q,\lambda }\right\} \text{. }
\label{norm1}
\end{equation}

\begin{thr}[Local well-posedness]
\label{th-local} Let $(M,g)$ be a manifold as in Definition \ref{df-m},
except for condition $(iv)$. Suppose that $p,q\in \lbrack 1,\infty )$ and $%
\lambda \in \lbrack 0,m)$ satisfy $m-\lambda \leq p<q.$ Assume that $%
u_{0}\in \mathcal{M}_{p,\lambda }^{g}(\Gamma (TM))$ with $\func{div}%
(u_{0})=0 $ and $\limsup_{T\rightarrow 0}\Vert e^{t\overrightarrow{\Delta }%
}u_{0}\Vert _{X_{T}}<\varepsilon $ for some sufficiently small $\varepsilon
>0$. Then, there exists $T_{0}>0$ such that the Navier-Stokes equations (\ref%
{ns-b}) admit a unique mild solution $u$ belonging to the space $CX_{T_{0}}$%
. Moreover, the solution $u\rightharpoonup u_{0}$ in the sense of
distributions on $M$ as $t\rightarrow 0^{+}$.
\end{thr}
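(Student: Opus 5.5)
We apply Lemma \ref{fix} in the Banach space $CX_{T}$ with $u_{1}=e^{t\overrightarrow{\Delta }}u_{0}$, the bilinear operator
\begin{equation*}
\mathcal{N}(u,v)(t)=-\int_{0}^{t}e^{(t-\tau )\overrightarrow{\Delta }}\mathbb{P}\func{div}(u\otimes v)(\tau )\,d\tau ,
\end{equation*}
and the linear operator $\mathcal{T}u(t)=-\int_{0}^{t}e^{(t-\tau )\overrightarrow{\Delta }}(B-r)u(\tau )\,d\tau$. Since $r$ is bounded and $e^{t\overrightarrow{\Delta }}=e^{t(\overrightarrow{\Delta }+r)}e^{-tr}$ is not literally available, we instead compare $e^{t\overrightarrow{\Delta }}$ with $e^{t\Delta _{H}}$ through the Weitzenb\"{o}ck identity (\ref{weit}) and (\ref{hb}). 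Alternatively, we use a Duhamel perturbation of $e^{t(\overrightarrow{\Delta }+r)}$ by the bounded term $-r$, which on a finite interval $(0,T]$ costs only a factor $e^{CT}$. In either case the dispersive bounds of Theorems \ref{th-d} and \ref{th-cd} and the smoothing bounds of Theorems \ref{es-s} and \ref{th-s-ric} hold for $e^{t\overrightarrow{\Delta }}$ and $\nabla e^{t\overrightarrow{\Delta }}$ for $0<t\leq T\leq 1$, up to constants. Because the result is local, the exponential weights $e^{\beta t}$ and the smallness condition $(iv)$ are irrelevant on $(0,T)$ with $T\leq 1$, and $d(t)=t$ there. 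The $X_{T}$-weight is then $t^{\frac{m-\lambda }{2}(\frac{1}{p}-\frac{1}{q})}$. The linear part satisfies $\Vert u_{1}\Vert _{CM_{T}}\leq C\Vert u_{0}\Vert _{g,p,\lambda }$ by Theorem \ref{th-cd} with $p=q$. The hypothesis $\limsup_{T\to 0}\Vert u_{1}\Vert _{X_{T}}<\varepsilon$ gives the smallness needed in $X_{T}$.

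For the bilinear term we write $e^{(t-\tau )\overrightarrow{\Delta }}\mathbb{P}\func{div}=\big(e^{(t-\tau )\overrightarrow{\Delta }}\nabla \big)\cdot \mathbb{P}$, moving the divergence onto the semigroup by duality, as is done for $(-\Delta _{g})^{-1/2}\func{div}$ in Section \ref{s5}. We use the boundedness of $\mathbb{P}$ on $\mathcal{M}_{q/2,\lambda }^{g}$ from Theorem \ref{th-ri}; this requires $q/2>1$, so we take $q>2$. By the H\"{o}lder inequality of Lemma \ref{holder}, $\Vert u\otimes v\Vert _{g,q/2,\lambda }\leq \Vert u\Vert _{g,q,\lambda }\Vert v\Vert _{g,q,\lambda }$. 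Applying the smoothing estimate from $\mathcal{M}_{q/2,\lambda }^{g}$ to $\mathcal{M}_{q,\lambda }^{g}$ and to $\mathcal{M}_{p,\lambda }^{g}$ gives
\begin{equation*}
\Vert \mathcal{N}(u,v)(t)\Vert _{g,q,\lambda }\leq C\int_{0}^{t}(t-\tau )^{-\frac{1}{2}-\frac{m-\lambda }{2q}}\tau ^{-(m-\lambda )(\frac{1}{p}-\frac{1}{q})}\,d\tau \,\Vert u\Vert _{X_{T}}\Vert v\Vert _{X_{T}}.
\end{equation*}
The analogous bound holds in $\mathcal{M}_{p,\lambda }^{g}$. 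The Beta integrals converge provided $\frac{m-\lambda }{q}<1$ and $(m-\lambda )(\frac{1}{p}-\frac{1}{q})<1$. They yield $t^{\frac{1}{2}-\frac{m-\lambda }{2p}}$ times the appropriate weight. Since $p\geq m-\lambda$, this factor is at most $T^{\frac{1}{2}-\frac{m-\lambda }{2p}}\leq 1$, so $C_{2}$ is bounded uniformly for $T\leq 1$.

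For $\mathcal{T}$, we use boundedness of $r$ and of $B=-2\,\func{grad}(-\Delta _{g})^{-1}\func{div}\,r$ on $\mathcal{M}_{s,\lambda }^{g}$ (via Theorem \ref{th-ri}) together with the dispersive estimates. This gives $\Vert \mathcal{T}u\Vert _{CX_{T}}\leq CT\Vert u\Vert _{CX_{T}}$, and hence $C_{1}=CT<1$ for small $T$.

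The main obstacle is the critical case $p=m-\lambda$. There $C_{2}$ does not shrink with $T$, so we rely entirely on the smallness $\Vert u_{1}\Vert _{X_{T}}<\varepsilon$, which forces the nonlinearity to be estimated in $X_{T}$ norms alone. The $CM_{T}$ part is also delicate: we plan to bound $\Vert \mathcal{N}(u,v)\Vert _{CM_{T}}$ using only $\Vert u\Vert _{X_{T}}\Vert v\Vert _{X_{T}}$, so that the smallness condition of Lemma \ref{fix} effectively involves only $\Vert u_{1}\Vert _{X_{T}}$. We can then choose $T_{0}$ with $C_{1}<1/2$ and $\varepsilon <(1-C_{1})^{2}/(4C_{2})$.

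Continuity in time follows from strong continuity of the semigroup away from $t=0$ and dominated convergence in the Duhamel integrals. For weak convergence $u\rightharpoonup u_{0}$, we test against $\varphi \in C_{c}^{\infty }$. The term $\langle e^{t\overrightarrow{\Delta }}u_{0},\varphi \rangle \to \langle u_{0},\varphi \rangle$ by self-adjointness and $e^{t\overrightarrow{\Delta }}\varphi \to \varphi$ in $L^{p^{\prime }}$ on compacts with uniform Gaussian tails. The Duhamel terms are $O(t^{\theta })$ in $\mathcal{M}_{p,\lambda }^{g}$-pairing against $\varphi$ by the bounds above, with some $\theta >0$ after testing (in the critical case, after moving a derivative onto $\varphi$).
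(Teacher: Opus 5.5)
Your skeleton is the paper's: Lemma \ref{fix} with $u_{1}=e^{t\overrightarrow{\Delta }}u_{0}$ and the same $\mathcal{N}$, $\mathcal{T}$. It uses the same $X_{T}$ bound for $\mathcal{N}$, via smoothing $\mathcal{M}_{q/2,\lambda }^{g}\to \mathcal{M}_{q,\lambda }^{g}$ with the factor $t^{\frac{1}{2}-\frac{m-\lambda }{2p}}$, the same bound $\mathcal{T}=O(CT)$, and the same smallness from the $\limsup$ hypothesis. Your Duhamel comparison of $e^{t\overrightarrow{\Delta }}$ with $e^{t(\overrightarrow{\Delta }+r)}$ on $(0,1]$ is more careful than the paper, which applies Theorem \ref{th-s-ric} to $e^{t\overrightarrow{\Delta }}$ without comment.

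The gap is in the $CM_{T}$ component. You say the analogous bound holds in $\mathcal{M}_{p,\lambda }^{g}$, and you plan to control $\Vert \mathcal{N}(u,v)\Vert _{CM_{T}}$ by $\Vert u\Vert _{X_{T}}\Vert v\Vert _{X_{T}}$ alone. That requires smoothing from $\mathcal{M}_{q/2,\lambda }^{g}$ into $\mathcal{M}_{p,\lambda }^{g}$. Theorems \ref{es-s} and \ref{th-s-ric} only raise the integrability exponent, so this is available only when $q\leq 2p$, while the statement allows every $q>p$. For $q>2p$ no embedding helps. On a ball $B$, H\"{o}lder gives only $|B|^{-\frac{\lambda }{mp}}\Vert f\Vert _{L^{p}(B)}\leq |B|^{\frac{m-\lambda }{m}(\frac{1}{p}-\frac{2}{q})}\Vert f\Vert _{g,q/2,\lambda }$, with a positive power of $|B|$. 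In fact $\mathcal{M}_{q/2,\lambda }^{g}\nsubseteq \mathcal{M}_{p,\lambda }^{g}$: on hyperbolic space, $e^{-a\,r(\cdot ,y)}$ with $\frac{2k(m-\lambda )}{mq}<a<\frac{k(m-\lambda )}{mp}$ lies in the first space but not the second. A short-time heat flow does not repair behaviour on large balls. Even for $q\leq 2p$, Lemma \ref{fix} cannot be applied verbatim in $CX_{T}$, because $\Vert u_{1}\Vert _{CM_{T}}$ is of the size of $\Vert u_{0}\Vert _{g,p,\lambda }$, not small. A weighted norm $\Vert \cdot \Vert _{X_{T}}+\delta \Vert \cdot \Vert _{CM_{T}}$ would make your route work in that range.

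The missing idea is that the $CM_{T}$ bound need not be purely in $X_{T}$ norms. The paper takes $\frac{1}{r}=\frac{1}{p}+\frac{1}{q}$ and uses Lemma \ref{holder} in mixed form, $\Vert u\otimes v\Vert _{g,r,\lambda }\leq \Vert u\Vert _{g,q,\lambda }\Vert v\Vert _{g,p,\lambda }$. It then smooths $\mathcal{M}_{r,\lambda }^{g}\to \mathcal{M}_{p,\lambda }^{g}$, which is allowed since $r<p$. The same Beta integral gives $\Vert \mathcal{N}(u,v)\Vert _{CM_{T}}\leq C\,T^{\frac{1}{2}-\frac{m-\lambda }{2p}}\Vert u\Vert _{X_{T}}\Vert v\Vert _{CM_{T}}$ for every $q>p$, and symmetrically in $u,v$. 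This bound is linear in the non-small $CM_{T}$ factor, so one first runs Lemma \ref{fix} in $X_{T}$ alone. One then shows the Picard iterates are Cauchy in $CM_{T_{0}}$ using $\Vert u_{n}\Vert _{X_{T_{0}}}\leq 2\varepsilon /(1-C_{1})$, which yields $\Vert u_{n+1}-u_{n}\Vert _{CM_{T_{0}}}\leq (\frac{4C\varepsilon }{1-C_{1}}+CT_{0})\Vert u_{n}-u_{n-1}\Vert _{CM_{T_{0}}}$. This also removes your worry about the critical case $p=m-\lambda$: smallness of $\Vert u_{0}\Vert _{g,p,\lambda }$ is never needed.

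One further caution: working locally in time does not make condition $(iv)$ irrelevant. Theorem \ref{th-ri}$(i)$, which you use for $\mathbb{P}$ and $B$, is proved only for small $\lambda$, because its term $I_{3}$ integrates $t^{-1/2}\Vert \nabla e^{t\Delta _{g}}u\Vert _{g,p,\lambda }$ over $t\in (1,\infty )$. The paper's proof relies on the same theorem, so this issue is shared with the paper rather than a deviation from it.
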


\noindent {\textbf{Proof.}} In view of the mild formulation (\ref{int-eqb}),
we can write
\begin{equation*}
u=u_{1}+\mathcal{N}(u,u)(t)+\mathcal{T}(u)(t),
\end{equation*}%
where $u_{1}=e^{t\overrightarrow{\Delta }}u_{0},$
\begin{equation*}
\mathcal{T}(u)(t)=-\int_{0}^{t}e^{(t-\tau )\overrightarrow{\Delta }%
}(B-r)u(\tau )\,d\tau ,
\end{equation*}%
and
\begin{equation*}
\mathcal{N}(u,u)(t)=-\int_{0}^{t}e^{(t-\tau )\overrightarrow{\Delta }}%
\mathbb{P}(\func{div}(u\otimes u))(\tau )\,d\tau .
\end{equation*}

Next, note that the operator $\overrightarrow{\Delta }$ has the property of
commuting with the projection $\mathbb{P}$ as long as $\partial
M=\varnothing $. Consequently, we can first apply the smoothing estimates
and then invoke the boundedness of $\mathbb{P}$ (see Theorem \ref{th-ri}).
Setting%
\begin{equation*}
\mathcal{S}(u,u)(t)=a_{1}\mathcal{N}(u,u)(t)+a_{2}\mathcal{T}(u)(t),\text{
for }a_{i}\in \{0,1\},
\end{equation*}%
we estimate
\begin{align*}
\Vert \mathcal{S}(u,u)& (t)\Vert _{g,q,\lambda }\leq
C_{2}a_{1}\int_{0}^{t}[d(t-\tau )]^{-\left( \frac{1}{2}+\frac{m}{2q}\right)
}(t-\tau )^{\frac{\lambda }{2q}}e^{-\beta (t-\tau )}\Vert u\otimes u(\tau
)\Vert _{g,q/2,\lambda }\,d\tau \\
& +C_{1}a_{2}\int_{0}^{t}e^{-\beta (t-\tau )}\Vert u(\tau )\Vert
_{g,q,\lambda }\,d\tau \\
\leq & \,C_{2}a_{1}\int_{0}^{t}[d(t-\tau )]^{-\left( \frac{1}{2}+\frac{m}{2q}%
\right) }(t-\tau )^{\frac{\lambda }{2q}}e^{-\beta (t-\tau )}\left( \left[
d(\tau )^{-\frac{m}{2}}\tau ^{\frac{\lambda }{2}}\right] ^{\left( \frac{1}{p}%
-\frac{1}{q}\right) }\right) ^{2}\,d\tau \Vert u\Vert _{X_{T}}^{2} \\
& +C_{1}a_{2}\int_{0}^{t}\left[ d(\tau )^{-\frac{m}{2}}\tau ^{\frac{\lambda
}{2}}\right] ^{\left( \frac{1}{p}-\frac{1}{q}\right) }e^{-\beta t}\,d\tau
\Vert u\Vert _{X_{T}} \\
\leq & C_{2}e^{-\beta t}a_{1}\int_{0}^{t}[d(t-\tau )]^{-\left( \frac{1}{2}+%
\frac{m}{2q}\right) }(t-\tau )^{\frac{\lambda }{2q}}\left[ d(\tau )^{-\frac{m%
}{2}}\tau ^{\frac{\lambda }{2}}\right] ^{2\left( \frac{1}{p}-\frac{1}{q}%
\right) }e^{-\beta \tau }\,d\tau \Vert u\Vert _{X_{T}}^{2} \\
& +C_{1}a_{2}\int_{0}^{t}\left[ d(\tau )^{-\frac{m}{2}}\tau ^{\frac{\lambda
}{2}}\right] ^{\left( \frac{1}{p}-\frac{1}{q}\right) }e^{-\beta t}\,d\tau
\Vert u\Vert _{X_{T}}.
\end{align*}%
Using the condition $q>p\geq m-\lambda ,$ we arrive at
\begin{equation}
\Vert \mathcal{S}(u,u)\Vert _{X_{T}}\leq C_{4}(T)a_{1}\Vert u\Vert
_{X_{T}}^{2}+C_{3}(T)a_{2}\Vert u\Vert _{X_{T}}\text{, for }a_{i}\in \{0,1\}.
\label{aux-proof-th-local-1}
\end{equation}%
Note that we can take $C_{3}(T_{0})<1$, because
\begin{equation*}
C_{1}\int_{0}^{T_{0}}\left[ d(\tau )^{-\frac{m}{2}}\tau ^{\frac{\lambda }{2}}%
\right] ^{\left( \frac{1}{p}-\frac{1}{q}\right) }e^{-\beta t}\,d\tau <1,%
\text{ for some }T_{0}>0.
\end{equation*}%
Then, by assumption $\lim_{T\rightarrow 0}\Vert e^{t\overrightarrow{\Delta }%
}u_{0}\Vert _{X_{T}}<\varepsilon ,$ estimate (\ref{aux-proof-th-local-1}),
and Lemma \ref{fix}, we obtain the local-well-posedness in the space $%
X_{T_{0}}$.

It remains to show that the unique local solution satisfies $u\in CM_{T_{0}}=%
\mathcal{C}_{b}((0,T_{0}),\mathcal{M}_{p,\lambda }^{g}(\Gamma (TM)))$ for
some $T_{0}>0$. For that, we estimate the $CM_{T}$-component of the norm of $%
CX_{T}$. Choosing $\frac{1}{r}=\frac{1}{p}+\frac{1}{q}$ and using H\"{o}%
lder's inequality (\ref{holder}), we obtain
\begin{align*}
\Vert \mathcal{S}& (u,u)(t)\Vert _{g,p,\lambda }\leq
C_{2}a_{1}\int_{0}^{t}[d(t-\tau )]^{-\left( \frac{1}{2}+\frac{m}{2}\left(
\frac{1}{r}-\frac{1}{p}\right) \right) }(t-\tau )^{\frac{\lambda }{2}\left(
\frac{1}{r}-\frac{1}{p}\right) }e^{-\beta (t-\tau )}\Vert u\otimes u(\tau
)\Vert _{g,r,\lambda }\,d\tau \\
& \hspace{1cm}+C_{1}a_{2}\int_{0}^{t}e^{-\beta (t-\tau )}\Vert u(\tau )\Vert
_{g,p,\lambda }\,d\tau \\
& \leq C_{2}a_{1}\Vert u\Vert _{X_{T}}\Vert u\Vert
_{CM_{T}}\int_{0}^{t}[d(t-\tau )]^{-\left( \frac{1}{2}+\frac{m}{2q}\right)
}(t-\tau )^{\frac{\lambda }{2q}}[d(\tau )^{-\frac{m}{2}}\tau ^{\frac{\lambda
}{2}}]^{\left( \frac{1}{p}-\frac{1}{q}\right) }e^{-\beta t}\,d\tau \\
& \hspace{1cm}+C_{1}a_{2}\int_{0}^{t}e^{-\beta (t-\tau )}\Vert u(\tau )\Vert
_{g,p,\lambda }\,d\tau \\
& \leq C_{6}(T)a_{1}\Vert u\Vert _{X_{T}}\Vert u\Vert
_{CM_{T}}+C_{5}(T)a_{2}\Vert u\Vert _{CM_{T}}\text{, for }a_{i}\in \{0,1\}.
\end{align*}%
As before, by possibly reducing $T_{0}$, we may ensure that $C_{5}(T_{0})<1$%
. Moreover, by Lemma \ref{fix} and (\ref{aux-proof-th-local-1}), the
solution $u$ is obtained as the limit of the sequence $\{u_{n}\}_{n\in
\mathbb{N}}$ defined by
\begin{equation*}
u_{n+1}(t)=u_{1}(t)+\mathcal{N}(u_{n},u_{n})(t)+\mathcal{T}u_{n}(t),
\end{equation*}%
where
\begin{equation*}
\Vert u_{n}\Vert _{X_{T}}\leq \frac{2\varepsilon }{1-C_{3}(T_{0})}\text{ for
}n\geq 1.
\end{equation*}%
So, we can estimate
\begin{align*}
\Vert u_{n+1}(t)-u_{n}(t)\Vert _{g,p,\lambda }& \leq \Vert \mathcal{N}%
(u_{n},u_{n})(t)-\mathcal{N}(u_{n-1},u_{n-1})(t)\Vert _{g,p,\lambda }+\Vert
\mathcal{T}\left( u_{n}(t)-u_{n-1}(t)\right) \Vert _{g,p,\lambda } \\
& \leq C_{6}(T_{0})\Vert u_{n}-u_{n-1}\Vert _{CM_{T_{0}}}\left( \Vert
u_{n}\Vert _{X_{T_{0}}}+\Vert u_{n-1}\Vert _{X_{T_{0}}}\right)
+C_{5}(T_{0})\Vert u_{n}-u_{n-1}\Vert _{CM_{T_{0}}} \\
& \leq \left[ \frac{4\varepsilon }{1-C_{3}(T_{0})}C_{6}(T_{0})+C_{5}(T_{0})%
\right] \Vert u_{n}-u_{n-1}\Vert _{CM_{T_{0}}}.
\end{align*}%
By reducing $\varepsilon >0$ and $T_{0}>0$ (if necessary), we ensure that
\begin{equation*}
C_{9}(T_{0})=\frac{4\varepsilon }{1-C_{3}(T_{0})}C_{6}(T_{0})+C_{5}(T_{0})<1.
\end{equation*}
It follows that $\{u_{n}\}_{n\in \mathbb{N}}$ is a Cauchy sequence in $%
CM_{T_{0}}$and hence the limit $u\in CM_{T_{0}}$, as desired.

To conclude, we observe that time-continuity for $t\in (0,T_{0})$ and the
convergence $u\rightharpoonup u_{0}$ in $\mathcal{D}^{\prime }(M)$ as $%
t\rightarrow 0^{+}$ follow from standard arguments in the theory of mild
solutions and abstract semigroups (see, e.g., \cite{Fer2},\cite{Pazy}). We
omit the details and leave them to the reader.\fin

\subsection{The modified case with viscosity}

\label{s.visc} The problem with the viscosity \eqref{ns1} can be modified as
follows
\begin{equation*}
\begin{cases}
\partial _{t}u+\nabla _{u}u+\dfrac{1}{\rho }\func{grad}\,p=\mathcal{L}_{\nu
_{1},\nu _{2}}(u), \\
\func{div}(u)=0, \\
u(0,\cdot )=u_{0},%
\end{cases}%
\end{equation*}%
where $\mathcal{L}_{\nu _{1},\nu _{2}}(u)=\nu _{1}\overrightarrow{\Delta }%
u+\nu _{2}r(u)$. If we have sufficient control over $\nu _{1}$ and $\nu _{2}$%
, then the well-posedness of the problem can be established. Moreover, we
can rewrite
\begin{equation*}
\mathcal{L}_{\nu _{1},\nu _{2}}(u)=\nu _{1}{\Delta }_{H}u+(\nu _{1}+\nu
_{2})r(u),
\end{equation*}%
so that, in the special case $\nu _{2}=-\nu _{1}$, we have $\mathcal{L}_{\nu
_{1},\nu _{2}}(u)=\nu _{1}{\Delta }_{H}u$, which corresponds to the modified
Navier--Stokes equations (see \cite{Czubak}).

Indeed, let us consider, for example, the case $\nu _{1}=\nu $ and $\nu
_{2}=1$. Then we obtain
\begin{equation}  \label{nsv}
\begin{cases}
\partial _{t}u-{\nu }\overrightarrow{\Delta }u-r(u)+B(u)=-\mathbb{P}(\nabla
_{u}u), \\
\func{div}(u)=0, \\
u(0,\cdot )=u_{0}.%
\end{cases}%
\end{equation}%
Next, consider the modified Stokes equations
\begin{equation*}
\begin{cases}
\partial _{t}u-\left( {\nu }\overrightarrow{\Delta }u+r(u)-B(u)\right) =0,
\\
\func{div}(u)=0, \\
u(0,\cdot )=u_{0}.%
\end{cases}%
\end{equation*}%
In order to obtain the well-posedness results, we need the dispersive
estimates for this particular case. This is addressed in the following lemma.

\begin{lem}[Dispersive: $\mathcal{M}_{p,\protect\lambda }^{g}\rightarrow
\mathcal{M}_{q,\protect\lambda }^{g}$]
\label{l-v1} Let $(M,g)$ be a complete, simply connected, non-compact, $m$%
-dimensional Riemannian manifold without boundary. Let $1\leq p\leq q<\infty
$ and $\lambda \in \lbrack 0,m)$ and denote by $u_{0}$ the initial data for
the heat equation.

\begin{enumerate}
\item[$(i)$] Assume that $-K(m-1)\leq \func{Ric}\leq -c_{0}<0$, $\kappa \leq
0,$ and let $u_{0}\in \mathcal{M}_{p,\lambda }^{g}(\Gamma (TM))$. Then, for
all $t>0$, the following estimate holds:%
\begin{equation*}
\Vert e^{t(\nu \overrightarrow{\Delta }+r-B)}u_{0}\Vert _{g,q,\lambda }\leq C%
\left[ d(t)^{-\frac{m}{2}}t^{\frac{\lambda }{2}}\right] ^{\left( \frac{1}{p}-%
\frac{1}{q}\right) }e^{-\sigma _{\nu }t+Ct}\Vert u_{0}\Vert _{g,p,\lambda },
\end{equation*}%
where $d(t)=\min \{1,t\}$, $\sigma _{\nu }=c_{0}+\nu \left( \alpha _{p,q}+k%
\frac{\lambda }{mq}\gamma _{g}\right) $, $\alpha _{p,q}=\min \left\{ \frac{%
4\delta _{m}(p-1)}{pq},\frac{\lambda _{1}}{q}\right\} ,$ $k=\sqrt{K}(m-1)$, $%
\gamma _{g}=k\left( 1+\frac{\lambda }{m}\right) \left( \frac{1}{4}-c\right)
^{-1}$ with $0<c<\frac{1}{4}$, and $C=C(m,c_{0},K)$.

\item[$(ii)$] Assume that $-K(m-1)\leq \func{Ric}\leq -c_{0}<0$, $\kappa
\leq 0,$ and let $u_{0}\in \mathcal{M}_{p,\lambda }(\Gamma (TM))$. Then, for
all $t>0$, the following estimate holds:
\begin{equation*}
\Vert e^{t(\nu \overrightarrow{\Delta }+r-B)}u_{0}\Vert _{g,q,\lambda }\leq C%
\left[ d(t)^{-\frac{m}{2}}t^{\frac{\lambda }{2}}\right] ^{\left( \frac{1}{p}-%
\frac{1}{q}\right) }e^{-\sigma _{\nu }t+Ct}\Vert u_{0}\Vert _{p,\lambda },
\end{equation*}%
where $\sigma _{\nu }=\widetilde{c}_{0}+\nu \alpha _{p,q}$ with $0<%
\widetilde{c}_{0}<c_{0}$, and $C=C(m,c_{0},K).$
\end{enumerate}
\end{lem}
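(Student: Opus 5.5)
The plan is to treat the operator $\nu\overrightarrow{\Delta}+r-B$ as a bounded perturbation of $\nu(\overrightarrow{\Delta}+r)$ and to propagate the estimate of Theorem \ref{th-cd} through Duhamel's formula. First, the rescaling $t\mapsto\nu t$ turns Theorem \ref{th-cd} into a bound for $e^{t\nu(\overrightarrow{\Delta}+r)}$ from $\mathcal{M}^g_{p,\lambda}$ to $\mathcal{M}^g_{q,\lambda}$. This bound has the rate $\nu(c_0+\alpha_{p,q})-\nu k\frac{\lambda}{mq}\gamma_g$, and the power factor $[d(\nu t)^{-m/2}(\nu t)^{\lambda/2}]^{1/p-1/q}$ is comparable to the stated one up to constants depending on $\nu$. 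Next, write
\[
\nu\overrightarrow{\Delta}+r-B=\nu(\overrightarrow{\Delta}+r)+\big((1-\nu)r-B\big)=:A_\nu+P .
\]
By the Ricci bounds, $r$ is a bounded zeroth-order operator on every $\mathcal{M}^g_{s,\lambda}$. The term $B=-2\,\func{grad}(-\Delta_g)^{-1/2}(-\Delta_g)^{-1/2}\func{div}\,r$ is bounded on $\mathcal{M}^g_{s,\lambda}$ by Theorem \ref{th-ri} and the discussion before it, applied twice (once directly and once in dual form); this is available for $1<s<\infty$ and $\lambda$ small. The constant $c_0$ in the exponent should come from the lower bound $-r\ge c_0$ (estimate \eqref{es-ric}) rather than from the rescaled $\nu c_0$. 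So I will keep $r$ together with $\nu\overrightarrow{\Delta}$, using $|e^{t(\nu\overrightarrow{\Delta}+r)}u_0|\le e^{t(\nu\Delta_g-c_0)}|u_0|$, which is the analogue of \eqref{es-ric} proved by the same Kato-type argument. Combined with Theorem \ref{th-d} at time $\nu t$, this gives the exponent $-c_0 t$ plus the $\nu$-scaled Beltrami rate.

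Then I will write the Duhamel identity
\[
e^{t(\nu\overrightarrow{\Delta}+r-B)}u_0=e^{t(\nu\overrightarrow{\Delta}+r)}u_0-\int_0^t e^{(t-s)(\nu\overrightarrow{\Delta}+r)}B\,e^{s(\nu\overrightarrow{\Delta}+r-B)}u_0\,ds .
\]
For the case $p=q$, set $\phi(t)=e^{\sigma t}\|e^{t(\cdot)}u_0\|_{g,p,\lambda}$. The $\mathcal{M}^g_{p,\lambda}\to\mathcal{M}^g_{p,\lambda}$ bound and the boundedness of $B$ give $\phi(t)\le C\|u_0\|+C\|B\|\int_0^t\phi(s)\,ds$, and Gronwall yields $\phi(t)\le C e^{Ct}\|u_0\|$. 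This is exactly the origin of the loss $e^{Ct}$ in the statement. For $p<q$ I will not run Duhamel in the $\mathcal{M}_q$ norm, to avoid integrating the singular weight. Instead I will use the splitting $e^{t L}=e^{\frac t2 L}e^{\frac t2 L}$ with $L=\nu\overrightarrow{\Delta}+r-B$. This needs an $\mathcal{M}_p\to\mathcal{M}_q$ bound for one factor, so I plan to run the Duhamel argument once in the weighted norm
\[
\sup_t\,[d(t)^{-m/2}t^{\lambda/2}]^{-(1/p-1/q)}e^{\sigma t}\|\cdot\|_{g,q,\lambda}.
\]
In this norm the perturbative integral is handled by the $\mathcal{M}_q\to\mathcal{M}_q$ bound for the free semigroup and the already-proved $p$-level bound, via $\|B e^{sL}u_0\|_{g,q,\lambda}\lesssim\|e^{sL}u_0\|_{g,q,\lambda}$. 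A Gronwall with a locally integrable weight closes because the exponent $\frac m2(\frac1p-\frac1q)$ is integrable near $0$ whenever $q$ is not too large. For general $q$ I will iterate the splitting through intermediate exponents, giving a finite chain $p=p_0<p_1<\dots<q$.

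Item $(ii)$ follows the same scheme, with Theorem \ref{th-cd}(ii) in place of (i) and Theorem \ref{th-ri}(ii) for the boundedness of $B$ on $\mathcal{M}_{p,\lambda}$. There, the polynomial factor $(1+\gamma t)^{\lambda/q}$ is absorbed into $e^{(c_0-\widetilde c_0)t}$, which explains $\widetilde c_0<c_0$.

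The main obstacle is the endpoint $p=1$ (or $q$ close to $1$). Theorem \ref{th-ri} does not give boundedness of $B$ on $\mathcal{M}^g_{1,\lambda}$, and the small-$\lambda$ restriction in Theorem \ref{th-ri}(i) is also needed. I would first try to avoid applying $B$ at level $p$: in the Duhamel term, apply the free semigroup from $\mathcal{M}_p$ to $\mathcal{M}_{\tilde p}$ with some $\tilde p>1$ and let $B$ act there. Failing that, I would assume $p>1$ and small $\lambda$ implicitly, as is done in the paper's applications.
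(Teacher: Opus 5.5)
Your argument is correct in outline and shares its core with the paper's proof. Both use Duhamel's formula with $B$ as a bounded perturbation of $e^{t(\nu\overrightarrow{\Delta}+r)}$, and both use Gronwall in the case $p=q$; that is exactly where the loss $e^{Ct}$ in \eqref{eq-v} comes from.

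The routes diverge for $p<q$. The paper never integrates the singular weight. For $0<t\le 2$ it splits the Duhamel integral at $\tau=t/2$. On $[0,t/2]$ it uses the $\mathcal{M}^g_{p,\lambda}\to\mathcal{M}^g_{q,\lambda}$ bound of the free semigroup (there $t-\tau\ge t/2$, so the weight is comparable to the one at time $t$), together with the already proved $\mathcal{M}^g_{p,\lambda}$-bound \eqref{eq-v} for $w(\tau)$. On $[t/2,t]$ it uses the $\mathcal{M}^g_{q,\lambda}\to\mathcal{M}^g_{q,\lambda}$ bound, and there the weights at $\tau$ and $t$ are comparable. This gives $W(t)\le C\|u_0\|_{g,p,\lambda}+C\int_0^t W(\tau)\,d\tau$ with a bounded kernel, for every $p<q$ at once; $t>1$ is then handled by the semigroup property.

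You instead use the $q\to q$ bound on all of $[0,t]$. That forces $\frac{m-\lambda}{2}(\frac1p-\frac1q)<1$ and then a chain of intermediate exponents. It works, but the $t/2$ split removes both the restriction and the chaining, and makes your preliminary factorization $e^{tL}=e^{\frac t2L}e^{\frac t2L}$ unnecessary.

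What your version buys is that $B$ is only applied at integrability levels above $p$, which is what one would need for $p=1<q$. The paper's first-half estimate uses $\|Bw(\tau)\|_{g,p,\lambda}$ instead. For $p=q=1$ neither argument applies. Your caveat is accurate: Theorem \ref{th-ri} gives boundedness only for $1<p<\infty$, and in item $(i)$ only for small $\lambda$. The paper's proof simply asserts that $B$ is bounded on $\mathcal{M}^g_{p,\lambda}$.

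Two smaller points.

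First, your handling of the free part via $|e^{t(\nu\overrightarrow{\Delta}+r)}u_0|\le e^{t(\nu\Delta_g-c_0)}|u_0|$ is more precise than the paper's remark that $e^{t(\nu\overrightarrow{\Delta}+r)}$ is a time rescaling of $e^{t(\overrightarrow{\Delta}+r)}$, which holds only for $\nu=1$. Your route produces the rate $c_0+\nu(\alpha_{p,q}-k\frac{\lambda}{mq}\gamma_g)$, consistent with Theorem \ref{th-cd}. The $+$ in front of $k\frac{\lambda}{mq}\gamma_g$ in the stated $\sigma_\nu$ appears to be a sign slip.

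Second, boundedness of $(-\Delta_g)^{-1/2}\func{div}$ on Morrey spaces cannot be obtained by abstract duality, since the dual of $\mathcal{M}^g_{p,\lambda}$ is not $\mathcal{M}^g_{p',\lambda}$. You have to rerun the local/far kernel splitting with the adjoint kernel $\nabla_yG$, which is what the paper's reduction before Theorem \ref{th-ri} amounts to.
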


\noindent {\textbf{Proof.}} Since $B:\mathcal{M}_{p,\lambda }^{g}\rightarrow
\mathcal{M}_{p,\lambda }^{g}$ is bounded, using Duhamel's formula with $%
w(t)=e^{t(\nu \overrightarrow{\Delta }+r-B)}u_{0},$ we obtain
\begin{equation*}
w(t)=e^{t({\nu }\overrightarrow{\Delta }+r)}u_{0}+\int_{0}^{t}e^{(t-\tau )({%
\nu }\overrightarrow{\Delta }+r)}Bw(\tau )\,d\tau .
\end{equation*}%
In Theorem\ \ref{th-cd}, we analyzed the semigroup $e^{t(\overrightarrow{%
\Delta }+r)}.$ By proceeding in an analogous manner, we can derive
corresponding estimates for $e^{t({\nu }\overrightarrow{\Delta }+r)}$, which
is essentially the same semigroup (up to a time rescaling), and consequently
obtain
\begin{align*}
\Vert w(t)\Vert _{g,q,\lambda }& \leq \Vert e^{t({\nu }\overrightarrow{%
\Delta }+r)}u_{0}\Vert _{g,q,\lambda }+\int_{0}^{t}\Vert e^{(t-\tau )({\nu }%
\overrightarrow{\Delta }+r)}Bw(\tau )\Vert _{g,q,\lambda }\,d\tau \\
& \leq Ce^{-\sigma _{\nu }t}\Vert u_{0}\Vert _{g,q,\lambda
}+C\int_{0}^{t}e^{-\sigma _{\nu }(t-\tau )}\Vert w(\tau )\Vert _{g,q,\lambda
}\,d\tau ,
\end{align*}%
where $\sigma _{\nu }=c_{0}+\nu \left( \alpha _{p,q}+k\frac{\lambda }{mq}%
\gamma _{g}\right) $ with $\alpha _{p,q}=\min \left\{ \frac{4\delta _{m}(p-1)%
}{pq},\frac{\lambda _{1}}{q}\right\} $, $\gamma _{g}=k\left( 1+\frac{\lambda
}{m}\right) \left( \frac{1}{4}-c\right) ^{-1}$, $k=\sqrt{K}(m-1)$, $0<c<%
\frac{1}{4}$, and $C=C(m,c_{0},K)$. This yields
\begin{equation*}
e^{\sigma _{\nu }t}\Vert w(t)\Vert _{g,q,\lambda }\leq C\Vert u_{0}\Vert
_{g,q,\lambda }+C\int_{0}^{t}e^{\sigma _{\nu }\tau }\Vert w(\tau )\Vert
_{g,q,\lambda }\,d\tau ,
\end{equation*}%
and, by Gr\"{o}nwall's inequality, it follows that
\begin{equation}
\Vert e^{t(\nu \overrightarrow{\Delta }+r-B)}u_{0}\Vert _{g,q,\lambda }\leq
Ce^{-\sigma _{\nu }t+Ct}\Vert u_{0}\Vert _{g,q,\lambda }.  \label{eq-v}
\end{equation}%
Next, for $p\leq q$ and $0<t\leq 2,$ we can estimate
\begin{align*}
\Vert w(t)\Vert _{g,q,\lambda }& \leq \Vert e^{t({\nu }\overrightarrow{%
\Delta }+r)}u_{0}\Vert _{g,p,\lambda }+\int_{0}^{t}\Vert e^{(t-\tau )({\nu }%
\overrightarrow{\Delta }+r)}Bw(\tau )\Vert _{g,q,\lambda }\,d\tau \\
& \leq C\left[ d(t)^{-\frac{m}{2}}t^{\frac{\lambda }{2}}\right] ^{\frac{1}{p}%
-\frac{1}{q}}e^{-\sigma _{\nu }t}\Vert u_{0}\Vert _{g,p,\lambda
}+C\int_{0}^{t/2}\left[ d(t-\tau )^{-\frac{m}{2}}(t-\tau )^{\frac{\lambda }{2%
}}\right] ^{\frac{1}{p}-\frac{1}{q}}e^{-\sigma _{\nu }(t-\tau )}\Vert w(\tau
)\Vert _{g,p,\lambda }\,d\tau \\
& \hspace{0.3cm}+C\int_{t/2}^{t}\Vert w(\tau )\Vert _{g,q,\lambda }\,d\tau \\
& \leq C\left[ d(t)^{-\frac{m}{2}}t^{\frac{\lambda }{2}}\right] ^{\frac{1}{p}%
-\frac{1}{q}}\Vert u_{0}\Vert _{g,p,\lambda }+C\left[ d(t)^{-\frac{m}{2}}t^{%
\frac{\lambda }{2}}\right] ^{\frac{1}{p}-\frac{1}{q}}\int_{0}^{t/2}\Vert
w(\tau )\Vert _{g,p,\lambda }\,d\tau +C\int_{t/2}^{t}\Vert w(\tau )\Vert
_{g,q,\lambda }\,d\tau .
\end{align*}%
By using \eqref{eq-v} in the second term of the last line above, we arrive
at
\begin{equation*}
\Vert w(t)\Vert _{g,q,\lambda }\leq C\left[ d(t)^{-\frac{m}{2}}t^{\frac{%
\lambda }{2}}\right] ^{\frac{1}{p}-\frac{1}{q}}\Vert u_{0}\Vert
_{g,p,\lambda }+C\int_{t/2}^{t}\Vert w(\tau )\Vert _{g,q,\lambda }\,d\tau .
\end{equation*}%
Setting $W(t)=\left[ d(t)^{-\frac{m}{2}}t^{\frac{\lambda }{2}}\right]
^{-\left( \frac{1}{p}-\frac{1}{q}\right) }\Vert w(t)\Vert _{g,q,\lambda }$ ,
it follows that
\begin{align*}
W(t)& \leq C\Vert u_{0}\Vert _{g,p,\lambda }+C\left[ d(t)^{-\frac{m}{2}}t^{%
\frac{\lambda }{2}}\right] ^{-\left( \frac{1}{p}-\frac{1}{q}\right)
}\int_{t/2}^{t}\left[ d(\tau )^{-\frac{m}{2}}\tau ^{\frac{\lambda }{2}}%
\right] ^{\left( \frac{1}{p}-\frac{1}{q}\right) }W(\tau )\,d\tau \\
& \leq C\Vert u_{0}\Vert _{g,p,\lambda }+C\int_{0}^{t}W(\tau )\,d\tau .
\end{align*}%
By Gr\"{o}nwall's inequality, we obtain for $0<t\leq 2$
\begin{equation}
\Vert e^{t(\nu \overrightarrow{\Delta }+r-B)}u_{0}\Vert _{g,q,\lambda }\leq C%
\left[ d(t)^{-\frac{m}{2}}t^{\frac{\lambda }{2}}\right] ^{\frac{1}{p}-\frac{1%
}{q}}\Vert u_{0}\Vert _{g,p,\lambda }.  \label{eq-v1}
\end{equation}%
Next, for $t>1,$ we can use the semigroup property in order to estimate
\begin{align}
\Vert e^{t({\nu }\overrightarrow{\Delta }+r)}u_{0}\Vert _{g,q,\lambda }&
=\Vert e^{({\nu }\overrightarrow{\Delta }+r)}\left( e^{(t-1)({\nu }%
\overrightarrow{\Delta }+r)}u_{0}\right) \Vert _{g,q,\lambda }  \notag \\
& \leq C\Vert e^{(t-1)({\nu }\overrightarrow{\Delta }+r)}u_{0}\Vert
_{g,p,\lambda }  \notag \\
& \leq C\left[ d(t)^{-\frac{m}{2}}t^{\frac{\lambda }{2}}\right] ^{\frac{1}{p}%
-\frac{1}{q}}e^{-\sigma _{\nu }t+Ct}\Vert u_{0}\Vert _{g,p,\lambda }.
\label{eq-v2}
\end{align}%
Hence, by \eqref{eq-v1} and \eqref{eq-v2}, we get
\begin{equation*}
\Vert e^{t(\nu \overrightarrow{\Delta }+r-B)}u_{0}\Vert _{g,q,\lambda }\leq C%
\left[ d(t)^{-\frac{m}{2}}t^{\frac{\lambda }{2}}\right] ^{\frac{1}{p}-\frac{1%
}{q}}e^{-\sigma _{\nu }t+Ct}\Vert u_{0}\Vert _{g,p,\lambda }.
\end{equation*}%
Finally, we point out that the proof for the case $u_{0}\in \mathcal{M}%
_{p,\lambda }$ (item $(ii)$) follows similarly.\fin

Next, we derive the smoothing (gradient) estimates associated with the
semigroup $e^{t(\nu \overrightarrow{\Delta }+r-B)}$.

\begin{lem}[Smoothing: $\mathcal{M}_{p,\protect\lambda }^{g}\rightarrow
\mathcal{M}_{q,\protect\lambda }^{g}$]
\label{l-v2}Let $(M,g)$ be a complete, simply connected, non-compact, $m$%
-dimensional Riemannian manifold without boundary. Let $1\leq p\leq q<\infty
$ and $\lambda \in \lbrack 0,m)$ and denote by $u_{0}$ the initial data for
the heat equation.

\begin{enumerate}
\item[$(i)$] Assume that $M$ has bounded geometry, $-K(m-1)\leq \func{Ric}%
\leq -c_{0}<0$, $\kappa \leq 0,$ and let $u_{0}\in \mathcal{M}_{p,\lambda
}^{g}(\Gamma (TM))$. Then, for all $t>0$, the following estimate holds:
\begin{equation*}
\Vert \nabla e^{t(\nu \overrightarrow{\Delta }+r-B)}u_{0}\Vert _{g,q,\lambda
}\leq C\left[ d(t)\right] ^{-\frac{1}{2}-\frac{m}{2}\left( \frac{1}{p}-\frac{%
1}{q}\right) }t^{\frac{\lambda }{2}\left( \frac{1}{p}-\frac{1}{q}\right)
}e^{-\sigma _{\nu }t+Ct}\Vert u_{0}\Vert _{g,p,\lambda },
\end{equation*}%
where $d(t)=\min \{1,t\}$, $\sigma _{\nu }=c_{0}+\nu \left( \alpha _{p,q}+k%
\frac{\lambda }{mq}\gamma _{g}\right) $, $\alpha _{p,q}=\min \left\{ \frac{%
4\delta _{m}(p-1)}{pq},\frac{\lambda _{1}}{q}\right\} ,$ $k=\sqrt{K}(m-1)$,
and $\gamma _{g}=k\left( 1+\frac{\lambda }{m}\right) \left( \frac{1}{4}%
-c\right) ^{-1}$ with $0<c<\frac{1}{4}$, and $C=C(m,c_{0},K)$.

\item[$(ii)$] Assume that $M$ has bounded geometry, $-K(m-1)\leq \func{Ric}%
\leq -c_{0}<0$, $\kappa \leq 0,$ and let $u_{0}\in \mathcal{M}_{p,\lambda
}(\Gamma (TM))$. Then, for all $t>0$, the following estimate holds:
\begin{equation*}
\Vert \nabla e^{t(\nu \overrightarrow{\Delta }+r-B)}u_{0}\Vert _{g,q,\lambda
}\leq C\left[ d(t)\right] ^{-\frac{1}{2}-\frac{m}{2}\left( \frac{1}{p}-\frac{%
1}{q}\right) }t^{\frac{\lambda }{2}\left( \frac{1}{p}-\frac{1}{q}\right)
}e^{-\sigma _{\nu }t+Ct}\Vert u_{0}\Vert _{p,\lambda },
\end{equation*}%
where $\sigma _{\nu }=\widetilde{c}_{0}+\nu \alpha _{p,q}$ with $0<%
\widetilde{c}_{0}<c_{0}$, and $C=C(m,c_{0},K)$.
\end{enumerate}
\end{lem}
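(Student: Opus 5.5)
The estimate for $\nabla e^{t(\nu \overrightarrow{\Delta }+r-B)}$ will follow the scheme of Theorem \ref{th-s-ric}, with Lemma \ref{l-v1} taking the place of Theorem \ref{th-cd}. The perturbation $B$ is handled through Duhamel's formula. Set $w(t)=e^{t(\nu \overrightarrow{\Delta }+r-B)}u_{0}$ and write
\begin{equation*}
\nabla w(t)=\nabla e^{t(\nu \overrightarrow{\Delta }+r)}u_{0}+\int_{0}^{t}\nabla e^{(t-\tau )(\nu \overrightarrow{\Delta }+r)}Bw(\tau )\,d\tau .
\end{equation*}
Three ingredients are used. First, Theorem \ref{th-s-ric}, transferred to $e^{t(\nu \overrightarrow{\Delta }+r)}$ by the time rescaling already used in Lemma \ref{l-v1}, controls the free term. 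Second, $B$ is bounded on $\mathcal{M}_{p,\lambda }^{g}$ and on $\mathcal{M}_{q,\lambda }^{g}$; this follows from Theorem \ref{th-ri} together with the bounds on $\func{Ric}$. Third, Lemma \ref{l-v1} controls $\Vert w(\tau )\Vert $.

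\textbf{Step 1 ($p=q$, $0<t\leq 1$).} By the smoothing bound and the boundedness of $B$,
\begin{equation*}
\Vert \nabla w(t)\Vert _{g,p,\lambda }\leq Ct^{-1/2}\Vert u_{0}\Vert _{g,p,\lambda }+C\int_{0}^{t}(t-\tau )^{-1/2}\Vert w(\tau )\Vert _{g,p,\lambda }\,d\tau .
\end{equation*}
Estimate (\ref{eq-v}) gives $\Vert w(\tau )\Vert _{g,p,\lambda }\leq C\Vert u_{0}\Vert _{g,p,\lambda }$ on $[0,1]$. The integral is therefore at most $C\sqrt{t}\,\Vert u_{0}\Vert \leq Ct^{-1/2}\Vert u_{0}\Vert $, which proves the claim for $t\leq 1$.

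\textbf{Step 2 ($p\leq q$, $0<t\leq 2$).} In the Duhamel integral, split the $\tau $-range into $[0,t/2]$ and $[t/2,t]$, exactly as in the proof of Lemma \ref{l-v1}.
\begin{itemize}
\item On $[0,t/2]$, use the $\mathcal{M}_{p,\lambda }^{g}\rightarrow \mathcal{M}_{q,\lambda }^{g}$ gradient bound of Theorem \ref{th-s-ric}. Since $t-\tau \sim t$, it contributes $[d(t)]^{-\frac{1}{2}-\frac{m}{2}(\frac{1}{p}-\frac{1}{q})}t^{\frac{\lambda }{2}(\frac{1}{p}-\frac{1}{q})}$ times $\int_{0}^{t/2}\Vert w\Vert _{g,p,\lambda }$, and the latter integral is $\leq Ct\Vert u_{0}\Vert $.
\item On $[t/2,t]$, use the $q\rightarrow q$ gradient bound together with (\ref{eq-v1}) for $\Vert w(\tau )\Vert _{g,q,\lambda }$. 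This gives $\int_{t/2}^{t}(t-\tau )^{-1/2}\,d\tau \cdot [d(t)^{-\frac{m}{2}}t^{\frac{\lambda }{2}}]^{\frac{1}{p}-\frac{1}{q}}\Vert u_{0}\Vert $, which is dominated by the target weight because $\sqrt{t}\leq Ct^{-1/2}$ for $t\leq 2$.
\end{itemize}
Since the right-hand side no longer contains $\nabla w$, no Gronwall argument is needed here.

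\textbf{Step 3 ($t>1$).} Use the semigroup property $\nabla w(t)=\nabla e^{(\nu \overrightarrow{\Delta }+r-B)}\big(w(t-1)\big)$. Step 1 at time $1$ bounds $\nabla e^{(\nu \overrightarrow{\Delta }+r-B)}$ as an operator on $\mathcal{M}_{q,\lambda }^{g}$. Lemma \ref{l-v1} gives $\Vert w(t-1)\Vert _{g,q,\lambda }\leq C[\cdots ]^{\frac{1}{p}-\frac{1}{q}}e^{-\sigma _{\nu }t+Ct}\Vert u_{0}\Vert _{g,p,\lambda }$. Absorbing $e^{\sigma _{\nu }}$ and the ratio of time weights between $t-1$ and $t$ into $C$ yields the stated bound, with the factor $e^{-\sigma _{\nu }t+Ct}$ inherited directly from Lemma \ref{l-v1}.

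Item $(ii)$ is identical, replacing the $\mathcal{M}_{p,\lambda }$ versions of the semigroup estimates and using $\sigma _{\nu }=\widetilde{c}_{0}+\nu \alpha _{p,q}$. There the factor $(1+\gamma t)^{\lambda /q}$ is absorbed into $e^{(c_{0}-\widetilde{c}_{0})t}$.

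\textbf{Main obstacle.} The delicate point is Step 2: the $t/2$ splitting must produce exactly the time weight claimed, without losing integrability near $\tau =t$. The singularity there is only $(t-\tau )^{-1/2}$, so it is integrable. A second point to verify is that the rescaled Theorem \ref{th-s-ric} bound for $\nu \overrightarrow{\Delta }+r$ carries the decay rate $\sigma _{\nu }$ used in Lemma \ref{l-v1}. I would check this first, since any mismatch can in any case be absorbed into the $Ct$ in the exponent.
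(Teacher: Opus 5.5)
Your proof is correct, but for $p<q$ it takes a different route from the paper.

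\textbf{The paper's route.} After the same short-time $p=q$ step as your Step 1, the paper first extends this to a $q\to q$ gradient bound valid for all $t>0$,
$\Vert \nabla w(t)\Vert _{g,q,\lambda }\leq C[d(t)]^{-\frac{1}{2}}e^{-\sigma _{\nu }t+Ct}\Vert u_{0}\Vert _{g,q,\lambda }$.
It does this by writing $\nabla w(t)=\nabla e^{(\nu \overrightarrow{\Delta }+r-B)}\big(w(t-1)\big)$ and using \eqref{eq-v}. It then gets the $p\to q$ estimate from a single half-time factorization $\nabla w(t)=\nabla e^{\frac{t}{2}(\nu \overrightarrow{\Delta }+r-B)}\big(w(t/2)\big)$, combined with Lemma \ref{l-v1}.

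\textbf{Your route.} You handle $0<t\leq 2$ by redoing, at the gradient level, the Duhamel splitting at $t/2$ from the proof of Lemma \ref{l-v1}. This uses the $p\to q$ smoothing bound of Theorem \ref{th-s-ric} for the unperturbed semigroup together with \eqref{eq-v1}. You use a factorization only for large $t$, and only at unit time.

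\textbf{Comparison.} The paper's route is shorter. It also gets the correct small-time weight automatically, since $d(t/2)\sim d(t)$ and $(t/2)^{\lambda /2}\sim t^{\lambda /2}$. Yours costs a second Duhamel computation but is equally valid. Both rest on the same boundedness of $B$ on $\mathcal{M}_{p,\lambda }^{g}$ and $\mathcal{M}_{q,\lambda }^{g}$.

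\textbf{One slip in Step 3.} You cannot absorb the ratio of time weights between $t-1$ and $t$ when $t\in (1,2)$. There $d(t-1)=t-1$, so
$[d(t-1)^{-\frac{m}{2}}(t-1)^{\frac{\lambda }{2}}]^{\frac{1}{p}-\frac{1}{q}}=(t-1)^{-\frac{m-\lambda }{2}(\frac{1}{p}-\frac{1}{q})}$,
which blows up as $t\to 1^{+}$. The fix is simply to apply Step 3 only for $t\geq 2$. There $d(t-1)=d(t)=1$, the weights are comparable, and Step 2 already covers $(0,2]$.
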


\noindent {\textbf{Proof.}} Invoking Duhamel's formula once more, we get
\begin{equation*}
\nabla w(t)=\nabla e^{t({\nu }\overrightarrow{\Delta }+r)}u_{0}+\int_{0}^{t}%
\nabla e^{(t-\tau )({\nu }\overrightarrow{\Delta }+r)}Bw(\tau )\,d\tau .
\end{equation*}%
Then, for $0<t<2,$ we can estimate
\begin{align*}
\Vert \nabla w(t)\Vert _{g,q,\lambda }& \leq \Vert \nabla e^{t({\nu }%
\overrightarrow{\Delta }+r)}u_{0}\Vert _{g,q,\lambda }+\int_{0}^{t}\Vert
\nabla e^{(t-\tau )({\nu }\overrightarrow{\Delta }+r)}Bw(\tau )\Vert
_{g,q,\lambda }\,d\tau \\
& \leq C\frac{1}{\sqrt{t}}\Vert u_{0}\Vert _{g,q,\lambda }+\int_{0}^{t}\frac{%
1}{\sqrt{t-\tau }}\Vert w(\tau )\Vert _{g,q,\lambda }\,d\tau .
\end{align*}%
Combining this with (\ref{eq-v}), we deduce that
\begin{equation}
\Vert \nabla e^{t({\nu }\overrightarrow{\Delta }+r)}u_{0}\Vert _{g,q,\lambda
}\leq C\frac{1}{\sqrt{t}}\Vert u_{0}\Vert _{g,q,\lambda }.  \label{eq-sv1}
\end{equation}%
For $t>1,$ we use the semigroup property to obtain%
\begin{align}
\Vert \nabla e^{t({\nu }\overrightarrow{\Delta }+r)}u_{0}\Vert _{g,q,\lambda
}& =\Vert \nabla e^{({\nu }\overrightarrow{\Delta }+r)}\left( e^{(t-1)({\nu }%
\overrightarrow{\Delta }+r)}u_{0}\right) \Vert _{g,q,\lambda }  \notag \\
& \leq C\Vert e^{(t-1)({\nu }\overrightarrow{\Delta }+r)}u_{0}\Vert
_{g,q,\lambda }  \notag \\
& \leq Ce^{-\sigma _{\nu }t+Ct}\Vert u_{0}\Vert _{g,q,\lambda }.
\label{eq-sv2}
\end{align}%
Therefore, it follows from \eqref{eq-sv1} and \eqref{eq-sv2} that
\begin{equation}
\Vert \nabla e^{t({\nu }\overrightarrow{\Delta }+r)}u_{0}\Vert _{g,q,\lambda
}\leq C\left[ d(t)\right] ^{-\frac{1}{2}}e^{-\sigma _{\nu }t+Ct}\Vert
u_{0}\Vert _{g,q,\lambda }\text{, for all }t>0\text{.}  \label{eq-sv3}
\end{equation}%
Next, using the semigroup property together with \eqref{eq-sv3} and Lemma %
\ref{l-v1}, we are led to conclude
\begin{align*}
\Vert \nabla e^{t({\nu }\overrightarrow{\Delta }+r)}u_{0}\Vert _{g,q,\lambda
}& =\Vert \nabla e^{\frac{t}{2}({\nu }\overrightarrow{\Delta }+r)}\left( e^{%
\frac{t}{2}({\nu }\overrightarrow{\Delta }+r)}u_{0}\right) \Vert
_{g,q,\lambda } \\
& \leq C\left[ d(t)\right] ^{-\frac{1}{2}}e^{-\sigma _{\nu }\frac{t}{2}+C%
\frac{t}{2}}\Vert e^{\frac{t}{2}({\nu }\overrightarrow{\Delta }%
+r)}u_{0}\Vert _{g,q,\lambda } \\
& \leq C\left[ d(t)\right] ^{-\frac{1}{2}-\frac{m}{2}\left( \frac{1}{p}-%
\frac{1}{q}\right) }t^{\frac{\lambda }{2}\left( \frac{1}{p}-\frac{1}{q}%
\right) }e^{-\sigma _{\nu }t+Ct}\Vert u_{0}\Vert _{g,p,\lambda },
\end{align*}%
as desired.\fin

With the previous semigroups estimates in hands, we are in position to
establish a well-posedness result for system (\ref{nsv}). First, in light of
Duhamel's principle, system (\ref{nsv}) can be formally rewritten in its
mild formulation%
\begin{equation}
u(t)=e^{t(\nu \overrightarrow{\Delta }+r-B)}u_{0}-\int_{0}^{t}e^{(t-\tau
)(\nu \overrightarrow{\Delta }+r-B)}\mathbb{P}(\func{div}(u\otimes u))(\tau
)\,d\tau .  \label{mildv}
\end{equation}

Next let $p,q,s\in \lbrack 1,\infty )$ and $\lambda \in \lbrack 0,m)$
satisfy $m-\lambda \leq p<q,s$, and recall that $\mathcal{C}_{b}(0,\infty
),Z)$ stands for the space of bounded continuous functions from $(0,\infty )$
to $Z$. \ We investigate solutions of (\ref{mildv}) (mild solutions) in the
functional space
\begin{equation}
CX_{\infty }^{\rho }=CM_{\infty }\cap X_{\infty }^{\rho },
\label{space-th-visc-1}
\end{equation}%
where
\begin{equation*}
CM_{\infty }=\mathcal{C}_{b}((0,{\infty }),\mathcal{M}_{p,\lambda
}^{g}(\Gamma (TM)))
\end{equation*}%
and
\begin{align*}
X_{\infty }^{\rho }=\bigg\{& u\in L_{loc}^{\infty }((0,{\infty }),\mathcal{M}%
_{q,\lambda }^{g}(\Gamma (TM))), \\
& \nabla u\in L_{loc}^{\infty }((0,{\infty }),\mathcal{M}_{\widetilde{q}%
,\lambda }^{g}(\Gamma (TM)))\cap L_{loc}^{\infty }((0,{\infty }),\mathcal{M}%
_{s,\lambda }^{g}(\Gamma (TM))) \\
& \,\bigg|\,[d(t)^{\frac{m}{2}}t^{-\frac{\lambda }{2}}]^{\left( \frac{1}{p}-%
\frac{1}{q}\right) }e^{\rho t}\Vert u(t)\Vert _{g,q,\lambda }+\left[ d(t)%
\right] ^{\frac{1}{2}+\frac{m}{2}\left( \frac{1}{p}-\frac{1}{s}\right) }t^{-%
\frac{\lambda }{2}\left( \frac{1}{p}-\frac{1}{s}\right) }e^{\rho t}\Vert
\nabla u(t)\Vert _{g,s,\lambda }\in L^{\infty }(0,{\infty })\bigg\},
\end{align*}%
where the parameter $\rho >0$ is chosen according to the large-time decay
rate in the smoothing estimate of Lemma \ref{l-v2}. The space $CX_{\infty
}^{\rho }$ is Banach when equipped with the norm $\Vert \cdot \Vert
_{CX_{\infty }^{\rho }}=\Vert \cdot \Vert _{CM_{\infty }}+\Vert \cdot \Vert
_{X_{\infty }^{\rho }}$ where $\Vert u\Vert _{CM_{\infty }}=\sup_{0<t<\infty
}\Vert u(t)\Vert _{g,p,\lambda }$ and
\begin{equation}
\Vert u\Vert _{X_{\infty }^{\rho }}=\sup_{0<t<\infty }\bigg\{[d(t)^{\frac{m}{%
2}}t^{-\frac{\lambda }{2}}]^{\left( \frac{1}{p}-\frac{1}{q}\right) }e^{\rho
t}\Vert u(t)\Vert _{g,q,\lambda }+\left[ d(t)\right] ^{\frac{1}{2}+\frac{m}{2%
}\left( \frac{1}{p}-\frac{1}{s}\right) }t^{-\frac{\lambda }{2}\left( \frac{1%
}{p}-\frac{1}{s}\right) }e^{\rho t}\Vert \nabla u(t)\Vert _{g,s,\lambda }%
\bigg\}.  \label{norm2}
\end{equation}

Our global well-posedness result for (\ref{nsv}) reads as follows.

\begin{thr}[Global well-posedness]
\label{th-visc} Let $(M,g)$ be a manifold as in Definition \ref{df-m}, and
let $p,q,s\in \lbrack 1,\infty )$ and $\lambda \in \lbrack 0,m)$ satisfy $%
m-\lambda \leq p<q,s.$ Assume that $u_{0}\in \mathcal{M}_{p,\lambda
}^{g}(\Gamma (TM))$ with $\func{div}(u_{0})=0$. Then, there exists $%
\varepsilon >0$ such that the modified Navier-Stokes system (\ref{nsv})
admit a unique mild solution $u\in CX_{\infty }^{\rho }$ satisfying $\Vert
u\Vert _{CX_{\infty }^{\rho }}\leq 2\widetilde{C}\varepsilon $, for some
constant $\widetilde{C}>0$, provided that $\Vert u_{0}\Vert _{g,p,\lambda
}\leq \varepsilon $. Moreover, the solution $u\rightharpoonup u_{0}$ in the
sense of distributions on $M$ as $t\rightarrow 0^{+}$.
\end{thr}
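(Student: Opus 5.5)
We will apply Lemma \ref{fix} in $X=CX_{\infty}^{\rho}$ with $\mathcal{T}\equiv 0$ and $C_{1}=0$. The linear part is $u_{1}(t)=e^{t(\nu\overrightarrow{\Delta}+r-B)}u_{0}$ and the bilinear part is
\begin{equation*}
\mathcal{N}(u,v)(t)=-\int_{0}^{t}e^{(t-\tau)(\nu\overrightarrow{\Delta}+r-B)}\mathbb{P}(\func{div}(u\otimes v))(\tau)\,d\tau .
\end{equation*}
The rate $\rho$ is fixed as $\rho=\sigma_{\nu}-C>0$, where $\sigma_{\nu}$ and $C$ come from Lemmas \ref{l-v1} and \ref{l-v2}. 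Positivity of $\rho$ is exactly what condition $(iv)$ of Definition \ref{df-m} and control on $\nu$ should provide, so that the exponential factors $e^{-\sigma_{\nu}t+Ct}$ become genuine decay.

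\textbf{Step 1 (linear estimate).} Lemma \ref{l-v1} with $(p,p)$ and $(p,q)$, and Lemma \ref{l-v2} with $(p,s)$, give $\Vert u_{1}\Vert_{CX_{\infty}^{\rho}}\leq\widetilde{C}\Vert u_{0}\Vert_{g,p,\lambda}\leq\widetilde{C}\varepsilon$. The time weights in the norm (\ref{norm2}) are chosen precisely to cancel the factors in those lemmas. Continuity in time in $\mathcal{M}^{g}_{p,\lambda}$ for $t>0$ follows from the semigroup property.

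\textbf{Step 2 (bilinear estimate).} Since $\func{div}(u\otimes v)=\nabla_{u}v$ for divergence-free $u$, I will put the derivative on $v$ instead of using a smoothing estimate on $u\otimes v$. By Hölder (Lemma \ref{holder}), $\Vert u\,\nabla v\Vert_{g,r,\lambda}\leq\Vert u\Vert_{g,q,\lambda}\Vert\nabla v\Vert_{g,s,\lambda}$ with $1/r=1/q+1/s$. By Theorem \ref{th-ri}, $\mathbb{P}$ is bounded on $\mathcal{M}^{g}_{r,\lambda}$, and Theorem \ref{th-ri} needs $\lambda$ small, which is again $(iv)$.

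For the $\mathcal{M}^{g}_{q,\lambda}$ and $\mathcal{M}^{g}_{p,\lambda}$ components I then apply Lemma \ref{l-v1} from $\mathcal{M}^{g}_{r,\lambda}$. If $r<p$, I would first use the inclusion Lemma \ref{prop-inck}, or choose $q,s$ large so that $r\geq p$. For the $\nabla$-component in $\mathcal{M}^{g}_{s,\lambda}$ I apply Lemma \ref{l-v2}. Each term reduces to a scalar integral of the form
\begin{equation*}
\int_{0}^{t}[d(t-\tau)]^{-a}(t-\tau)^{b}e^{-\rho(t-\tau)}\,[d(\tau)]^{-c'}\tau^{e}e^{-2\rho\tau}\,d\tau .
\end{equation*}
These integrals must be bounded by $C e^{-\rho t}$ times the reciprocal weights. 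Near $0$ one needs integrability of Beta-type singularities, which is where $m-\lambda\leq p$ enters, giving exponents $a<1$ and $c'<1$. For large $t$ one uses $e^{-2\rho\tau}\leq e^{-\rho\tau}$ and splits at $\tau=t/2$. The result is $\Vert\mathcal{N}(u,v)\Vert_{X}\leq C_{2}\Vert u\Vert_{X}\Vert v\Vert_{X}$.

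\textbf{Step 3 (conclusion).} Taking $\varepsilon<1/(4C_{2}\widetilde{C})$, Lemma \ref{fix} gives a unique solution with $\Vert u\Vert\leq2\widetilde{C}\varepsilon$. Weak convergence to $u_{0}$ as $t\to0^{+}$ follows as in Theorem \ref{th-local}: the linear part converges weakly, and the Duhamel term tends to $0$ against test functions.

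\textbf{Main obstacle.} I expect the main difficulty to be the exponent bookkeeping in Step 2 when $p=m-\lambda$ is critical. The singular powers at $\tau\to0$ and $\tau\to t$ must sum to something integrable while producing exactly the weights in (\ref{norm2}). This may force restrictions on $q$ and $s$, for example $\frac1q+\frac1s<\frac{1}{m-\lambda}$. If the direct $\nabla$-on-$v$ route fails, the fallback is the smoothing estimate on $u\otimes v\in\mathcal{M}^{g}_{q/2,\lambda}$, as in the proof of Theorem \ref{th-local}.
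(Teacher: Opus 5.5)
Your architecture matches the paper's proof: Lemma \ref{fix} with $\mathcal{T}=0$, the linear part from Lemmas \ref{l-v1} and \ref{l-v2}, and the bound $\Vert \func{div}(u\otimes v)\Vert_{g,r,\lambda}\le C\Vert u\Vert_{g,q,\lambda}\Vert \nabla v\Vert_{g,s,\lambda}$ with $\frac1r=\frac1q+\frac1s$, combined with Theorem \ref{th-ri}. The gap is in the exponent bookkeeping. You rightly call it the crux, but you resolve it in the wrong direction.

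\textbf{Integrability at $\tau=0$.} In every Duhamel integral the singularity at $\tau=0$ is $\tau^{-\frac12-\frac{m-\lambda}{2}(\frac2p-\frac1q-\frac1s)}$. It is integrable if and only if $(m-\lambda)\bigl(\frac2p-\frac1q-\frac1s\bigr)<1$. The hypothesis $p\ge m-\lambda$ does not give this exponent bound. It only makes the resulting power $t^{\frac12-\frac{m-\lambda}{2p}}$ bounded on $(0,1)$. At the critical exponent $p=m-\lambda$ the condition reads $\frac1q+\frac1s>\frac1p$. This is the reverse of the restriction $\frac1q+\frac1s<\frac1{m-\lambda}$ you suggest, under which all three integrals diverge. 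The paper's proof uses the same product bound for the $X^{\rho}_{\infty}$ components, so it tacitly needs this lower bound on $\frac1q+\frac1s$ too, even though the statement allows any $q,s>p$.

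\textbf{The $CM_\infty$ component.} Lemma \ref{l-v1} maps $\mathcal{M}^g_{r,\lambda}$ into $\mathcal{M}^g_{p,\lambda}$ only when $r\le p$. If $r<p$ you simply apply it directly. For $r>p$ nothing rescues it:
\begin{enumerate}
\item[$(a)$] Lemma \ref{prop-inck} keeps $\frac{m-\lambda}{p}$ fixed, so it sends $\mathcal{M}^g_{r,\lambda}$ into $\mathcal{M}^g_{p,\lambda'}$ with $\lambda'=m-\frac pr(m-\lambda)>\lambda$, not into $\mathcal{M}^g_{p,\lambda}$.
\item[$(b)$] Testing $\chi_{B_y(R)}$, whose norms are exactly $|B_y(R)|^{\frac{m-\lambda}{mr}}$ and $|B_y(R)|^{\frac{m-\lambda}{mp}}$, shows $\mathcal{M}^g_{r,\lambda}\not\subset\mathcal{M}^g_{p,\lambda}$ once ball volumes are unbounded.
\end{enumerate}
So ``choose $q,s$ large so that $r\ge p$'' destroys this estimate instead of enabling it. The paper avoids the constraint altogether. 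It puts $u\otimes u$ in $\mathcal{M}^g_{\tilde r,\lambda}$ with $\frac1{\tilde r}=\frac1p+\frac1q$, so $\tilde r<p$ always. It then lets the derivative fall on the semigroup and bounds the term by $C\Vert u\Vert_{X^\rho_\infty}\Vert u\Vert_{CM_\infty}$.

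\textbf{Large time with $\rho=\sigma_\nu-C$.} Your large-time step fails with this exact choice once $\lambda>0$. For $t>1$ the weights in \eqref{norm2} remove only $t^{\frac\lambda2(\frac1p-\frac1q)}$ (resp.\ $t^{\frac\lambda2(\frac1p-\frac1s)}$). Meanwhile Lemma \ref{l-v1} (resp.\ \ref{l-v2}) contributes $(t-\tau)^{\frac{\lambda}{2s}}$ (resp.\ $(t-\tau)^{\frac{\lambda}{2q}}$), which is comparable to $t^{\frac{\lambda}{2s}}$ (resp.\ $t^{\frac{\lambda}{2q}}$) for $\tau\in(0,1)$. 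Since $e^{\rho t}e^{-\rho(t-\tau)}e^{-2\rho\tau}=e^{-\rho\tau}$ leaves no decay in $t$, the net factor is $t^{\frac\lambda2(\frac1q+\frac1s-\frac1p)}$. This is unbounded whenever $\frac1q+\frac1s>\frac1p$, which your $CM_\infty$ route forces, as does integrability at $p=m-\lambda$. Splitting at $t/2$ does not touch the piece $\tau\in(0,1)$. Choose instead $0<\rho<\sigma_\nu-C$, so that a leftover factor $e^{-(\sigma_\nu-C-\rho)(t-\tau)}$ absorbs these powers.
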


\noindent {\textbf{Proof.}} The integral equation (\ref{mildv}) can be
written as
\begin{equation}
u(t)=u_{1}+\mathcal{N}_{\nu }(u,u)(t),  \label{aux-proof-mild-2000}
\end{equation}%
where $u_{1}=e^{t(\nu \overrightarrow{\Delta }+r-B)}u_{0}$ and
\begin{equation*}
\mathcal{N}_{\nu }(u,u)(t)=-\int_{0}^{t}e^{(t-\tau )(\nu \overrightarrow{%
\Delta }+r-B)}\mathbb{P}(\func{div}(u\otimes u))(\tau )\,d\tau .
\end{equation*}

By combining estimate \eqref{eq-v} with the boundedness of $\mathbb{P}$ (see
Theorem \ref{th-ri}), and using the assumptions $m-\lambda \leq p<q,s<\infty
$ together with $\frac{1}{r}=\frac{1}{q}+\frac{1}{s}$, we obtain
\begin{align*}
\Vert \mathcal{N}_{\nu }(u,u)& (t)\Vert _{g,q,\lambda }\leq
C\int_{0}^{t}[d(t-\tau )]^{-\frac{m}{2}\left( \frac{1}{r}-\frac{1}{q}\right)
}(t-\tau )^{\frac{\lambda }{2}\left( \frac{1}{r}-\frac{1}{q}\right)
}e^{-\rho (t-\tau )}\Vert \func{div}(u\otimes u)(\tau )\Vert _{g,r,\lambda
}\,d\tau \\
& \leq C\int_{0}^{t}[d(t-\tau )]^{-\frac{m}{2}\left( \frac{1}{r}-\frac{1}{q}%
\right) }(t-\tau )^{\frac{\lambda }{2}\left( \frac{1}{r}-\frac{1}{q}\right)
}e^{-\rho (t-\tau )}\Vert u(\tau )\Vert _{g,q,\lambda }\Vert \nabla u(\tau
)\Vert _{g,s,\lambda }\,d\tau \\
& \leq C\Vert u\Vert _{X_{\infty }^{\rho }}^{2}\int_{0}^{t}[d(t-\tau )]^{-%
\frac{m}{2}\left( \frac{1}{r}-\frac{1}{q}\right) }(t-\tau )^{\frac{\lambda }{%
2}\left( \frac{1}{r}-\frac{1}{q}\right) }e^{-\rho (t-\tau )}e^{-2\rho \tau
}[d(\tau )]^{-\frac{1}{2}-\frac{m}{2}\left( \frac{2}{p}-\frac{1}{q}-\frac{1}{%
s}\right) }\tau ^{\frac{\lambda }{2}\left( \frac{2}{p}-\frac{1}{q}-\frac{1}{s%
}\right) }\,d\tau ,
\end{align*}%
which yields
\begin{equation}
\lbrack d(t)^{\frac{m}{2}}t^{-\frac{\lambda }{2}}]^{\left( \frac{1}{p}-\frac{%
1}{q}\right) }e^{\rho t}\Vert \mathcal{N}(u,u)(t)\Vert _{g,q,\lambda }\leq
C\Vert u\Vert _{X_{\infty }^{\rho }}^{2}.  \label{aux-proof-2001}
\end{equation}%
For second term of the norm $\Vert \cdot \Vert _{X_{\infty }^{\rho }}$, we
have
\begin{align*}
\Vert \nabla \mathcal{N}_{\nu }& (u,u)(t)\Vert _{g,s,\lambda }\leq
C\int_{0}^{t}[d(t-\tau )]^{-\frac{1}{2}-\frac{m}{2}\left( \frac{1}{r}-\frac{1%
}{s}\right) }(t-\tau )^{\frac{\lambda }{2}\left( \frac{1}{r}-\frac{1}{s}%
\right) }e^{-\rho (t-\tau )}\Vert \func{div}(u\otimes u)(\tau )\Vert
_{g,r,\lambda }\,d\tau \\
& \leq C\int_{0}^{t}[d(t-\tau )]^{-\frac{1}{2}-\frac{m}{2}\left( \frac{1}{r}-%
\frac{1}{s}\right) }(t-\tau )^{\frac{\lambda }{2}\left( \frac{1}{r}-\frac{1}{%
q}\right) }e^{-\rho (t-\tau )}\Vert u(\tau )\Vert _{g,q,\lambda }\Vert
\nabla u(\tau )\Vert _{g,s,\lambda }\,d\tau \\
& \leq C\Vert u\Vert _{X_{\infty }^{\rho }}^{2}\int_{0}^{t}[d(t-\tau )]^{{-%
\frac{1}{2}-\frac{m}{2}\left( \frac{1}{r}-\frac{1}{s}\right) }}(t-\tau )^{%
\frac{\lambda }{2}\left( \frac{1}{r}-\frac{1}{s}\right) }e^{-\rho (t-\tau
)}e^{-2\rho \tau }[d(\tau )]^{\left( -\frac{1}{2}+\frac{2}{p}-\frac{1}{q}-%
\frac{1}{s}\right) }\tau ^{\frac{\lambda }{2}\left( \frac{2}{p}-\frac{1}{q}-%
\frac{1}{s}\right) }\,d\tau ,
\end{align*}%
and then
\begin{equation}
\left[ d(t)\right] ^{{\frac{1}{2}+\frac{m}{2}\left( \frac{1}{r}-\frac{1}{s}%
\right) }}t^{-\frac{\lambda }{2}\left( \frac{1}{r}-\frac{1}{s}\right)
}e^{\rho t}\Vert \nabla \mathcal{N}_{\nu }(u,u)(t)\Vert _{g,s,\lambda }\leq
C\Vert u\Vert _{X_{\infty }^{\rho }}^{2}.  \label{aux-proof-2002}
\end{equation}

Moreover, taking $\frac{1}{\widetilde{r}}=\frac{1}{p}+\frac{1}{{q}}$ and
using H\"{o}lder's inequality (\ref{holder}), we can estimate the norm of $%
CM_{\infty }$ as
\begin{align*}
\Vert \mathcal{N}_{\nu }& (u,u)(t)\Vert _{g,p,\lambda }\leq
C\int_{0}^{t}[d(t-\tau )]^{-\frac{1}{2}-\frac{m}{2}\left( \frac{1}{%
\widetilde{r}}-\frac{1}{p}\right) }(t-\tau )^{\frac{\lambda }{2}\left( \frac{%
1}{\widetilde{r}}-\frac{1}{p}\right) }e^{-\rho (t-\tau )}\Vert u\otimes
u(\tau )\Vert _{g,\widetilde{r},\lambda }\,d\tau \\
& \leq C\Vert u\Vert _{X_{\infty }^{\rho }}\Vert u\Vert _{CM_{\infty
}}e^{-\rho t}\,\int_{0}^{t}[d(t-\tau )]^{\left( \frac{1}{q}+\frac{1}{m}%
\right) }(t-\tau )^{\frac{\lambda }{2q}}[d(\tau )^{-\frac{m}{2}}\tau ^{\frac{%
\lambda }{2}}]^{\left( \frac{1}{p}-\frac{1}{q}\right) }d\tau .
\end{align*}%
This last estimate leads us to%
\begin{equation}
\Vert \mathcal{N}_{\nu }(u,u)\Vert _{CM_{\infty }^{\rho }}\leq C\Vert u\Vert
_{X_{\infty }^{\rho }}\Vert u\Vert _{CM_{\infty }}.  \label{aux-proof-2003}
\end{equation}%
Bringing together the estimates (\ref{aux-proof-2001}), (\ref{aux-proof-2002}%
) and (\ref{aux-proof-2003}), we arrive at
\begin{align*}
\Vert \mathcal{N}_{\nu }(u,u)(t)\Vert _{CX_{\infty }^{\rho }}& =\Vert
\mathcal{N}_{\nu }(u,u)(t)\Vert _{CM_{\infty }}+\Vert \mathcal{N}_{\nu
}(u,u)(t)\Vert _{X_{\infty }^{\rho }} \\
& \leq C\left( \Vert u\Vert _{X_{\infty }^{\rho }}\Vert u\Vert _{CM_{\infty
}}+\Vert u\Vert _{X_{\infty }^{\rho }}^{2}\right) \\
& \leq C\Vert u\Vert _{CM_{\infty }^{\rho }}^{2}.
\end{align*}%
Also, using Lemma \ref{l-v1}, we can estimate the linear part in (\ref%
{aux-proof-mild-2000}) as%
\begin{eqnarray*}
\Vert u_{1}\Vert _{CX_{\infty }^{\rho }} &=&\Vert e^{t(\nu \overrightarrow{%
\Delta }+r-B)}u_{0}\Vert _{CM_{\infty }}+\Vert e^{t(\nu \overrightarrow{%
\Delta }+r-B)}u_{0}\Vert _{X_{\infty }^{\rho }} \\
&\leq &\widetilde{C}\left\Vert u_{0}\right\Vert _{g,p,\lambda }\leq
\widetilde{C}\varepsilon ,
\end{eqnarray*}%
provided that $\left\Vert u_{0}\right\Vert _{g,p,\lambda }\leq \varepsilon .$
The result then follows by choosing $\varepsilon >0$ sufficiently small and
applying Lemma \ref{fix}. \fin

\subsection{Einstein manifolds with negative curvature}

\label{s6.1} If we add the additional hypothesis that $M$ is an Einstein
manifold with $\func{Ric}=\lambda <0$, then
\begin{equation*}
\func{div}(u)=0\Rightarrow \func{div}(r(u))=0,
\end{equation*}
which in turn implies that $Bu=0$ in the Navier-Stokes system. Thus, this
system on $M$ can be written as
\begin{equation}
\begin{cases}
\partial _{t}u-\left( \overrightarrow{\Delta }u+r(u)\right) =-\mathbb{P}(%
\func{div}(u\otimes u)), \\
\func{div}(u)=0, \\
u(0,\cdot )=u_{0}.%
\end{cases}
\label{ns-e}
\end{equation}%
In its mild formulation, this system takes the following form
\begin{equation}
u(t)=e^{t(\overrightarrow{\Delta }+r)}u_{0}-\int_{0}^{t}e^{(t-\tau )(%
\overrightarrow{\Delta }+r)}\mathbb{P}(\func{div}(u\otimes u))(\tau )\,d\tau
\label{mild-e}
\end{equation}

An important example of an Einstein manifold is the hyperbolic space $%
\mathbb{H}^{m}$, and more generally, any space of constant sectional
curvature. Note also that the hyperbolic space satisfies Definition \ref%
{df-m}.

In order to analyze the global well-posedness of (\ref{ns-e}), let $p,q\in
\lbrack 1,\infty )$ and $\lambda \in \lbrack 0,m)$ satisfying $m-\lambda
\leq p<q$ and consider the functional setting $CX_{\infty }=CM_{\infty }\cap
X_{\infty }$, where $CM_{\infty }=\mathcal{C}_{b}((0,{\infty }),\mathcal{M}%
_{p,\lambda }^{g}(\Gamma (TM)))$ and
\begin{equation}
X_{\infty }=\bigg\{u\in L_{loc}^{\infty }((0,{\infty }),\mathcal{M}%
_{q,\lambda }^{g}(\Gamma (TM)))\,\bigg|\,[d(t)^{\frac{m}{2}}t^{-\frac{%
\lambda }{2}}]^{\left( \frac{1}{p}-\frac{1}{q}\right) }e^{\beta t}\Vert
u(t)\Vert _{g,q,\lambda }\in L^{\infty }(0,{\infty })\bigg\}  \label{space3}
\end{equation}%
where again $\beta $ is chosen according to the decay rate in the smoothing
estimate of Remark \ref{r-sm}.

The space $CX_{\infty }$ is endowed with the norm $\Vert \cdot \Vert
_{CX_{\infty }}=\Vert \cdot \Vert _{CM_{\infty }}+\Vert \cdot \Vert
_{X_{\infty }}$ where and
\begin{equation}
\Vert u\Vert _{CM_{\infty }}=\sup_{0<t<\infty }\Vert u(t)\Vert _{g,p,\lambda
}\text{ and }\Vert u\Vert _{X_{\infty }}=\sup_{0<t<{\infty }}\left\{ [d(t)^{%
\frac{m}{2}}t^{-\frac{\lambda }{2}}]^{\left( \frac{1}{p}-\frac{1}{q}\right)
}e^{\beta t}\Vert u(t)\Vert _{g,q,\lambda }\right\}  \label{space-norm3}
\end{equation}

We obtain the following global well-posedness result for system (\ref{ns-e}).

\begin{thr}[Einstein Manifolds-Global Well-posedness]
\label{th-ein} Let $(M,g)$ be a manifold as in Definition \ref{df-m}, and
assume further that $M$ is an Einstein manifold. Let $p,q\in \lbrack
1,\infty )$ and $\lambda \in \lbrack 0,m)$ satisfy $m-\lambda \leq p<q$, and
suppose that $u_{0}\in \mathcal{M}_{p,\lambda }^{g}(\Gamma (TM))$ with $%
\func{div}(u_{0})=0$. Then, there exists $\varepsilon >0$ such that, if $%
\Vert u_{0}\Vert _{g,p,\lambda }\leq \varepsilon ,$ the Navier-Stokes system
(\ref{ns-e}) admits a unique mild solution $u\in CX_{\infty }$ satisfying $%
\Vert u\Vert _{CX_{\infty }}\leq 2\widetilde{C}\varepsilon $, for some
constant $\widetilde{C}>0$. Moreover, $u\rightharpoonup u_{0}$ in the sense
of distributions on $M$ as $t\rightarrow 0^{+}$.
\end{thr}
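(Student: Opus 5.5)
We will apply Lemma \ref{fix} with $X=CX_{\infty }$, $\mathcal{T}\equiv 0$ (so $C_{1}=0$), $u_{1}(t)=e^{t(\overrightarrow{\Delta }+r)}u_{0}$, and
\begin{equation*}
\mathcal{N}(u,v)(t)=-\int_{0}^{t}e^{(t-\tau )(\overrightarrow{\Delta }+r)}\mathbb{P}(\func{div}(u\otimes v))(\tau )\,d\tau .
\end{equation*}
The Einstein assumption gives $Bu=0$, so (\ref{mild-e}) is exactly $u=u_{1}+\mathcal{N}(u,u)$, and no linear perturbation has to be absorbed. This is why a global result is possible here, in contrast with Theorem \ref{th-local}. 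We fix $\beta >0$ as in Remark \ref{r-sm}(i). Condition $(iv)$ of Definition \ref{df-m} guarantees that the exponent $-c_{0}-\alpha _{p,q}+k\frac{\lambda }{mq}\gamma _{g}$ appearing in Theorems \ref{th-cd} and \ref{th-s-ric} is at most $-c_{0}<0$. Hence a uniform rate $\beta \leq c_{0}$ is available for both the dispersive and the smoothing estimates, and it is also compatible with the exponents $(p,q)$, $(q/2,q)$ and $(r,p)$ that we use.

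\textbf{Linear estimate.} Theorem \ref{th-cd} with the pair $(p,p)$ gives $\sup_{t}\Vert u_{1}(t)\Vert _{g,p,\lambda }\leq C\Vert u_{0}\Vert _{g,p,\lambda }$. The same theorem with the pair $(p,q)$ gives $[d(t)^{\frac{m}{2}}t^{-\frac{\lambda }{2}}]^{\frac{1}{p}-\frac{1}{q}}e^{\beta t}\Vert u_{1}(t)\Vert _{g,q,\lambda }\leq C\Vert u_{0}\Vert _{g,p,\lambda }$. Together these yield $\Vert u_{1}\Vert _{CX_{\infty }}\leq \widetilde{C}\Vert u_{0}\Vert _{g,p,\lambda }\leq \widetilde{C}\varepsilon $. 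Continuity in $t>0$ with values in $\mathcal{M}_{p,\lambda }^{g}$, and weak convergence to $u_{0}$ as $t\to 0^{+}$, follow from standard semigroup arguments, as at the end of the proof of Theorem \ref{th-local}.

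\textbf{Bilinear estimate.} We write $e^{(t-\tau )(\overrightarrow{\Delta }+r)}\mathbb{P}\func{div}$ as a smoothing operator applied to $u\otimes v$. This uses duality, or equivalently the commutation of $\mathbb{P}$ with $\overrightarrow{\Delta }+r$, which holds since $\partial M=\varnothing $ and $M$ is Einstein. We then combine the boundedness of $\mathbb{P}$ from Theorem \ref{th-ri} with Theorem \ref{th-s-ric} and Remark \ref{r-sm}(i).

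For the $X_{\infty }$-part we bound $\Vert u\otimes v\Vert _{g,q/2,\lambda }\leq \Vert u\Vert _{g,q,\lambda }\Vert v\Vert _{g,q,\lambda }$ by Lemma \ref{holder}. This gives
\begin{equation*}
\Vert \mathcal{N}(u,v)(t)\Vert _{g,q,\lambda }\leq C\Vert u\Vert _{X_{\infty }}\Vert v\Vert _{X_{\infty }}\int_{0}^{t}[d(t-\tau )]^{-\frac{1}{2}-\frac{m}{2q}}(t-\tau )^{\frac{\lambda }{2q}}e^{-\beta (t-\tau )}[d(\tau )^{-\frac{m}{2}}\tau ^{\frac{\lambda }{2}}]^{2(\frac{1}{p}-\frac{1}{q})}e^{-2\beta \tau }\,d\tau .
\end{equation*}
For the $CM_{\infty }$-part we take $\frac{1}{r}=\frac{1}{p}+\frac{1}{q}$ and bound by $\Vert u\Vert _{CM_{\infty }}\Vert v\Vert _{X_{\infty }}$ times an analogous integral, as in Theorem \ref{th-local}. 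It then remains to show that the weighted integrals are bounded uniformly in $t\in (0,\infty )$.

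\textbf{Main obstacle: uniform-in-time bound on the weighted integrals.} We split the time range into $t\leq 2$ and $t>2$.

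For $t\leq 2$, we have $d(\cdot )\simeq \cdot $, and the integrals reduce to Beta-type integrals. After multiplying by the weight, the resulting power of $t$ is $\frac{1}{2}(1-\frac{m-\lambda }{p})\geq 0$. This is bounded precisely because $p\geq m-\lambda $, and the integrability near $\tau =0$ and near $\tau =t$ is checked using $q>p\geq m-\lambda $.

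For $t>2$, we split the integral at $\tau =t-1$ and $\tau =1$. On $[1,t-1]$ the integrand is dominated by $C e^{-\beta t}e^{-\beta \tau }$ times polynomial factors in $\tau ,t-\tau $. The extra decay $e^{-2\beta \tau }$ versus the target $e^{-\beta t}$ absorbs the polynomial growth $(t-\tau )^{\lambda /2q}\tau ^{\lambda (\frac{1}{p}-\frac{1}{q})}$ and the weight factor $t^{-\frac{\lambda }{2}(\frac{1}{p}-\frac{1}{q})}$. Near the endpoints, the singularities are integrable for the same reasons as in the case $t\leq 2$.

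This gives $\Vert \mathcal{N}(u,v)\Vert _{CX_{\infty }}\leq C_{2}\Vert u\Vert _{CX_{\infty }}\Vert v\Vert _{CX_{\infty }}$. Choosing $\widetilde{C}\varepsilon <1/(4C_{2})$, Lemma \ref{fix} produces the unique solution with $\Vert u\Vert _{CX_{\infty }}\leq 2\widetilde{C}\varepsilon $.
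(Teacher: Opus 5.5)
Your proposal follows the paper's proof. It applies Lemma~\ref{fix} with $\mathcal{T}=0$ (since $Bu=0$), controls the linear term by Theorem~\ref{th-cd} with the pairs $(p,p)$ and $(p,q)$, and controls the bilinear term by Theorem~\ref{th-ri}, Theorem~\ref{th-s-ric} and Lemma~\ref{holder} with the exponents $q/2$ and $\frac{1}{r}=\frac{1}{p}+\frac{1}{q}$. Your small-time computation (net power $\frac{1}{2}(1-\frac{m-\lambda}{p})\geq 0$) and the $CM_{\infty}$-bound are fine.

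\textbf{The gap.} The step that fails is the large-time bound for the $X_{\infty}$-part. The paper only asserts ``a similar estimate'' there, and your explicit argument does not establish it. You use the same $\beta$ as the decay rate of the smoothing estimate and as the weight in $X_{\infty}$, so $e^{\beta t}e^{-\beta (t-\tau)}e^{-2\beta \tau}=e^{-\beta \tau}$. This decays in $\tau$ but not in $t-\tau$, so it cannot absorb $(t-\tau)^{\frac{\lambda}{2q}}$ when $\tau$ stays bounded. Concretely, take $t\geq 2$ and $\tau \in [0,1]$; this is the piece you leave to ``the endpoints'', and the same happens for $\tau$ near $1$ in $[1,t-1]$. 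There $d(t-\tau)=d(t)=1$ and $(t-\tau)^{\frac{\lambda}{2q}}\geq (t/2)^{\frac{\lambda}{2q}}$, so the weighted integral you need to bound is at least
\begin{equation*}
c\,t^{-\frac{\lambda}{2}(\frac{1}{p}-\frac{1}{q})}\,t^{\frac{\lambda}{2q}}\int_{0}^{1}\tau^{-(m-\lambda)(\frac{1}{p}-\frac{1}{q})}e^{-\beta \tau}\,d\tau \geq c^{\prime}\,t^{\frac{\lambda}{2}(\frac{2}{q}-\frac{1}{p})}.
\end{equation*}
This is unbounded as $t\to\infty$ whenever $\lambda>0$ and $p<q<2p$. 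So $\Vert \mathcal{N}(u,v)\Vert_{X_{\infty}}\leq C\Vert u\Vert_{X_{\infty}}\Vert v\Vert_{X_{\infty}}$ does not follow from the bounds you wrote.

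\textbf{The repair.} It is easy but must be made explicitly. Let $\beta^{\prime}>0$ be the decay rate of the smoothing estimate for the pair $(q/2,q)$. Choose the weight exponent $\beta$ of $X_{\infty}$ strictly smaller than $\beta^{\prime}$, and no larger than the rate of the dispersive estimate for $u_{1}$ (which is at least $c_0$ by $(iv)$). Then $e^{\beta t}e^{-\beta^{\prime}(t-\tau)}e^{-2\beta\tau}=e^{-(\beta^{\prime}-\beta)(t-\tau)}e^{-\beta\tau}$, and the factor $(t-\tau)^{\frac{\lambda}{2q}}e^{-(\beta^{\prime}-\beta)(t-\tau)}$ is bounded for $t-\tau\geq 1$. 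The singularities $(t-\tau)^{-\frac{1}{2}-\frac{m-\lambda}{2q}}$ and $\tau^{-(m-\lambda)(\frac{1}{p}-\frac{1}{q})}$ are integrable, so the bound becomes uniform in $t$. The linear and $CM_{\infty}$-estimates are unaffected.

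\textbf{Two smaller points.} First, condition $(iv)$ of Definition~\ref{df-m} gives $-\alpha_{p_1,q_1}+k\frac{\lambda}{mq_1}\gamma_g\leq 0$ only for pairs with $p_1\geq p$. So it covers $(p,p)$ and $(p,q)$, but not $(r,p)$, nor $(q/2,q)$ when $q<2p$. For those pairs you are relying on the margin $c_0$ and the smallness of $\lambda$ behind Remark~\ref{r-sm}(i), as the paper does implicitly. Second, commuting $\mathbb{P}$ with $\overrightarrow{\Delta}+r$ is not ``equivalent'' to duality. It does not by itself turn the bound on $\nabla e^{t(\overrightarrow{\Delta}+r)}$ into one on $e^{t(\overrightarrow{\Delta}+r)}\func{div}$. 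The latter is best obtained by rerunning the proof of Theorem~\ref{th-s-ric} with the $y$-derivative bound in \eqref{es-ho}.
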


\noindent {\textbf{Proof. }}We begin by writing the mild formulation (\ref%
{mild-e}) in the form{\textbf{\ }}
\begin{equation}
u(t)=u_{1}+\mathcal{N}(u,u)(t),  \label{aux-proof-mild-3000}
\end{equation}%
where $u_{1}=e^{t(\overrightarrow{\Delta }+r)}u_{0}$ and
\begin{equation*}
\mathcal{N}(u,u)(t)=-\int_{0}^{t}e^{(t-\tau )(\overrightarrow{\Delta }+r)}%
\mathbb{P}(\func{div}(u\otimes u))(\tau )\,d\tau .
\end{equation*}

The operator $\overrightarrow{\Delta }+r$ commutes with the projection $%
\mathbb{P}$ provided that $\partial M=\varnothing .$ Therefore, after
applying the boundedness of $\mathbb{P}$ (see Theorem \ref{th-ri}), we can
use the corresponding smoothing estimates (see Theorem \ref{th-s-ric}) to
obtain
\begin{align}
\Vert \mathcal{N}(u,u)& (t)\Vert _{g,q,\lambda }\leq C\int_{0}^{t}[d(t-\tau
)]^{-\left( \frac{1}{2}+\frac{m}{2q}\right) }(t-\tau )^{\frac{\lambda }{2q}%
}e^{-\beta (t-\tau )}\Vert u\otimes u(\tau )\Vert _{g,q/2,\lambda }\,d\tau
\notag \\
& \leq C\int_{0}^{t}[d(t-\tau )]^{-\left( \frac{1}{2}+\frac{m}{2q}\right)
}(t-\tau )^{\frac{\lambda }{2q}}e^{-\beta (t-\tau )}\left( [d(\tau )^{\frac{m%
}{2}}\tau ^{\frac{\lambda }{2}}]^{\left( \frac{1}{p}-\frac{1}{q}\right)
}e^{-\beta \tau }\right) ^{2}\,d\tau \Vert u\Vert _{X_{\infty }}^{2}  \notag
\\
& \leq Ce^{-\beta t}\int_{0}^{t}[d(t-\tau )]^{-\left( \frac{1}{2}+\frac{m}{2q%
}\right) }(t-\tau )^{\frac{\lambda }{2q}}[d(\tau )^{\frac{m}{2}}\tau ^{\frac{%
\lambda }{2}}]^{2\left( \frac{1}{p}-\frac{1}{q}\right) }e^{-\beta \tau
}\,d\tau \Vert u\Vert _{X_{\infty }}^{2}.  \label{aux-proof-3000}
\end{align}%
For $0<t<1,$ using that $p\geq m-\lambda $, we can estimate the R.H.S. of (%
\ref{aux-proof-3000}) by
\begin{align*}
& Ce^{-\beta t}\left[ \int_{0}^{t}(t-\tau )^{-\left( \frac{1}{2}+\frac{m}{2q}%
\right) +\frac{\lambda }{2q}}\tau ^{-\left( \frac{m-\lambda }{p}-\frac{%
m-\lambda }{q}\right) }e^{-\beta \tau }\,d\tau \right] \Vert u\Vert
_{X_{\infty }}^{2} \\
\leq & C[d(t)^{-\frac{m}{2}}t^{\frac{\lambda }{2}}]^{\left( \frac{1}{p}-%
\frac{1}{q}\right) }e^{-\beta t}\Vert u\Vert _{X_{\infty }}^{2}.
\end{align*}%
For $t\geq 1$, since $q>p\geq m-\lambda $, we obtain a similar estimate,
leading us to
\begin{equation}
\Vert \mathcal{N}(u,u)\Vert _{X_{\infty }}\leq C\Vert u\Vert _{X_{\infty
}}^{2}.  \label{aux-proof-N-1}
\end{equation}%
Moreover, taking $\frac{1}{r}=\frac{1}{p}+\frac{1}{q}$ and using the H\"{o}%
lder's inequality (\ref{holder}), we handle the $CM_{\infty }$-component of
the norm of $CX_{\infty }$ as follows:
\begin{align}
\Vert \mathcal{N}& (u,u)(t)\Vert _{g,p,\lambda }\leq C\int_{0}^{t}[d(t-\tau
)]^{-\frac{1}{2}-\frac{m}{2}\left( \frac{1}{r}-\frac{1}{p}\right) }(t-\tau
)^{\frac{\lambda }{2}\left( \frac{1}{r}-\frac{1}{p}\right) }e^{-\beta
(t-\tau )}\Vert u\otimes u(\tau )\Vert _{g,r,\lambda }\,d\tau  \notag \\
& \leq C\int_{0}^{t}[d(t-\tau )]^{-\frac{1}{2}-\frac{m}{2}\left( \frac{1}{r}-%
\frac{1}{p}\right) }(t-\tau )^{\frac{\lambda }{2}\left( \frac{1}{r}-\frac{1}{%
p}\right) }e^{-\beta (t-\tau )}\Vert u(\tau )\Vert _{g,q,\lambda }\Vert
u(\tau )\Vert _{g,p,\lambda }\,d\tau  \notag \\
& \leq C\Vert u\Vert _{X_{\infty }}\Vert u\Vert _{CM_{\infty
}}\int_{0}^{t}[d(t-\tau )]^{-\frac{1}{2}-\frac{m}{2}\left( \frac{1}{r}-\frac{%
1}{p}\right) }(t-\tau )^{\frac{\lambda }{2}\left( \frac{1}{r}-\frac{1}{p}%
\right) }e^{-\beta (t-\tau )}[d(\tau )^{-\frac{m}{2}}\tau ^{\frac{\lambda }{2%
}}]^{\left( \frac{1}{p}-\frac{1}{q}\right) }e^{-\beta \tau }\,d\tau  \notag
\\
& \leq C\Vert u\Vert _{X_{\infty }}\Vert u\Vert _{CM_{\infty
}}\int_{0}^{t}[c_{(}t-\tau )]^{-\frac{1}{2}-\frac{m}{2}\left( \frac{1}{r}-%
\frac{1}{p}\right) }(t-\tau )^{\frac{\lambda }{2}\left( \frac{1}{r}-\frac{1}{%
p}\right) }[d(\tau )^{-\frac{m}{2}}\tau ^{\frac{\lambda }{2}}]^{\left( \frac{%
1}{p}-\frac{1}{q}\right) }e^{-\beta t}\,d\tau  \notag \\
& \leq C\Vert u\Vert _{X_{\infty }}\Vert u\Vert _{CM_{\infty
}}\int_{0}^{t}[d(t-\tau )]^{-\frac{1}{2}-\frac{m}{2}\left( \frac{1}{r}-\frac{%
1}{p}\right) }(t-\tau )^{\frac{\lambda }{2q}}[d(\tau )^{-\frac{m}{2}}\tau ^{%
\frac{\lambda }{2}}]^{\left( \frac{1}{p}-\frac{1}{q}\right) }e^{-\beta
t}\,d\tau .  \label{aux-proof-3002}
\end{align}%
For $0<t<1$, recalling that $q>p\geq m-\lambda $, the R.H.S. of (\ref%
{aux-proof-3002}) can be estimated by
\begin{align*}
& C\Vert u\Vert _{X_{\infty }}\Vert u\Vert _{CM_{\infty
}}\int_{0}^{t}(t-\tau )^{-\frac{1}{2}-\frac{m-\lambda }{2q}}\tau ^{-\frac{%
m-\lambda }{2}\left( \frac{1}{p}-\frac{1}{q}\right) }e^{-\beta t}\,d\tau \\
\leq & C\Vert u\Vert _{X_{\infty }}\Vert u\Vert _{CM_{\infty
}}\int_{0}^{t}(t-\tau )^{-\frac{1}{2}-\frac{m-\lambda }{2q}}\tau ^{-\frac{1}{%
2}+\frac{m-\lambda }{2q}}e^{-\beta t}\,d\tau \\
\leq & C\Vert u\Vert _{X_{\infty }}\Vert u\Vert _{CM_{\infty }}.
\end{align*}%
Similarly for $t\geq 1$. Thus, we arrive at%
\begin{equation}
\Vert \mathcal{N}(u,u)\Vert _{CM_{\infty }}\leq C\Vert u\Vert _{X_{\infty
}}\Vert u\Vert _{CM_{\infty }}.  \label{aux-proof-N-2}
\end{equation}%
Combining estimates (\ref{aux-proof-N-1}) and (\ref{aux-proof-N-2}) yields%
\begin{align*}
\Vert \mathcal{N}(u,u)\Vert _{CX_{\infty }}& =\Vert \mathcal{N}(u,u)\Vert
_{CM_{\infty }}+\Vert \mathcal{N}(u,u)\Vert _{X_{\infty }} \\
& \leq C\left( \Vert u\Vert _{X_{\infty }}\Vert u\Vert _{CM_{\infty }}+\Vert
u\Vert _{X_{\infty }}^{2}\right) \\
& \leq C\Vert u\Vert _{CX_{\infty }}^{2}.
\end{align*}%
Also, by Theorem \ref{th-cd}, the linear part in (\ref{aux-proof-mild-2000})
satisfies%
\begin{equation*}
\Vert u_{1}\Vert _{CX_{\infty }}=\Vert e^{t(\overrightarrow{\Delta }%
+r)}u_{0}\Vert _{CM_{\infty }}+\Vert e^{t(\nu \overrightarrow{\Delta }%
+r)}u_{0}\Vert _{X_{\infty }}\leq \widetilde{C}\left\Vert u_{0}\right\Vert
_{g,p,\lambda }.
\end{equation*}%
Therefore, considering $u_{0}\in \mathcal{M}_{p,\lambda }^{g}(\Gamma (TM))$
with $\left\Vert u_{0}\right\Vert _{g,p,\lambda }\leq \varepsilon $ for
small-enough $\varepsilon >0$, an application of Lemma \ref{fix} gives the
desired result.\fin

We end this section with a remark highlighting how the above well-posedness
results extend and relate to prior work.

\begin{rem}
\

\begin{enumerate}
\item[$(i)$] The well-posedness result of the previous theorem can be
extended to the space $\mathcal{M}_{p,\lambda }$ without requiring the
smallness of $\lambda $ or the assumption $(ii)(b)$ in Definition \ref{df-m}.

\item[$(ii)$] The well-posedness result for Einstein manifolds with negative
curvature (see Theorem \ref{th-ein}) generalizes the earlier result for
Lebesgue spaces obtained by Pierfelice \cite{Pierfelice2017} in the case of
manifolds with bounded geometry.
\end{enumerate}
\end{rem}

\section{Results on Ricci-flat manifolds}

\label{s7}

For a manifold satisfying $0<\func{Ric}\leq C$, one cannot in general
guarantee either exponential or polynomial decay of the semigroup $%
e^{t(\Delta _{H}+2r)}$ and or its gradient $\nabla e^{t(\Delta _{H}+2r)}.$
However, in the modified case with viscosity treated in Section \ref{s.visc}%
, assuming that the Ricci curvature is positive and bounded allows one to
obtain suitable decay estimates.

In a complementary spirit, in this section we restrict our attention to
Ricci-flat manifolds, that is, manifolds satisfying $\func{Ric}=0$. Notable
instances, previously highlighted in the introduction, include simply
connected Ricci-flat manifolds with bounded geometry or large volume growth,
such as the ALE (asymptotically locally Euclidean) Ricci-flat (see, for
example, \cite{NAKAJIMA1990385}). Additionally, one can consider product
manifolds of the form $\mathbb{R}^{m}\times C$, where $C$ is a simply
connected compact Ricci-flat manifold, encompassing in particular the simply
connected compact Calabi-Yau manifolds $C$.

In this context, the Navier-Stokes equations \eqref{ns1} take the form
\begin{equation}
\begin{cases}
\partial _{t}u+\nabla _{u}u+\dfrac{1}{\rho }\func{grad}\,p=\nu (\Delta
_{H}u^{\flat })^{\sharp }, \\
\func{div}(u)=0, \\
u(0,\cdot )=u_{0},%
\end{cases}
\label{NS-0}
\end{equation}%
and the corresponding mild formulation is given by
\begin{equation}
u(t)=e^{t\nu \Delta _{H}}u_{0}-\int_{0}^{t}e^{(t-\tau )\nu \Delta _{H}}%
\mathbb{P}(\func{div}(u\otimes u))(\tau )\,d\tau .  \label{mild-NS-0}
\end{equation}

Although we do not assume that the injectivity radius is infinite, the
completeness of $M$ ensures that $M=\cup _{R>0}B_{z}(R)$, for any $z\in M$.
Hence, for every integrable function $f$ on $M$, we have the inequality
\begin{equation*}
\int_{M}f\,dV\leq \int_{S_{z}}\int_{0}^{\infty }f(\exp
_{z}(tv))J(t,v)\,dt\,dv\text{, for all }z\in M.
\end{equation*}%
By Bishop's results (see \cite[Theorem III.4.3]{chavel}), if $\func{Ric}\geq
0$, then%
\begin{equation*}
J(t,v)\leq Ct^{m-1}.
\end{equation*}%
Consequently, for a radial function $\phi $, we can write the pushforward
(in polar-coordinate representation)%
\begin{equation*}
\int_{M}\phi (r(x,y))f(y)\,dy\leq \int_{0}^{\infty }\phi
(r)\int_{S_{x}}f(\exp _{x}(rv))J(r,v)\,dv\,dr.
\end{equation*}

By the results of Li-Yau \cite{yau}, if $M$ is a complete Riemannian
manifold without boundary and $\func{Ric}\geq 0$, then the heat kernel $%
G(t,x,y)$ satisfies
\begin{equation*}
G(t,x,y)\leq C_{1}|B_{x}(\sqrt{t})|^{-1}e^{-C_{2}\frac{r(x,y)^{2}}{t}}
\end{equation*}%
and
\begin{equation*}
\nabla _{x}G(t,x,y)\leq C_{1}t^{-\frac{1}{2}}|B_{x}(\sqrt{t})|^{-1}e^{-C_{2}%
\frac{r(x,y)^{2}}{t}}.
\end{equation*}%
Furthermore, by a result of Grigor'yan \cite{Grigoryan2014},
\begin{equation*}
\partial _{t}G(t,x,y)\leq C_{1}t^{-1}|B_{x}(\sqrt{t})|^{-1}e^{-C_{2}\frac{%
r(x,y)^{2}}{t}}.
\end{equation*}%
Assuming $\func{Ric}\geq 0$ and $|B_{x}(R)|\geq CR^{m}$, for $R>0$, then the
previous estimates imply the global Gaussian bounds
\begin{align}
G(t,x,y)& \leq C_{1}t^{-\frac{m}{2}}e^{-C_{2}\frac{r(x,y)^{2}}{t}};
\label{eq-r0} \\
\nabla _{x}G(t,x,y)& \leq C_{1}t^{-\frac{m+1}{2}}e^{-C_{2}\frac{r(x,y)^{2}}{t%
}};  \notag \\
\partial _{t}G(t,x,y)& \leq C_{1}t^{-\frac{m}{2}-1}e^{-C_{2}\frac{r(x,y)^{2}%
}{t}}.  \label{eq-t0}
\end{align}

As before, the first step in obtaining the decay estimates is to bound the
heat semigroup $e^{t\Delta _{g}}:\mathcal{M}_{p,\lambda }^{g}\rightarrow
L^{\infty }$ and $e^{t\Delta _{g}}:\mathcal{M}_{p,\lambda }\rightarrow
L^{\infty }$. Recall that the following pointwise inequality holds (see \cite%
[Lemme 2.3]{Lohou}, \cite{Devyyer2014}):
\begin{equation*}
|e^{t\Delta _{H}}u^{\flat }|\leq e^{t\Delta _{g}}|u|.
\end{equation*}

With these preliminary observations in hand, we can now derive the following
dispersive estimates.

\begin{thr}[Dispersive: $\mathcal{M}_{p,\protect\lambda }^{g}\rightarrow
L^{\infty }$]
\label{th-est-ric-zero-1}Let $(M,g)$ be a complete, simply connected,
non-compact, $m$-dimensional Riemannian manifold without boundary. Let $p\in
\lbrack 1,\infty )$ and $\lambda \in \lbrack 0,m)$ and denote by $u_{0}$ the
initial data for the heat equation.

\begin{enumerate}
\item[$(ii)$] Suppose that $M$ satisfies $\func{Ric}\geq 0$ and has large
volume growth. If $u_{0}\in \mathcal{M}_{p,\lambda }^{g}(\Gamma (TM))$ then
\begin{equation*}
\Vert e^{t\Delta _{g}}u_{0}\Vert _{\infty }\leq Ct^{-\frac{m-\lambda }{2p}%
}\Vert u_{0}\Vert _{g,p,\lambda },\text{ }
\end{equation*}%
for all $t>0,$ where $C>0$ is a constant.

\item[$(ii)$] Suppose that $M$ satisfies $\func{Ric}\geq 0$ and has bounded
geometry. If $u_{0}\in \mathcal{M}_{p,\lambda }(\Gamma (TM))$ then
\begin{equation*}
\Vert e^{t\Delta _{g}}u_{0}\Vert _{\infty }\leq Ct^{-\frac{m-\lambda }{2p}%
}\Vert u_{0}\Vert _{p,\lambda },
\end{equation*}%
for all $t>0,$ where $C>0$ is a constant.
\end{enumerate}
\end{thr}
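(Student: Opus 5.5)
The proof follows the pushforward argument of Theorem \ref{th-mi}, now with the global Gaussian bound \eqref{eq-r0} in place of \eqref{es-h4}. Write $u=e^{t\Delta _{g}}u_{0}$ and reduce to scalar data $|u_{0}|$. Jensen/H\"{o}lder applies because $\int_{M}G(t,x,y)\,dy\leq 1$. Together with \eqref{eq-r0} it gives
\begin{equation*}
|u(t,x)|^{p}\leq \int_{M}G(t,x,y)|u_{0}(y)|^{p}\,dy\leq C_{1}t^{-\frac{m}{2}}\int_{0}^{\infty }e^{-C_{2}\frac{r^{2}}{t}}\,d\rho _{x}(r),
\end{equation*}
where $\rho _{x}(r)=\int_{B_{x}(r)}|u_{0}|^{p}$. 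Item $(i)$ uses large volume growth together with $\func{Ric}\geq 0$. Item $(ii)$ uses bounded geometry plus $\func{Ric}\geq 0$; here I would either use the Li--Yau form $|B_{x}(\sqrt{t})|^{-1}$ directly, or assume the Euclidean-type lower volume bound that makes \eqref{eq-r0} available.

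Next integrate by parts in $r$. The boundary terms vanish: at $r=0$ because $\rho _{x}(0)=0$, and at infinity because of the Gaussian factor, since $\rho _{x}(r)$ grows at most polynomially. This leaves $\int_{0}^{\infty }\frac{2C_{2}r}{t}e^{-C_{2}r^{2}/t}\rho _{x}(r)\,dr$. In case $(i)$, Bishop comparison for $\func{Ric}\geq 0$ gives $|B_{x}(r)|\leq Cr^{m}$, hence
\begin{equation*}
\rho _{x}(r)\leq |B_{x}(r)|^{\frac{\lambda }{m}}\Vert u_{0}\Vert _{g,p,\lambda }^{p}\leq Cr^{\lambda }\Vert u_{0}\Vert _{g,p,\lambda }^{p}.
\end{equation*}
In case $(ii)$ the bound $\rho _{x}(r)\leq r^{\lambda }\Vert u_{0}\Vert _{p,\lambda }^{p}$ holds by definition. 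In both cases the substitution $r=\sqrt{t}s$ yields
\begin{equation*}
\int_{0}^{\infty }\frac{r^{\lambda +1}}{t}e^{-C_{2}r^{2}/t}\,dr=t^{\frac{\lambda }{2}}\int_{0}^{\infty }s^{\lambda +1}e^{-C_{2}s^{2}}\,ds=Ct^{\frac{\lambda }{2}}.
\end{equation*}
Hence $|u(t,x)|^{p}\leq Ct^{-\frac{m-\lambda }{2}}\Vert u_{0}\Vert ^{p}$. Taking $p$-th roots and the supremum over $x$ gives the claim for all $t>0$. No splitting into $t\leq 1$ and $t>1$ is needed, since the bound \eqref{eq-r0} is scale invariant.

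The main obstacle is justifying the global bound $G\leq Ct^{-m/2}e^{-C_{2}r^{2}/t}$ for all $t>0$. This requires $|B_{x}(\sqrt{t})|\geq ct^{m/2}$ uniformly in $x$, i.e.\ large volume growth. In item $(i)$ this is assumed, so everything is fine. In item $(ii)$, bounded geometry only gives the lower volume bound for small radii. If the Euclidean growth cannot be extracted, I would keep $|B_{x}(\sqrt{t})|^{-1}$ and note that the stated rate needs $|B_{x}(\sqrt{t})|\gtrsim t^{m/2}$. For large $t$ I would invoke the framework's standing hypothesis from Figure~\ref{fig2}, namely Ricci-flat with large volume growth or bounded geometry as in the ALE examples. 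I would flag this as the point where the hypotheses must be checked carefully.
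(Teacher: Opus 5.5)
Your argument for item $(i)$ is the paper's proof. It uses Jensen/H\"older on the Green representation, the Gaussian bound \eqref{eq-r0}, the pushforward $\rho_x(r)$, integration by parts, the bound $\rho_x(r)\le Cr^{\lambda}\Vert u_0\Vert_{g,p,\lambda}^p$, and the scaling $r=\sqrt{t}\,s$. You also make explicit the Bishop bound $|B_x(r)|\le Cr^m$ that the paper uses tacitly. That part is complete.

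For item $(ii)$ your hesitation is justified, and the defect is in the statement, not in your argument. The paper dismisses $(ii)$ as ``treated similarly'' and again invokes \eqref{eq-r0}. But \eqref{eq-r0} was derived only under $|B_x(R)|\ge cR^m$ for all $R>0$, which bounded geometry supplies only for small $R$. Without that hypothesis the claim is false.

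Here is a counterexample. Take $M=\mathbb{R}^{m_1}\times S^{m_2}$ with $m_1\ge 1$, $m_2\ge 2$; it is complete, simply connected, has $\func{Ric}\ge 0$ and bounded geometry. Take $p=1$, $\lambda=0$ (so $\mathcal{M}_{1,0}=L^1$) and scalar data $u_0=\chi_{B_{x_0}(1)}$. The heat kernel factorizes, and its spherical factor tends to $|S^{m_2}|^{-1}$. Hence $e^{t\Delta_g}u_0(x_0)\ge c\,t^{-m_1/2}\Vert u_0\Vert_{1,0}$ for large $t$, which contradicts the claimed bound $Ct^{-m/2}\Vert u_0\Vert_{1,0}$. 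Remark~\ref{Rem-th-w0-1} even advertises such product manifolds.

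Moreover, the Li--Yau lower bound gives $G(t,x,x)\ge c\,|B_x(\sqrt{t})|^{-1}$. So the case $p=1$, $\lambda=0$ of $(ii)$ is equivalent to $\inf_x|B_x(\sqrt{t})|\ge c\,t^{m/2}$ for all $t>0$: large volume growth is necessary, not a convenience.

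So drop the hedge via Figure~\ref{fig2} and the ALE examples. Item $(ii)$ deliberately omits large volume growth for $\mathcal{M}_{p,\lambda}$, and that omission is exactly the error. State $(ii)$ with the added hypothesis $|B_x(R)|\ge cR^m$ for all $x\in M$, $R>0$. Under it your proof, with $\rho_x(r)\le r^{\lambda}\Vert u_0\Vert_{p,\lambda}^p$, is complete. With bounded geometry alone, your Li--Yau route honestly yields only
$\Vert e^{t\Delta_g}u_0\Vert_\infty\le C\sup_x|B_x(\sqrt{t})|^{-1/p}\,t^{\lambda/(2p)}\Vert u_0\Vert_{p,\lambda}$.
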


\noindent {\textbf{Proof.}} It suffices to prove the heat semigroup estimate
for $u_{0}\in \mathcal{M}_{p,\lambda }^{g}(\Gamma (TM))$, since the other
case (item $(ii)$) can be treated similarly. Applying H\"{o}lder's
inequality to the Green representation formula \eqref{green}, we obtain
\begin{equation*}
|u(t,x)|^{p}\leq \int_{M}G(t,x,y)|u_{0}(y)|^{p}\,dy
\end{equation*}%
where $u(t)=e^{t\Delta _{g}}u_{0}$. Since $M$ is complete, we can use the
heat kernel estimate \eqref{eq-r0} and together with the polar coordinate
representation (pushforward measure) to deduce
\begin{align*}
|u(t,x)|^{p}& \leq C_{1}\int_{M}t^{-\frac{m}{2}}e^{-C_{2}\frac{r(x,y)^{2}}{t}%
}|u_{0}(y)|^{p}\,dy \\
& \leq C_{1}\int_{0}^{\infty }t^{-\frac{m}{2}}e^{-C_{2}\frac{r^{2}}{t}%
}\,d\rho (r),
\end{align*}%
where $\rho (r)=\int_{B_{x}(r)}|u_{0}(y)|^{p}\,dy$. Note that, by
integration by parts, we can further handle this expression as follows:
\begin{align*}
|u(t,x)|^{p}& \leq C_{1}\int_{0}^{\infty }t^{-\frac{m}{2}}e^{-C_{2}\frac{%
r^{2}}{t}}\,d\rho (r) \\
& =C_{1}t^{-\frac{m}{2}}\left[ e^{-C_{2}\frac{r^{2}}{t}}\rho (r)\bigg|%
_{0}^{\infty }+C\int_{0}^{\infty }\dfrac{r}{t}e^{-C_{2}\frac{r^{2}}{t}}\rho
(r)\,dr\right] \\
& \leq Ct^{-\frac{m}{2}}\Vert u_{0}\Vert _{g,p,\lambda }^{p}\int_{0}^{\infty
}\dfrac{r^{\lambda +1}}{t}e^{-C_{2}\frac{r^{2}}{t}}\,dr \\
& \leq Ct^{-\frac{m-\lambda }{2}}\Vert u_{0}\Vert _{g,p,\lambda }^{p},
\end{align*}%
which yields the estimate stated in item $(i)$.\fin

In what follows, we derive estimates for the semigroup $e^{t\Delta _{g}}$
acting from $\mathcal{M}_{p,\lambda }^{g}$ to $\mathcal{M}_{q,\lambda }^{g}$
and from $\mathcal{M}_{p,\lambda }$ to $\mathcal{M}_{q,\lambda }$. Note that
the polynomial decay arises from the Euclidean volume growth.

\begin{thr}[Dispersive: $\mathcal{M}_{p,\protect\lambda }^{g}\rightarrow
\mathcal{M}_{q,\protect\lambda }^{g}$]
\label{th-m0} Let $(M,g)$ be a complete, simply connected, non-compact, $m$%
-dimensional Riemannian manifold without boundary. Let $1\leq p\leq q<\infty
$ and $\lambda \in \lbrack 0,m)$ and denote by $u_{0}$ the initial data for
the heat equation.

\begin{enumerate}
\item[$(i)$] Suppose that $M$ satisfies $\func{Ric}\geq 0$ and has large
volume growth. If $u_{0}\in \mathcal{M}_{p,\lambda }^{g}(\Gamma (TM))$ then
\begin{equation*}
\Vert e^{t\Delta _{g}}u_{0}\Vert _{g,q,\lambda }\leq Ct^{-\frac{m-\lambda }{2%
}\left( \frac{1}{p}-\frac{1}{q}\right) }\Vert u_{0}\Vert _{g,p,\lambda },
\end{equation*}%
for all $t>0,$ where $C>0$ is a constant.

\item[$(ii)$] Suppose that $M$ satisfies $\func{Ric}\geq 0$ and has bounded
geometry. If $u_{0}\in \mathcal{M}_{p,\lambda }(\Gamma (TM))$ then
\begin{equation*}
\Vert e^{t\Delta _{g}}u_{0}\Vert _{g,q,\lambda }\leq Ct^{-\frac{m-\lambda }{2%
}\left( \frac{1}{p}-\frac{1}{q}\right) }\Vert u_{0}\Vert _{p,\lambda },
\end{equation*}%
for all $t>0,$ where $C>0$ is a constant.
\end{enumerate}
\end{thr}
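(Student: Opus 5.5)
Following the structure used for Theorem \ref{th-d}, the estimate will come from interpolating an $L^\infty$ bound against a $\mathcal{M}_{p,\lambda}^{g}\to\mathcal{M}_{p,\lambda}^{g}$ bound. We treat item $(i)$ in detail; item $(ii)$ is identical after replacing $|B_{x_0}(R)|^{\lambda/m}$ by $R^\lambda$ and \eqref{eq-r0} by the bounded geometry heat kernel bound. The basic tool is the interpolation inequality \eqref{eq-pq},
\begin{equation*}
\Vert u\Vert _{g,q,\lambda }\leq \Vert u\Vert _{\infty }^{1-\frac{p}{q}}\Vert u\Vert _{g,p,\lambda }^{\frac{p}{q}}.
\end{equation*}
Theorem \ref{th-est-ric-zero-1} already gives $\Vert e^{t\Delta _{g}}u_{0}\Vert _{\infty }\leq Ct^{-\frac{m-\lambda }{2p}}\Vert u_{0}\Vert _{g,p,\lambda }$. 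Inserting this into the inequality gives the exponent $-\frac{m-\lambda}{2p}(1-\frac pq)=-\frac{m-\lambda}{2}(\frac1p-\frac1q)$. Hence the whole statement reduces to the uniform bound $\Vert e^{t\Delta _{g}}u_{0}\Vert _{g,p,\lambda }\leq C\Vert u_{0}\Vert _{g,p,\lambda }$ for all $t>0$.

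To prove this uniform bound we fix a ball $B_{x_0}(R)$ and apply Hölder's inequality against the probability-type kernel: since $\int_M G(t,x,y)\,dy\le 1$, we get $|u(t,x)|^p\le \int_M G(t,x,y)|u_0(y)|^p\,dy$. We then split the $y$-integral into $B_{x_0}(2R)$ and its complement.

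The near part is handled with Tonelli and the bound $\int_M G(t,x,y)\,dx\le1$. This gives
\begin{equation*}
\int_{B_{x_0}(2R)}|u_0|^p\le |B_{x_0}(2R)|^{\lambda/m}\Vert u_0\Vert^p_{g,p,\lambda}\le C|B_{x_0}(R)|^{\lambda/m}\Vert u_0\Vert^p_{g,p,\lambda}.
\end{equation*}
The last step uses the doubling property. Doubling holds because $\mathrm{Ric}\ge0$ (Bishop–Gromov), and together with large volume growth it gives $|B_x(R)|\simeq R^m$.

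For the far part, $r(x,y)\ge r(x_0,y)/2$ for $x\in B_{x_0}(R)$. By \eqref{eq-r0} the kernel is therefore bounded by $Ct^{-m/2}e^{-c\,r(x_0,y)^2/t}$, so we can use the pushforward measure centred at $x_0$ and integrate by parts exactly as in Theorem \ref{th-est-ric-zero-1}. With $\rho(r)=\int_{B_{x_0}(r)}|u_0|^p\le Cr^\lambda\Vert u_0\Vert^p$, this yields
\begin{equation*}
\int_{B_{x_0}(R)}\int_{M\setminus B_{x_0}(2R)}\cdots\le C R^m t^{-m/2}\int_{2R}^\infty \frac{r}{t}\,r^{\lambda}e^{-cr^2/t}\,dr\,\Vert u_0\Vert^p.
\end{equation*}

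The main obstacle is making this far-part bound uniform in both $R$ and $t$. When $R^2\le t$, we bound the integral by $Ct^{\lambda/2}$, so the whole term is $\lesssim R^m t^{(\lambda-m)/2}\le R^\lambda$. When $R^2>t$, we substitute $s=r/\sqrt t$; the factor $e^{-cs^2}$ with $s\ge 2R/\sqrt t$ then absorbs $(R/\sqrt t)^{m-\lambda}$, and again the term is $\lesssim R^\lambda$. In both regimes the contribution is $\le CR^\lambda\simeq C|B_{x_0}(R)|^{\lambda/m}$. Combining the near and far parts gives the $p\to p$ bound, and interpolation finishes the proof.
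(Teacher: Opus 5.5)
Your proof of item $(i)$ is correct, and its skeleton is the paper's. You reduce to $p=q$ via Theorem \ref{th-est-ric-zero-1} and \eqref{eq-pq}, use $|u|^p\le\int_M G|u_0|^p$, split the $y$-integral at $B_{x_0}(2R)$, and bound the near part with $\int_M G(t,x,y)\,dx\le 1$ plus doubling.

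The far part is where you genuinely differ.
\begin{itemize}
\item The paper inserts the weights $r(x,x_0)^{-k}r(x,y)^{k}$ (with $k=m-\lambda$, as in Theorem \ref{th-mm}), enlarges $M\setminus B_{x_0}(2R)$ to $M\setminus B_x(R)$, and uses the pushforward at the moving center $x$. The moment $\int_0^\infty t^{-1}r^{m+1}e^{-cr^2/t}\,dr\simeq t^{m/2}$ then cancels the kernel's $t^{-m/2}$ exactly, so no case distinction in $R/\sqrt{t}$ is needed. The factor $R^\lambda$ comes from $\int_{B_{x_0}(R)}r(x,x_0)^{-(m-\lambda)}\,dx\lesssim R^\lambda$.
\item You instead recenter the Gaussian at $x_0$ through $r(x,y)\ge r(x_0,y)/2$, integrate out $x$ trivially with $|B_{x_0}(R)|\lesssim R^m$, and recover $R^\lambda$ from the two regimes $R^2\le t$ and $R^2>t$.
\end{itemize}
Your version needs only one pushforward at a fixed center and no singular weight. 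It also works verbatim for every $\lambda\in[0,m)$, since only $m-\lambda>0$ is used, whereas $r(\cdot,x_0)^{-(m-\lambda)}$ is not integrable near $x_0$ when $\lambda=0$. The cost is the short scaling computation at the end.

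Item $(ii)$ has a genuine gap. You say it is identical once \eqref{eq-r0} is replaced by the bounded-geometry kernel bound; the paper offers no more than this, but the step fails. You need $G(t,x,y)\le Ct^{-m/2}e^{-cr^2/t}$ for all $t>0$. The paper's \eqref{es-ho} is stated on $(0,\infty)$, but bounded geometry only yields such a bound for bounded $t$. Under $\func{Ric}\ge 0$, Li--Yau gives $G(t,x,x)\simeq|B_x(\sqrt t)|^{-1}$, which is much larger than $t^{-m/2}$ for large $t$ unless $M$ has large volume growth.

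In fact the estimate itself fails for $p<q$, as does the $L^\infty$ bound of Theorem \ref{th-est-ric-zero-1}$(ii)$ that your reduction would invoke. Take $M=\mathbb{R}^2\times S^2$, which has $m=4$, $\func{Ric}\ge 0$, bounded geometry and is simply connected. The constant $u_0\equiv 1$ lies in $\mathcal{M}_{p,\lambda}$ for $\lambda\in[2,4)$, but $e^{t\Delta_g}1=1$ does not decay. The Ricci-flat products $\mathbb{R}^{m_1}\times C$ with $C$ compact, advertised in the introduction, behave the same way.

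Note also that the left-hand side of $(ii)$ must be read as $\Vert\cdot\Vert_{q,\lambda}$, as you implicitly do, since $\Vert 1\Vert_{g,q,\lambda}=\infty$ there. So your argument proves $(ii)$ only under the extra hypothesis of large volume growth. In that case $\mathcal{M}_{p,\lambda}$ and $\mathcal{M}^g_{p,\lambda}$ have equivalent norms and $(ii)$ reduces to $(i)$.
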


\noindent {\textbf{Proof.}} In view of Theorem \ref{th-est-ric-zero-1} and
the interpolation property \eqref{eq-pq} in the case $\func{Ric}=0,$ it
suffices to prove the statement for $p=q$. To this end, applying H\"{o}%
lder's inequality to the Green representation formula \eqref{green}, we
obtain
\begin{equation*}
|u(t,x)|^{p}\leq \int_{M}G(t,x,y)|u_{0}(y)|^{p}\,dy,
\end{equation*}%
where $u(t)=e^{t\Delta _{g}}u_{0}$. Next, using the kernel estimate, we
deduce
\begin{align*}
\int_{B_{x_{0}}(R)}& |u(t,x)|^{p}\,dx\leq
\int_{B_{x_{0}}(R)}\int_{M}G(t,x,y)|u_{0}(y)|^{p}\,dy\,dx \\
& \leq
\int_{B_{x_{0}}(R)}\int_{B_{x_{0}}(2R)}G(t,x,y)|u_{0}(y)|^{p}\,dy\,dx+%
\int_{B_{x_{0}}(R)}\int_{M\smallsetminus
B_{x_{0}}(2R)}G(t,x,y)|u_{0}(y)|^{p}\,dy\,dx \\
& \leq
\int_{B_{x_{0}}(2R)}|u_{0}(y)|^{p}\,dy+C\int_{B_{x_{0}}(R)}r(x,x_{0})^{-k}%
\int_{M\smallsetminus B_{x_{0}}(2R)}r(x,y)^{k}G(t,x,y)|u_{0}(y)|^{p}\,dy\,dx
\\
& \leq B_{x_{0}}(2R)^{\frac{\lambda }{m}}\Vert u_{0}\Vert _{g,p,\lambda
}^{p}+C\int_{B_{x_{0}}(R)}r(x,x_{0})^{-k}\int_{M\smallsetminus
B_{x}(R)}r(x,y)^{k}G(t,x,y)|u_{0}(y)|^{p}\,dy\,dx \\
& \leq CB_{x_{0}}(R)^{\frac{\lambda }{m}}\Vert u_{0}\Vert _{g,p,\lambda
}^{p}+C\int_{B_{x_{0}}(R)}r(x,x_{0})^{-k}\int_{M\smallsetminus
B_{x}(R)}r(x,y)^{k}t^{-\frac{m}{2}}e^{-C_{2}\frac{r(x,y)^{2}}{t}%
}|u_{0}(y)|^{p}\,dy\,dx.
\end{align*}%
Now, proceeding analogously to the proof of Theorem \ref{th-mm}, we arrive
at
\begin{equation*}
\Vert u\Vert _{g,p,\lambda }\leq C\Vert u_{0}\Vert _{g,p,\lambda }.
\end{equation*}%
\fin

By a slight modification of the argument used in the above proof, one can
similarly estimate $\nabla e^{t\Delta _{H}}$.

\begin{thr}[Smoothing: $\mathcal{M}_{p,\protect\lambda }^{g}\rightarrow M_{q,%
\protect\lambda }^{g}$]
\label{th-s0} Let $(M,g)$ be a complete, simply connected, non-compact, $m$%
-dimensional Riemannian manifold without boundary. Let $1\leq p\leq q<\infty
$ and $\lambda \in \lbrack 0,m)$ and denote by $u_{0}$ the initial data for
the heat equation.

\begin{enumerate}
\item[$(i)$] Suppose that $M$ has nonnegative Ricci curvature ($\func{Ric}%
\geq 0$), bounded geometry, and large volume growth. If $u_{0}\in \mathcal{M}%
_{p,\lambda }^{g}(\Gamma (TM))$ then
\begin{equation*}
\Vert \nabla e^{t\Delta _{H}}u_{0}^{\flat }\Vert _{g,q,\lambda }\leq Ct^{-%
\frac{1}{2}-\frac{m-\lambda }{2}\left( \frac{1}{p}-\frac{1}{q}\right) }\Vert
u_{0}\Vert _{g,p,\lambda },
\end{equation*}%
for all $t>0,$ where $C>0$ is a constant.

\item[$(ii)$] Assume that $M$ is Ricci-flat and has bounded geometry. If $%
u_{0}\in \mathcal{M}_{p,\lambda }(\Gamma (TM))$ then
\begin{equation*}
\Vert \nabla e^{t\Delta _{H}}u_{0}^{\flat }\Vert _{g,q,\lambda }\leq Ct^{-%
\frac{1}{2}-\frac{m-\lambda }{2}\left( \frac{1}{p}-\frac{1}{q}\right) }\Vert
u_{0}\Vert _{p,\lambda },
\end{equation*}%
for all $t>0,$ where $C>0$ is a constant.
\end{enumerate}
\end{thr}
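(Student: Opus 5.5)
We prove the estimate first for $p=q$ and then deduce the case $p<q$ from the semigroup property combined with Theorem \ref{th-m0}. The kernel we need is that of $e^{t\Delta _{H}}$ acting on $1$-forms. On a Ricci-flat manifold the Weitzenb\"{o}ck identity \eqref{weit} gives $\overrightarrow{\Delta }=\Delta _{H}$. Together with bounded geometry, \eqref{es-ho} then yields the global Gaussian bound
\begin{equation*}
|\nabla _{x}H(t,x,y)|\leq C_{1}t^{-\frac{m}{2}-\frac{1}{2}}e^{-C_{2}\frac{r(x,y)^{2}}{t}},\qquad t>0 .
\end{equation*}
This is the analogue of the Li--Yau gradient bound recorded above under large volume growth. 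Writing $\nabla e^{t\Delta _{H}}u_{0}^{\flat }=\int_{M}\nabla _{x}H(t,x,y)\wedge \ast u_{0}^{\flat }(y)\,dy$, we get the pointwise bound
\begin{equation*}
|\nabla e^{t\Delta _{H}}u_{0}^{\flat }(x)|\leq Ct^{-\frac{1}{2}}\int_{M}t^{-\frac{m}{2}}e^{-C_{2}\frac{r(x,y)^{2}}{t}}|u_{0}(y)|\,dy .
\end{equation*}
The right-hand side is $t^{-1/2}$ times an operator with exactly the Gaussian kernel of \eqref{eq-r0}.

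Step 1 ($p=q$): we repeat the proof of Theorem \ref{th-m0} with this kernel. Since $\int_{M}t^{-m/2}e^{-C_{2}r^{2}/t}\,dy\leq C$ (polar coordinates with $J(t,v)\leq Ct^{m-1}$), H\"{o}lder's inequality gives $|\nabla e^{t\Delta _{H}}u_{0}^{\flat }|^{p}\leq Ct^{-p/2}\int t^{-m/2}e^{-C_{2}r^{2}/t}|u_{0}|^{p}\,dy$. We integrate over $B_{x_{0}}(R)$ and split $y\in B_{x_{0}}(2R)$ versus $y\notin B_{x_{0}}(2R)$. The near part is bounded by $|B_{x_{0}}(2R)|^{\lambda /m}\Vert u_{0}\Vert _{g,p,\lambda }^{p}$, which is at most $C|B_{x_{0}}(R)|^{\lambda /m}\Vert u_{0}\Vert _{g,p,\lambda }^{p}$ because, for $\func{Ric}\geq 0$ with large volume growth, $|B(R)|\sim R^{m}$ is doubling. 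For the far part we insert the weights $r(x,x_{0})^{-(m-\lambda )}r(x,y)^{m-\lambda }$, pass to the pushforward measure $\rho _{x}(r)$, integrate by parts, and use $\rho _{x}(r)\leq C r^{\lambda }\Vert u_{0}\Vert _{g,p,\lambda }^{p}$. These are exactly the steps of Theorems \ref{th-mm} and \ref{th-m0}, now without exponential volume factors since $|B_{x}(r)|\leq Cr^{m}$. The result is $\Vert \nabla e^{t\Delta _{H}}u_{0}^{\flat }\Vert _{g,p,\lambda }\leq Ct^{-1/2}\Vert u_{0}\Vert _{g,p,\lambda }$ for all $t>0$, with no need to separate $t\leq 1$ from $t>1$.

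Step 2 ($p<q$): we write $\nabla e^{t\Delta _{H}}u_{0}^{\flat }=\nabla e^{\frac{t}{2}\Delta _{H}}(e^{\frac{t}{2}\Delta _{H}}u_{0}^{\flat })$ and apply Step 1 in $\mathcal{M}_{q,\lambda }^{g}$. This reduces the problem to $\Vert e^{\frac{t}{2}\Delta _{H}}u_{0}^{\flat }\Vert _{g,q,\lambda }\leq Ct^{-\frac{m-\lambda }{2}(\frac{1}{p}-\frac{1}{q})}\Vert u_{0}\Vert _{g,p,\lambda }$. That bound follows from the domination $|e^{t\Delta _{H}}u^{\flat }|\leq e^{t\Delta _{g}}|u|$ of \cite{Lohou} together with Theorem \ref{th-m0}(i), noting that Morrey norms depend only on $|u|$. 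Multiplying the factors $t^{-1/2}$ and $t^{-\frac{m-\lambda }{2}(\frac{1}{p}-\frac{1}{q})}$ gives item (i). Item (ii) follows the same route with $\mathcal{M}_{p,\lambda }$: Step 1 uses $R^{\lambda }$ in place of $|B|^{\lambda /m}$, and Step 2 invokes Theorem \ref{th-m0}(ii).

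The main obstacle is Step 1, specifically justifying a global-in-time Gaussian gradient bound for the Hodge heat kernel with the sharp $t^{-(m+1)/2}$ rate. Estimate \eqref{es-ho} from \cite{Buttig} must be read as valid for all $t>0$; if it only holds for bounded $t$, we would argue as in Theorem \ref{es-s}, using the $t\leq 1$ bound plus the semigroup property. That route loses the $t^{-1/2}$ decay for large $t$, so in that case we would instead appeal to the Li--Yau gradient bound for $\nabla _{x}G$. In the Ricci-flat case this transfers to $1$-forms because $\Delta _{H}=\overrightarrow{\Delta }$ and Kato's inequality gives $|\nabla e^{t\Delta _{H}}\omega |$ control via the scalar kernel. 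A secondary technical point is the far-region bookkeeping in the integration by parts: we must check that $\int_{0}^{\infty }\frac{r^{m+1}}{t}e^{-C_{2}r^{2}/t}\,dr\cdot t^{-m/2}$ is bounded uniformly in $t$, which holds by scaling.
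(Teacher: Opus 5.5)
Your proposal is correct and follows the paper's proof essentially step for step. For $p=q$ you use the Buttig--Eichhorn bound \eqref{es-ho} (read, as the paper does, for all $t>0$), H\"{o}lder's inequality, the splitting into $B_{x_{0}}(2R)$ and its complement with distance weights, and integration by parts against $\rho_{x}$ as in Theorem \ref{th-mm}; for $p<q$ you use the semigroup property, the domination $|e^{t\Delta_{H}}u^{\flat}|\leq e^{t\Delta_{g}}|u|$, and Theorem \ref{th-m0}.

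You should drop your fallback remark, which is not needed and would not work as stated. Kato's inequality $|\nabla|\omega||\leq|\nabla\omega|$ runs the wrong way to bound $|\nabla e^{t\Delta_{H}}\omega|$ by scalar-kernel quantities. Also, item $(i)$ assumes only $\func{Ric}\geq 0$, so $\Delta_{H}\neq\overrightarrow{\Delta}$ in general.
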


\noindent {\textbf{Proof.}} Again, it suffices to consider the case $%
u_{0}\in \mathcal{M}_{p,\lambda }^{g}(\Gamma (TM))$ with $p=q$. By estimate %
\eqref{es-ho}, we have
\begin{equation*}
|\nabla e^{t\Delta _{H}}u_{0}^{\flat }(t,x)|\leq C_{1}\int_{M}t^{-\frac{1+m}{%
2}}e^{-C_{2}\frac{r(x,y)^{2}}{t}}|u_{0}(y)|\,dy,
\end{equation*}%
which, combined with H\"{o}lder's inequality, yields
\begin{align*}
|\nabla e^{t\Delta _{H}}u_{0}^{\flat }(t,x)|^{p}& \leq C\left[
\int_{M}\left( t^{-\frac{m}{2}}e^{-C_{2}\frac{r(x,y)^{2}}{t}}\right) ^{\frac{%
1}{p}+\frac{1}{p^{\prime }}}t^{-\frac{1}{2}}u_{0}(y)\,dy\right] ^{p} \\
& \leq C\left[ \int_{M}t^{-\frac{m}{2}}e^{-C_{2}\frac{r(x,y)^{2}}{t}}t^{-%
\frac{p}{2}}|u_{0}(y)|^{p}\,dy\right] \left[ \int_{M}t^{-\frac{m}{2}%
}e^{-C_{2}\frac{r(x,y)^{2}}{t}}\,dy\right] ^{\frac{p}{p^{\prime }}} \\
& \leq Ct^{-\frac{p+m}{2}}\int_{M}e^{-C_{2}\frac{r(x,y)^{2}}{t}%
}|u_{0}(y)|^{p}\,dy.
\end{align*}%
Integrating over the ball $B_{x_{0}}(R)$, we obtain
\begin{align*}
& \int_{B_{x_{0}}(R)}|\nabla e^{t\Delta _{H}}u_{0}^{\flat
}(t,x)|^{p}\,dx\leq Ct^{-\frac{p+m}{2}}\int_{B_{x_{0}}(R)}\int_{M}e^{-C_{2}%
\frac{r(x,y)^{2}}{t}}|u_{0}(y)|^{p}\,dy\,dx \\
& \leq Ct^{-\frac{p+m}{2}}\left(
\int_{B_{x_{0}}(R)}\int_{B_{x_{0}}(2R)}e^{-C_{2}\frac{r(x,y)^{2}}{t}%
}|u_{0}(y)|^{p}\,dy\,dx+\int_{B_{x_{0}}(R)}\int_{M\smallsetminus
B_{x_{0}}(2R)}e^{-C_{2}\frac{r(x,y)^{2}}{t}}|u_{0}(y)|^{p}\,dy\,dx\right) \\
& \leq Ct^{-\frac{p+m}{2}}\left( t^{\frac{m}{2}}|B_{x_{0}}(R)|^{\frac{%
\lambda }{m}}\Vert u_{0}\Vert _{g,p,\lambda
}^{p}+\int_{B_{x_{0}}(R)}r(x,x_{0})^{-k}\int_{M\smallsetminus
B_{x_{0}}(2R)}r(x,y)^{k}e^{-C_{2}\frac{r(x,y)^{2}}{t}}|u_{0}(y)|^{p}\,dy\,dx%
\right) \\
& \leq Ct^{-\frac{p+m}{2}}\left( t^{\frac{m}{2}}|B_{x_{0}}(R)|^{\frac{%
\lambda }{m}}\Vert u_{0}\Vert _{g,p,\lambda
}^{p}+\int_{B_{x_{0}}(R)}r(x,x_{0})^{-k}\int_{M\smallsetminus
B_{x}(R)}r(x,y)^{k}e^{-C_{2}\frac{r(x,y)^{2}}{t}}|u_{0}(y)|^{p}\,dy\,dx%
\right) .
\end{align*}%
Now, we can proceed as in the proof of Theorem \ref{th-mm} to conclude that
\begin{equation*}
\Vert \nabla e^{t\Delta _{H}}u_{0}^{\flat }(t,x)\Vert _{g,p,\lambda }\leq
Ct^{-\frac{1}{2}}\Vert u_{0}\Vert _{g,p,\lambda }.
\end{equation*}%
\fin

In the setting of Ricci-flat manifolds, the boundedness of the Riesz
transform follows as a special case of more general results, in analogy with
the Euclidean case $\mathbb{R}^{m}$ (see \cite{Kato1992}).

\begin{thr}
\label{th-r0} Let $(M,g)$ be a complete, simply connected, non-compact, $m$%
-dimensional Riemannian manifold without boundary. Let $S:M\times
M\rightarrow \Gamma (TM)$ satisfy
\begin{equation*}
|S(x,y)|\leq Cr(x,y)^{-m},
\end{equation*}%
where $r(x,y)$ denotes the Riemannian distance between $x$ and $y.$ Define
the operator $T:L^{p}(M)\rightarrow L^{p}(\Gamma (TM))$ by%
\begin{equation*}
Tf(x)=\int_{M}S(x,y)f(y)\,dy,
\end{equation*}%
and assume that $T$ is bounded. Under these assumptions, we have:

\begin{enumerate}
\item[$(i)$] If $M$ is Ricci-flat with large volume growth, then $T:\mathcal{%
M}_{p,\lambda }^{g}(M)\rightarrow \mathcal{M}_{p,\lambda }^{g}(\Gamma (TM))$
is bounded;

\item[$(ii)$] If $M$ is Ricci-flat with bounded geometry, then $T:\mathcal{M}%
_{p,\lambda }(M)\rightarrow \mathcal{M}_{p,\lambda }(\Gamma (TM))$ is
bounded.
\end{enumerate}
\end{thr}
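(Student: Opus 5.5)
We follow the classical Euclidean argument of Kato for singular integrals on Morrey spaces: split $f$ into a local part, controlled by the assumed $L^{p}$-boundedness of $T$, and a far part, controlled by the size bound $|S(x,y)|\leq Cr(x,y)^{-m}$. In both items the geometry enters only through two-sided polynomial volume bounds. Ricci-flatness gives $|B_{x}(R)|\leq CR^{m}$ by Bishop comparison. Large volume growth gives $|B_{x}(R)|\geq cR^{m}$ (in item $(ii)$ bounded geometry plays this role, at least up to the scales needed). Hence $\mathcal{M}_{p,\lambda}^{g}$ and $\mathcal{M}_{p,\lambda}$ have comparable norms, and it suffices to show that for every ball $B_{x_{0}}(R)$,
\begin{equation*}
\int_{B_{x_{0}}(R)}|Tf|^{p}\leq CR^{\lambda}\Vert f\Vert_{g,p,\lambda}^{p}.
\end{equation*}

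\textbf{Local part.} Write $f=f_{1}+f_{2}$ with $f_{1}=f\chi_{B_{x_{0}}(2R)}$. By the assumed $L^{p}$ boundedness and the volume bounds,
\begin{equation*}
\Vert Tf_{1}\Vert_{L^{p}(B_{x_{0}}(R))}\leq C\Vert f\chi_{B_{x_{0}}(2R)}\Vert_{p}\leq C|B_{x_{0}}(2R)|^{\frac{\lambda}{mp}}\Vert f\Vert_{g,p,\lambda}\leq CR^{\frac{\lambda}{p}}\Vert f\Vert_{g,p,\lambda}.
\end{equation*}

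\textbf{Far part.} For $x\in B_{x_{0}}(R)$ and $y\notin B_{x_{0}}(2R)$ the triangle inequality gives $r(x,y)\geq\frac{1}{2}r(x_{0},y)$, so
\begin{equation*}
|Tf_{2}(x)|\leq C\int_{r(x_{0},y)\geq 2R}r(x_{0},y)^{-m}|f(y)|\,dy,
\end{equation*}
a bound uniform in $x$. Decompose into dyadic annuli $A_{j}=B_{x_{0}}(2^{j+1}R)\setminus B_{x_{0}}(2^{j}R)$, $j\geq1$. On each annulus apply H\"older and the Morrey bound:
\begin{equation*}
\int_{A_{j}}r^{-m}|f|\leq(2^{j}R)^{-m}|B_{x_{0}}(2^{j+1}R)|^{1-\frac{1}{p}}|B_{x_{0}}(2^{j+1}R)|^{\frac{\lambda}{mp}}\Vert f\Vert_{g,p,\lambda}\leq C(2^{j}R)^{-\frac{m-\lambda}{p}}\Vert f\Vert_{g,p,\lambda}.
\end{equation*}
Since $\lambda<m$, the sum over $j$ converges geometrically to $CR^{-\frac{m-\lambda}{p}}\Vert f\Vert_{g,p,\lambda}$. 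Integrating its $p$-th power over $B_{x_{0}}(R)$ gives $CR^{m}R^{-(m-\lambda)}=CR^{\lambda}$, as required. This is the same mechanism as the $R\leq1$ estimate for $I_{1}$ in Theorem \ref{th-ri}, now valid at all scales because the volume is polynomial.

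\textbf{Expected obstacle.} The main difficulty is justifying the two-sided volume comparison $|B_{x}(R)|\simeq R^{m}$ uniformly in $x$ and $R$, and hence the equivalence of the $\mathcal{M}^{g}$ and $\mathcal{M}$ norms. In item $(i)$ this follows from Bishop's theorem together with large volume growth. In item $(ii)$ bounded geometry only gives a lower bound $|B_{x}(R)|\geq cR^{m}$ for $R\leq r_{inj}$. For large $R$ the computation above needs only the upper bound $|B|\leq CR^{m}$, which holds by Bishop's theorem since the space $\mathcal{M}_{p,\lambda}$ is normalized by $R^{m}$. I would therefore run item $(ii)$ directly with the normalization $R^{\lambda}$, avoiding any lower volume bound. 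A secondary technical point is that $Tf$ for $f$ in a Morrey space must be defined as $Tf_{1}+Tf_{2}$, with the far integral absolutely convergent. The annulus estimate above shows this, and the definition is independent of the ball chosen by the usual consistency argument.
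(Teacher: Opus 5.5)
Your proof is correct. It uses the same near/far mechanism as the paper, but with a different decomposition.

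The paper truncates the kernel, $S=S_\rho+(S-S_\rho)$. It bounds the far piece $g''$ pointwise by $C\rho^{-\frac{m-\lambda}{p}}\Vert f\Vert_{g,p,\lambda}$ using a single weighted H\"older inequality (exponents $\frac{\alpha}{p}+\frac{\beta}{p'}=m$, $\alpha>\lambda$, $\beta>m$) rather than your dyadic annuli, and then takes $\rho=R$. For the near piece it writes $\Vert\int S_\rho(\cdot,y)\bar f(y)\,dy\Vert_p\le\Vert T\bar f\Vert_p$ with $\bar f=f\chi_{B_{x_0}(R+\rho)}$. This step does not follow directly from the hypothesis: the truncated operator differs from $T$ by $\int (S-S_\rho)(\cdot,y)\bar f(y)\,dy$, which has to be bounded separately, for instance by the same far-field estimate.

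Cutting off the function ($f_1=f\chi_{B_{x_0}(2R)}$) rather than the kernel avoids this. You apply the $L^p$-boundedness of $T$ exactly as assumed, and the far integral is automatically non-singular. Two of your remarks make explicit what the paper uses silently:
\begin{itemize}
\item $Tf$ must be defined for Morrey data, via $Tf_1+Tf_2$.
\item Item $(ii)$ needs only the Bishop upper bound $|B_x(R)|\le CR^m$ coming from $\mathrm{Ric}\ge 0$. Large volume growth enters item $(i)$ only to convert $R^{\lambda}$ into $|B_{x_0}(R)|^{\lambda/m}$.
\end{itemize}

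You should drop your opening claim that bounded geometry supplies the lower volume bound in item $(ii)$. As you note later, it only does so for $R\le r_{inj}$, and your final argument does not need it.
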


\noindent {\textbf{Proof.}} We prove the result for $u_{0}\in \mathcal{M}%
_{p,\lambda }^{g}(M)$, as the case in item $(ii)$ follows by a similar
argument. For each $\rho >0$, define $S_{\rho }$ by
\begin{equation*}
S_{\rho }(x,y)=%
\begin{cases}
S(x,y), & \text{if}\hspace{0.3cm}r(x,y)<\rho , \\
0, & \text{if}\hspace{0.3cm}r(x,y)\geq \rho .%
\end{cases}%
\end{equation*}%
For $f\in \mathcal{M}_{p,\lambda }^{g}$, set
\begin{equation*}
g^{\prime }(x)=\int_{M}S_{\rho }(x,y)f(y)\,dy\text{ }\ \ \text{and \ }%
g^{\prime \prime }(x)=\int_{M}(S-S_{\rho })(x,y)f(y)\,dy.
\end{equation*}

First, we will prove that $g^{\prime \prime }\in L^{\infty }$. Let $%
p^{\prime }$ be the conjugate exponent of $p.$ Choose $\alpha ,\beta >0$
such that
\begin{equation*}
\dfrac{\alpha }{p}+\dfrac{\beta }{p^{\prime }}=m,\,\text{\ \ }\alpha
>\lambda ,\text{ and }\beta >m.
\end{equation*}%
Then, we can estimate
\begin{align*}
|g^{\prime \prime }(x)|& =\bigg|\int_{M}(S-S_{\rho })(x,y)f(y)\,dy\bigg| \\
& \leq \int_{M}|(S-S_{\rho })(x,y)||f(y)|\,dy \\
& \leq C\int_{r(x,y)\geq \rho }r(x,y)^{-\left( \frac{\alpha }{p}+\frac{\beta
}{p^{\prime }}\right) }|f(y)|\,dy \\
& \leq C\left( \int_{r(x,y)\geq \rho }r(x,y)^{-\beta }\,dy\right) ^{\frac{1}{%
p^{\prime }}}\left( \int_{r(x,y)\geq \rho }r(x,y)^{-\alpha
}|f(y)|^{p}\,dy\right) ^{\frac{1}{p}} \\
& \leq C\rho ^{\frac{m-\beta }{p^{\prime }}}\rho ^{\frac{\lambda -\alpha }{p}%
}\Vert f\Vert _{g,p,\lambda }=C\rho ^{-\frac{m-\lambda }{p}}\Vert f\Vert
_{g,p,\lambda }.
\end{align*}%
Thus, we obtain that $g^{\prime \prime }\in L^{\infty }$ and
\begin{equation*}
\left( \int_{B_{x_{0}}(R)}|g^{\prime \prime }(y)|^{p}\,dy\right) ^{\frac{1}{p%
}}\leq \Vert g^{\prime \prime }\Vert _{\infty }R^{\frac{m}{p}}\leq CR^{\frac{%
m}{p}}\rho ^{-\frac{m-\lambda }{p}}\Vert f\Vert _{g,p,\lambda }.
\end{equation*}%
For the term $g^{\prime }$, we have
\begin{align*}
\left( \int_{B_{x_{0}}(R)}|g^{\prime }(x)|^{p}\,dx\right) ^{\frac{1}{p}}&
=\left( \int_{B_{x_{0}}(R)}\left\vert \int_{M}S_{\rho }(x,y)\overline{f}%
(y)\,dy\right\vert ^{p}\,dx\right) ^{\frac{1}{p}} \\
& \leq \left\Vert \int_{M}S_{\rho }(x,y)\overline{f}(y)\,dy\right\Vert _{p},
\end{align*}%
where
\begin{equation*}
\overline{f}(x)=%
\begin{cases}
f(x),\, & \text{if}\hspace{0.3cm}x\in B(x_{0},R+\rho ), \\
0,\, & \text{if}\hspace{0.3cm}x\notin B(x_{0},R+\rho ).%
\end{cases}%
\end{equation*}%
Using the hypothesis that $T:L^{p}(M)\rightarrow L^{p}(M)$ is bounded, we
obtain
\begin{align*}
\left( \int_{B_{x_{0}}(R)}|g^{\prime }(x)|^{p}\,dx\right) ^{\frac{1}{p}}&
\leq \Vert T\overline{f}\Vert _{p} \\
& \leq C\Vert \overline{f}\Vert _{p} \\
& \leq C\left( \int_{B_{x_{0}}(R+\rho )}|f(x)|^{p}\,dx\right) ^{\frac{1}{p}}
\\
& \leq C|B_{x_{0}}(R+\rho )|^{\frac{\lambda }{mp}}\Vert f\Vert _{g,p,\lambda
} \\
& \leq C(R+\rho )^{\frac{\lambda }{p}}\Vert f\Vert _{g,p,\lambda }.
\end{align*}%
Since $Tf=g^{\prime }+g^{\prime \prime }$, it follows that%
\begin{equation*}
\left( \int_{B_{x_{0}}(R)}|Tf(x)|^{p}\,dx\right) ^{\frac{1}{p}}\leq C\left[
R^{\frac{m}{p}}\rho ^{-\frac{m-\lambda }{p}}+(R+\rho )^{\frac{\lambda }{p}}%
\right] \Vert f\Vert _{g,p,\lambda }.
\end{equation*}%
Choosing $\rho =R$, we finally arrive at
\begin{equation*}
\left( \int_{B_{x_{0}}(R)}|Tf(x)|^{p}\,dx\right) ^{\frac{1}{p}}\leq CR^{%
\frac{\lambda }{p}}\Vert f\Vert _{g,p,\lambda }.
\end{equation*}%
\fin

\begin{rem}
\label{Riesz-Lp-bounded-1000}\ Note that the Riesz transform is a special
case of Theorem \ref{th-r0}, since when the Ricci curvature is non-negative,
it is known to be $L^{p}$-bounded; see \cite{Bakry1987}, \cite%
{CoulhonDuong2003}.
\end{rem}

Let $q\in \lbrack 1,\infty )$ and $\lambda \in \lbrack 0,m)$ satisfy $%
m-\lambda \leq q$. We can look for mild solutions of system (\ref{NS-0}) in
the functional space $CX_{\infty }^{g}=CM_{\infty }^{g}\cap X_{\infty }^{g}$%
, where $CM_{\infty }^{g}=\mathcal{C}_{b}((0,{\infty }),\mathcal{M}%
_{p,\lambda }^{g}(\Gamma (TM)))$ and
\begin{equation*}
X_{\infty }^{g}=\bigg\{u\in L_{loc}^{\infty }((0,{\infty }),\mathcal{M}%
_{q,\lambda }^{g}(\Gamma (TM)))\,\bigg|\,t^{\frac{1}{2}-\frac{m-\lambda }{2q}%
}\Vert u(t)\Vert _{g,q,\lambda }\in L^{\infty }(0,{\infty })\bigg\}.
\end{equation*}%
The space $CX_{\infty }^{g}$ is equipped with the norm $\Vert \cdot \Vert
_{CX_{\infty }^{g}}=\Vert u\Vert _{CM_{\infty }^{g}}+\Vert u\Vert
_{X_{\infty }^{g}}$, where
\begin{equation*}
\Vert u\Vert _{CM_{\infty }^{g}}=\sup_{0<t<{\infty }}\Vert u(t)\Vert
_{g,q,\lambda }\text{ \ and \ }\Vert u\Vert _{X_{\infty }^{g}}=\sup_{0<t<{%
\infty }}t^{\frac{1}{2}-\frac{m-\lambda }{2q}}\Vert u(t)\Vert _{g,q,\lambda
}.
\end{equation*}%
Similarly,we define the spaces $CX_{\infty },$ $CM_{\infty }$, and $%
X_{\infty }$ by replacing $\mathcal{M}_{p,\lambda }^{g}$ and $\Vert \cdot
\Vert _{g,p,\lambda }$ by $\mathcal{M}_{p,\lambda }$ and $\Vert \cdot \Vert
_{p,\lambda },$ respectively.

We are now in a position to state our well-posedness result in the
Ricci-flat setting.

\begin{thr}[Well-posedness - Ricci-flat Manifolds]
\label{th-w0}Let $(M,g)$ be a complete, simply connected, non-compact, $m$%
-dimensional Riemannian manifold without boundary. Suppose further that $M$
is Ricci-flat.

\begin{enumerate}
\item[$(i)$] Assume that $M$ has bounded geometry and large volume growth,
and let $u_{0}\in \mathcal{M}_{p,\lambda }^{g}(\Gamma (TM))$ with $%
p=m-\lambda $ and $\func{div}(u_{0})=0.$ Then, the Navier-Stokes system (\ref%
{NS-0}) admits a unique mild solution $u\in CX_{\infty }^{g}$ satisfying $%
\Vert u\Vert _{CX_{\infty }^{g}}\leq 2\widetilde{C}\varepsilon $, for some
constant $\widetilde{C}>0$. Moreover, $u\rightharpoonup u_{0}$ in the sense
of distributions on $M$ as $t\rightarrow 0^{+}$.

\item[$(ii)$] Assume that $M$ has bounded geometry, and let $u_{0}\in
\mathcal{M}_{p,\lambda }(\Gamma (TM))$ with $p=m-\lambda $ and $\func{div}%
(u_{0})=0.$ There exists $\varepsilon >0$ and $\widetilde{C}>0$ such that if
$\Vert u_{0}\Vert _{p,\lambda }\leq \varepsilon $, then the Navier-Stokes
system (\ref{NS-0}) admits a unique mild solution $u\in CX_{\infty }$
satisfying $\Vert u\Vert _{CX_{\infty }}\leq 2\widetilde{C}\varepsilon .$
Moreover, $u\rightharpoonup u_{0}$ in the sense of distributions on $M$ as $%
t\rightarrow 0^{+}$.
\end{enumerate}
\end{thr}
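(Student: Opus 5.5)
We rewrite the mild formulation (\ref{mild-NS-0}) as $u=u_{1}+\mathcal{N}(u,u)$, with $u_{1}=e^{t\nu \Delta _{H}}u_{0}$ and $\mathcal{N}(u,u)(t)=-\int_{0}^{t}e^{(t-\tau )\nu \Delta _{H}}\mathbb{P}\func{div}(u\otimes u)(\tau )\,d\tau$. We then apply Lemma \ref{fix} with $\mathcal{T}=0$ in $CX_{\infty }^{g}$ (respectively $CX_{\infty }$). For item $(i)$ we also read the smallness hypothesis $\Vert u_{0}\Vert _{g,p,\lambda }\leq \varepsilon$ into the statement. Rescaling time, we may take $\nu =1$. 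Since $p=m-\lambda$ is the critical index, every estimate should be scale invariant with pure power weights in $t$. There is no exponential decay here and no smallness condition on $\lambda$ is needed.

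\textbf{Linear part.} By the pointwise domination $|e^{t\Delta _{H}}u_{0}^{\flat }|\leq e^{t\Delta _{g}}|u_{0}|$ together with Theorem \ref{th-m0}, we get $\Vert u_{1}(t)\Vert _{g,p,\lambda }\leq C\Vert u_{0}\Vert _{g,p,\lambda }$ and
\[
\Vert u_{1}(t)\Vert _{g,q,\lambda }\leq Ct^{-\frac{m-\lambda }{2}\left( \frac{1}{p}-\frac{1}{q}\right) }\Vert u_{0}\Vert _{g,p,\lambda }.
\]
Because $(m-\lambda )/p=1$, the exponent equals $-\frac{1}{2}+\frac{m-\lambda }{2q}$, which matches the weight in $X_{\infty }^{g}$. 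Hence $\Vert u_{1}\Vert _{CX_{\infty }^{g}}\leq \widetilde{C}\Vert u_{0}\Vert _{g,p,\lambda }$. We fix some $q$ with $p<q<2p$ (so that $m-\lambda <q<2(m-\lambda )$).

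\textbf{Bilinear part.} On $M$ without boundary, $\Delta _{H}$ commutes with $\mathbb{P}$ and $\mathbb{P}$ is bounded. Boundedness follows from Theorem \ref{th-r0} combined with Remark \ref{Riesz-Lp-bounded-1000}: the Riesz kernel satisfies $|S(x,y)|\lesssim r^{-m}$ by integrating the gradient bound for $G$ over $t$ with the Gaussian estimates (\ref{eq-r0}). We move the divergence onto the semigroup by duality and use Theorem \ref{th-s0} in the form $\Vert e^{t\Delta _{H}}\mathbb{P}\func{div}F\Vert _{g,q,\lambda }\leq Ct^{-\frac{1}{2}-\frac{m-\lambda }{2}\left( \frac{2}{q}-\frac{1}{q}\right) }\Vert F\Vert _{g,q/2,\lambda }$. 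By H\"{o}lder's inequality (Lemma \ref{holder}), $\Vert u\otimes u\Vert _{g,q/2,\lambda }\leq \Vert u\Vert _{g,q,\lambda }^{2}$. This gives
\[
\Vert \mathcal{N}(u,u)(t)\Vert _{g,q,\lambda }\leq C\Vert u\Vert _{X_{\infty }^{g}}^{2}\int_{0}^{t}(t-\tau )^{-\frac{1}{2}-\frac{m-\lambda }{2q}}\tau ^{-1+\frac{m-\lambda }{q}}\,d\tau .
\]
The integral is a Beta function times $t^{-\frac{1}{2}+\frac{m-\lambda }{2q}}$, and it converges exactly because $m-\lambda <q$. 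For the $CM$ component we argue in the same way with $\frac{1}{r}=\frac{1}{p}+\frac{1}{q}$ and the estimate $\mathcal{M}_{r,\lambda }^{g}\to \mathcal{M}_{p,\lambda }^{g}$. This yields the kernel $(t-\tau )^{-\frac{1}{2}-\frac{m-\lambda }{2q}}\tau ^{-\frac{1}{2}+\frac{m-\lambda }{2q}}$, whose integral is a constant independent of $t$. Therefore $\Vert \mathcal{N}(u,u)\Vert _{CX_{\infty }^{g}}\leq C\Vert u\Vert _{CX_{\infty }^{g}}^{2}$, and Lemma \ref{fix} produces the unique solution with $\Vert u\Vert \leq 2\widetilde{C}\varepsilon$. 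Weak convergence to $u_{0}$ follows from the standard argument cited in Theorem \ref{th-local}. Item $(ii)$ is identical, using the $\mathcal{M}_{p,\lambda }$ versions of Theorems \ref{th-m0}, \ref{th-s0} and \ref{th-r0}.

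\textbf{Main obstacle.} The hard step is the smoothing estimate for $e^{t\Delta _{H}}\mathbb{P}\func{div}$ between different Morrey indices ($q/2\to q$ and $r\to p$). Theorem \ref{th-s0} is stated for $\nabla e^{t\Delta _{H}}$, not for its adjoint composed with $\func{div}$. I would handle this by writing the kernel of $e^{t\Delta _{H}}\func{div}$ as $-\nabla _{y}H(t,x,y)$, using the symmetric gradient bound (\ref{es-ho}) with $j=1$, and repeating the proof of Theorem \ref{th-s0} at $p=q$. I would then combine this with Theorems \ref{th-est-ric-zero-1} and \ref{th-m0} through the splitting $e^{t\Delta _{H}}=e^{\frac{t}{2}\Delta _{H}}e^{\frac{t}{2}\Delta _{H}}$ and the interpolation (\ref{eq-pq}). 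Commuting $\mathbb{P}$ past the semigroup is needed only once, since Theorem \ref{th-r0} supplies the $\mathcal{M}^{g}$-boundedness of $\mathbb{P}$.
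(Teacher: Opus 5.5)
Your proposal follows the same route as the paper, whose proof of this theorem is only a one-line appeal to Theorems \ref{th-m0}, \ref{th-s0}, \ref{th-r0} and a contraction argument via Lemma \ref{fix}. You also correctly supply details the paper omits: the smallness hypothesis $\Vert u_0\Vert_{g,p,\lambda}\le\varepsilon$ missing from item $(i)$, and the adaptation of Theorem \ref{th-s0} to $e^{t\Delta_H}\func{div}$ through the kernel $\nabla_y H$.

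The one slip is the restriction $q<2p$, which is unnecessary and sometimes harmful. Your Beta integrals need only $q>m-\lambda$. Lemma \ref{holder} with $\frac1r=\frac1p+\frac1q$ needs $r\ge 1$, i.e.\ $q\ge p'$, and this cannot hold together with $q<2p$ when $1<p\le\frac32$. Simply take $q>\max\{p,p'\}$ instead; this also guarantees $q/2>1$.
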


\textbf{Proof. }The result follows from the estimates established in
Theorems \ref{th-m0}, \ref{th-s0} and \ref{th-r0}, combined with a
contraction mapping argument similar to those employed in the proofs of the
results in previous section. We omit the details for brevity, leaving them
to the reader.\fin

\begin{rem}
\label{Rem-th-w0-1} In the results of this section, the assumption that the
manifold is simply connected can be replaced by the weaker requirement that
it be orientable, due to a classical geometric result (see, e.g., \cite[%
Theorem 15.43]{Lee2013}), which ensures the necessary integration
properties. Moreover, the results can be adapted to compact geometries with
the dimension $m\geq 2$. These considerations allow us to cover additional
examples such as the cylinder $\mathbb{R}\times \mathbb{T}_{flat}^{m_{2}}$ ($%
m_{2}\geq 2$), the products $\mathbb{R}^{m_{1}}\times \mathbb{T}%
_{flat}^{m_{2}}$ ($m_{1}\geq 2,$ $m_{2}\geq 1$), and the flat-torus $\mathbb{%
T}_{flat}^{m}$ ($m\geq 2$), among others.
\end{rem}

\noindent

\paragraph*{\textbf{Acknowledgments.}}

V. Chaves-Santos was supported by CAPES (Finance Code $001$), Brazil. L. C.
F. Ferreira was supported by CNPq (grant 312484/2023-2), Brazil.

\noindent

\paragraph*{\textbf{Conflict of Interest Statement.}}

The authors declare that there are no conflicts of interest to disclose.

\noindent

\paragraph*{\textbf{Data availability statement.}}

This manuscript has no associated data.

\end{document}